\def\Ex{{\mathbb E}}
\def\Pr{{\mathbb P}}
\def\ind{\mathbbm{1}}
\def\er{{\mathbb R}}
\def\ve{\varepsilon}
\def\cale{{\cal E}}
\def\sgn{\mathrm{sgn}}
\def\supp{\mathrm{supp}}
\def\Cov{\mathrm{Cov}}
\def\Med{\mathrm{Med}}
\def\dim{\mathrm{dim}}
\def\vol{\mathrm{vol}}
\def\de{\mathrm{d}}
\newtheorem{thm}{Theorem}[section]
\newtheorem{lem}[thm]{Lemma}
\newtheorem{prop}[thm]{Proposition}
\newtheorem{conj}[thm]{Conjecture}
\newtheorem{cor}[thm]{Corollary}
\theoremstyle{definition}
\newtheorem{exm}[thm]{Example}
\newtheorem{dfn}[thm]{Definition}
\theoremstyle{remark}
\newtheorem{rem}[thm]{Remark}
\title{Sudakov-type minoration for log-concave vectors
\thanks{Research supported by the NCN grant DEC-2012/05/B/ST1/00412\newline
AMS 2010 Classification: Primary 60E15; Secondary 46B09, 52A23\newline
Keywords: log-concave random vectors, covering numbers, chaining methods}}
\author{Rafa{\l} Lata{\l}a}
\date{}
\begin{document}

\maketitle

\begin{abstract}
We formulate and discuss a conjecture concerning lower bounds for norms of log-concave vectors, which
generalizes the classical Sudakov minoration principle for Gaussian vectors. We show that the conjecture holds for some
special classes of log-concave measures and some weaker forms of it are satisfied in the general case. We also present some
applications based on chaining techniques.  
\end{abstract}

\section{Introduction and formulation of the problem}

In numerous problems arising in high dimensional probability one needs to estimate $\Ex\|X\|$, where $X$ is a random $d$-dimensional vector and
$\|\ \|$ is a norm on $\er^d$. Obviously $\|x\|=\sup_{\|t\|_*\leq 1}\langle t,x\rangle$, so the question reduces to
finding bounds for $\Ex\sup_{t\in T}\langle t,X\rangle$ with $T\subset\er^d$. For symmetric random vectors this quantity is 
a half of $\Ex\sup_{t,s\in T}\langle t-s,X\rangle$, however in the case of arbitary (not necessary centered) random vectors
it is more convienient to work with the latter quantity.

There are numerous powerful methods to estimate suprema of stochastic processes 
(cf. the monograph \cite{TaBook}),
let us however present only a very easy upper bound. Namely for any $p\geq 1$,
\begin{align*}
\Ex\sup_{t,s\in T}\langle t-s,X\rangle&=\Ex\sup_{t,s\in T}|\langle t-s,X\rangle|\leq 
 \Big(\Ex\sup_{t,s\in T}|\langle t-s,X\rangle|^p\Big)^{1/p}
\\
&\leq \Big(\Ex\sum_{t,s\in T}|\langle t-s,X\rangle|^p\Big)^{1/p}
\leq |T|^{2/p}\sup_{t,s\in T}\|\langle t-s,X\rangle\|_p.
\end{align*}
Here and in the sequel $\|Y\|_p:=(\Ex|Y|^p)^{1/p}$ for a real random variable $Y$ and $p>0$. In particular
\[
\Ex\sup_{t,s\in T}\langle t-s,X\rangle\leq e^2 \sup_{t,s\in T}\|\langle t-s,X\rangle\|_p \quad \mbox{ if }|T|\leq e^p.
\]

It is natural to ask when the above estimate may be reversed. 
Namely, when is it true that if the set $T\subset \er^d$
has large cardinality (say at least $e^p$) and variables $(\langle t,X\rangle)_{t\in T}$ are $A$-separated with respect to 
the $L_p$-distance then $\Ex\sup_{t,s\in T}\langle t,X\rangle$ is at least of the order of $A$? The following definition
gives a more precise formulation of such property.

\begin{dfn}
\label{defSM}
Let $X$ be a random $d$-dimensional vector. We say that $X$ 
\emph{satisfies the $L_p$-Sudakov minoration principle with a constant $\kappa>0$} ($\mathrm{SMP}_{p}(\kappa)$ in short) 
if for any set $T\subset \er^d$ with $|T|\geq e^p$ such that
\begin{equation}
\label{sudass}
\|\langle t-s,X\rangle\|_p=\Big(\Ex\Big(\sum_{i=1}^d(t_i-s_i)X_i\Big)^p\Big)^{1/p}\geq A
\quad \mbox{for all } s,t\in T,\ s\neq t,
\end{equation}
we have
\begin{equation}
\label{sudconcl}
\Ex \sup_{t,s\in T}\langle t-s,X\rangle=\Ex \sup_{t,s\in T}\sum_{i=1}^d (t_i-s_i)X_i
\geq \kappa A.
\end{equation}
A random vector $X$ \emph{satisfies the Sudakov minoration principle with a constant $\kappa$} ($\mathrm{SMP}(\kappa)$ in short)
if it satisfies $\mathrm{SMP}_{p}(\kappa)$ for any $p\geq 1$.
\end{dfn}

\begin{rem}
One cannot hope to improve the estimate \eqref{sudconcl} even if $X$ has a regular product distribution 
and $|T|$ is very
large with respect to $p$. To see this take $X$ uniformly distributed on the cube $[-1,1]^d$,
then for $p\geq 1$, $\|X_i-X_j\|_{p}\geq \|X_i-X_j\|_1=2/3$ for all $1\leq i<j\leq d$ and
$\Ex\sup_{i,j\leq n}(X_i-X_j)\leq 2$.
\end{rem}

\begin{exm} 
If $X$ has the canonical  $d$-dimensional Gaussian distribution then 
$\|\langle t,X\rangle\|_p=\gamma_p|t|$, where $\gamma_p=\|{\cal N}(0,1)\|_p\sim \sqrt{p}$ for $p\geq 1$.
Hence condition \eqref{sudass} is equivalent to $|t-s|\geq A/\gamma_p$ for distinct vectors $t,s\in T$ and the
classical Sudakov minoration principle for Gaussian processes, cf. \cite{Su} and \cite[Theorem 3.18]{LT},
then yields
\[
\Ex  \sup_{t,s\in T}\langle t-s,X\rangle=
2\Ex \sup_{t\in T}\langle t,X\rangle\geq \frac{A}{C\gamma_p}\sqrt{\log |T|}\geq \frac{A}{C'}
\]
provided that $|T|\geq e^{p}$ ($C$ and $C'$ denote universal constants). Therefore $X$ satisfies  the
Sudakov minoration principle with a universal constant. In fact it is not hard to see that
for centered Gaussian vectors the Sudakov minoration principle in the sense of Definition \ref{defSM} is
formally equivalent to the minoration property established by Sudakov.  
\end{exm}

\begin{exm}
\label{smprad}
If $X_i$'s are independent symmetric $\pm 1$ r.v.'s (equivalently one may
consider the vector $X$ uniformly distributed on the cube $[-1,1]^d$) then
condition \eqref{sudass} means, by the result of Hitczenko \cite{H}, that 
$t-s\notin \frac{A}{C}(B_1^n+\sqrt{p}B_2^n)$ and
in this case $\mathrm{SMP}(\kappa)$ with universal $\kappa$ was proven by Talagrand  \cite{Ta1}. 
\end{exm}

\begin{exm}
\label{indlogcon}
In the more general case when coordinates of $X$ are independent and symmetric 
with log-concave densities (or just log-concave tails) the Sudakov minoration priciple with
a universal constant was proven in 
\cite{Ta2} (for random variables with the density $\exp(-|x|^p)$, $p\geq 1$) and \cite{La}. 
\end{exm}

The Sudakov minoration principle for vectors $X$ with independent coordinates  is investigated in \cite{LaT},
where it is shown that $\mathrm{SMP}$ is essentially equivalent to the regular
growth of moments of coordinates of $X$. In this paper we will concentrate on the class of log-concave vectors.  

\medskip

A measure $\mu$ on $\er^n$ is called \emph{logarithmically concave} (or \emph{log-concave} in short) if
$\mu(\lambda K+(1-\lambda)L)\geq \mu(K)^{\lambda}\mu(L)^{1-\lambda}$ for any nonempty compact
sets $K,L$ and $\lambda\in [0,1]$. By the result of Borell \cite{Bo} a measure on $\er^n$ with the full-dimensional
support is log-concave if and only if it has a log-concave density, i.e. the density of the form
$e^{-h(x)}$, where $h\colon \er^n\rightarrow (-\infty,\infty]$ is convex. 
A random vector is called log-concave if its distribution is logarithmically concave. A typical example of a 
log-concave vector is a vector uniformly distributed on a convex body.

It is quite easy to reduce investigation of the Sudakov minoration principle to the case of symmetric vectors, 
see Lemma \ref{red_to_symm} below. 
Since SMP is preserved under linear transformations (Lemma \ref{preslin}) 
we may additionally assume that the vector $X$ is \emph{isotropic}, i.e. $\Cov(X_i,X_j)=\delta_{i,j}$ for all $i,j$.
In many aspects isotropic log-concave probability measures behave like product measures, cf. \cite{BGVV}. This motivates
the following conjecture.

\begin{conj}
\label{Sudmin}
Every $d$-dimensional log-concave random vector satisfies the Sudakov-minoration principle with 
a universal constant.
\end{conj}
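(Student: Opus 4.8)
By Lemma~\ref{red_to_symm} and Lemma~\ref{preslin} it is enough to prove $\mathrm{SMP}(\kappa)$ for a symmetric isotropic log-concave vector $X$, and I would use this reduction to invoke the full regularity of the one-dimensional marginals of such vectors. The basic input is Borell's lemma: $\|\langle u,X\rangle\|_p\le C\frac pq\|\langle u,X\rangle\|_q$ for all $u\in\er^d$ and $p\ge q\ge1$, so that $\|\langle u,X\rangle\|_p\le Cp|u|$ in the isotropic case. In particular a set $T$ satisfying \eqref{sudass} is automatically separated at scale $A/(Cp)$ in the Euclidean metric, and at scale $cAq/p$ in the $L_q$ metric for every $q\le p$; the freedom to pick the working scale $q$ is something the argument should exploit.

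\textbf{Geometric reformulation.} After a translation we may assume $0\in T$, and we set $V=\mathrm{conv}(T-T)$; since $T\subset T-T$ and a linear functional is maximised at an extreme point, \eqref{sudconcl} reads $\Ex\sup_{v\in V}\langle v,X\rangle\ge\kappa A$. Let $Z_p=Z_p(X)$ be the symmetric convex body with support function $h_{Z_p}(u)=\|\langle u,X\rangle\|_p$; then \eqref{sudass} says precisely that $T$ is $A$-separated in the norm with unit ball $Z_p^\circ$. Disjointness of the interiors of the translates $t+\frac A2 Z_p^\circ$, $t\in T$, gives
\[
e^p\le|T|\le\frac{|\,\mathrm{conv}(T)+\tfrac A2 Z_p^\circ\,|}{|\,\tfrac A2 Z_p^\circ\,|}\le\frac{|\,V+\tfrac A2 Z_p^\circ\,|}{|\,\tfrac A2 Z_p^\circ\,|}.
\]
Hence, up to the value of $\kappa$, the conjecture is equivalent to the intrinsic statement: \emph{for every symmetric isotropic log-concave $X$, every $p\ge1$ and every symmetric convex body $V$ with $|V+Z_p^\circ|\ge e^p|Z_p^\circ|$ one has $\Ex\sup_{v\in V}\langle v,X\rangle\ge\kappa$.} For the standard Gaussian vector $Z_p^\circ\approx p^{-1/2}B_2^d$ and $\Ex\sup_{v\in V}\langle v,X\rangle$ is the Gaussian mean width of $V$, so this is exactly the dual, volumetric form of the classical Sudakov minoration; the plan is to imitate that proof in the log-concave setting.

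\textbf{The mechanism.} To turn the volume inequality into a lower bound for $\Ex\sup_{v\in V}\langle v,X\rangle$ I would combine two ingredients. First, volumetric information about the bodies $Z_q(X)$: one has $Z_q(X)\supseteq B_2^d$ for $q\ge2$ (isotropy), and $|Z_q(X)|^{1/d}\approx\sqrt{q/d}$ for $q\le d$ by Paouris' theorem, which let one compare $Z_p^\circ$ with Euclidean and $Z_q$ balls. Second, a Paouris-type concentration inequality for the process $v\mapsto\langle v,X\rangle$ on $V$, which should yield $\Big(\Ex\big(\sup_{v\in V}\langle v,X\rangle\big)^q\Big)^{1/q}\le C\big(\Ex\sup_{v\in V}\langle v,X\rangle+\sup_{v\in V}\|\langle v,X\rangle\|_q\big)$. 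Feeding in a scale $q\le p$ adapted to the separation, one would like to establish a dichotomy: either some pair in $T$ is already $\gtrsim A$-far in $L_q$ for a $q$ with $e^q\le|T|$ still available, or $V$ is essentially round at the scale of $Z_p^\circ$, in which case the volume lower bound forces $V\supseteq\rho\,Z_q(X)$ for suitable $\rho,q$ and one concludes by a small-ball estimate for $\|X\|_{V^\circ}$.

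\textbf{The main obstacle.} The step I do not see how to carry out in general is exactly the passage from $|V+Z_p^\circ|\ge e^p|Z_p^\circ|$ to a lower bound on $\Ex\sup_{v\in V}\langle v,X\rangle$, and this is precisely why the statement is only conjectural. In the Gaussian case this is the sharp Sudakov inequality, which rests on Gaussian isoperimetry; in the case of independent log-concave coordinates (Examples~\ref{smprad}--\ref{indlogcon}) it is proved by a direct combinatorial argument that uses the product structure of $X$; for a general isotropic log-concave vector neither tool is present, and there is no comparison theorem relating $X$ to a product or Gaussian vector strong enough to transport $L_p$-separation data. A complete proof therefore seems to require either a genuinely new convex-geometric inequality describing how the $Z_q(X)$ bodies ``carry volume'', or a substitute for independence inside the chaining/entropy argument; absent that, the realistic goals are the special classes of log-concave measures and the weaker (large-$p$, or dimension-dependent) forms of $\mathrm{SMP}$ discussed in this paper.
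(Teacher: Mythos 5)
You are right that there is no proof to compare against: the statement you were given is Conjecture~\ref{Sudmin}, which the paper explicitly leaves open, establishing only weaker or special-case versions (arbitrary log-concave with $|T|\geq e^{e^p}$ or with uncorrelated $(\langle t,X\rangle)_{t\in T}$; unconditional with $|T|\geq e^{p^2}$; invariant vectors). Your discussion correctly recognises this, and the individual claims you make along the way are sound: the reduction to symmetric isotropic vectors via Lemmas~\ref{red_to_symm} and~\ref{preslin}, the moment-growth bound $\|\langle u,X\rangle\|_p\leq\frac{p}{q}\|\langle u,X\rangle\|_q$ from Lemma~\ref{momgrow}, the separation-to-covering translation, the identification of the $L_p$-metric unit ball with $Z_p^\circ$, and the fact that for Gaussian $X$ the volumetric reformulation $|V+Z_p^\circ|\geq e^p|Z_p^\circ|\Rightarrow\Ex\sup_{v\in V}\langle v,X\rangle\gtrsim 1$ is exactly the classical Sudakov inequality.

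Where your proposal genuinely diverges from the paper is in the choice of machinery. Your volumetric/$Z_q$-body route (Paouris' volume estimates, concentration of the supremum, small-ball on $\|X\|_{V^\circ}$) is in the spirit of the Mendelson--Milman--Paouris approach, which the paper mentions in the introduction as using ``geometrical properties of an index set $T$, duality of entropy numbers and the idea of dimension reduction.'' The paper's own partial results rely on different inputs: Eldan's exponential-concentration bound plus an orthogonal-decomposition device (Lemma~\ref{ortdec}, Proposition~\ref{supcoord}) for the general $|T|\geq e^{e^p}$ case; comparison with the Rademacher and exponential models for the unconditional $|T|\geq e^{p^2}$ case; and Proposition~\ref{compmominv} for the invariant case. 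Your identification of the missing ingredient — a way to pass from a volume lower bound on $V+Z_p^\circ$ to a mean-width lower bound without Gaussian isoperimetry or a product structure — is precisely the point at which both your route and the paper's stall, so the assessment is accurate and honest rather than a gap in an otherwise complete argument. If you wanted to push this direction further, the natural next step would be to try to prove your dual volumetric reformulation for $q\leq c p$ with a loss that still beats the $p$-dependent bound of Remark~\ref{smallp}, or to see whether Eldan-type concentration can replace Gaussian isoperimetry in a Sudakov-via-covering argument — but you are correct that neither is currently available at full strength.
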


The purpose of this paper is to discuss the above conjecture. 
In Section 2 we gather simple facts concerning log-concave vectors and the Sudakov minoration principle. In particular we
show how to reduce the problem to the case of isotropic vectors. In Section 3 we establish several results concerning 
arbitrary log-concave distribution. We show that \eqref{sudass} implies \eqref{sudconcl} provided that $|T|\geq e^{e^p}$
or $|T|\geq e^p$, but under the additional assumption that vectors $(\langle t,X\rangle)_{t\in T}$ are uncorrelated. 
The proof is based on the concentration
properties of isotropic log-concave distributions. As a byproduct we get a comparison of weak and strong moments
of $\ell_{\infty}^d$-norms of isotropic log-concave vectors. In Section 4 we consider unconditional log-concave vectors.
We show that in this case \eqref{sudass} implies \eqref{sudconcl} provided that $|T|\geq e^{p^2}$. In Section 5
we show that Conjecture \ref{Sudmin} holds for a class of invariant log-concave vectors, which includes rotationally
invariant log-concave vectors and vectors uniformly distributed on $l_p^d$-balls. In the last section we use chaining 
arguments to show some consequences of the Sudakov minoration principle. In particular we show that it yields comparison
of weak and strong moments up to a logarithmic factor.

It should be mentioned that the Sudakov minoration principle and Conjecture \ref{Sudmin} was formulated independently
and studied by Shahar Mendelson, Emanuel Milman and Grigoris Paouris \cite{MMP}. Their approach is however quite different,
uses geometrical properties of an index set $T$, duality of entropy numbers and the idea of dimension reduction, similar
in the spirit to the Johnson-Lindenstrauss lemma.    

\medskip

\noindent
{\bf Notation.}  By $|\cdot|$ and $\langle \cdot , \cdot \rangle$ we denote the canonical
Euclidean norm and the canonical inner product on $\er ^d$.  The canonical basis of $\er ^d$ is denoted 
by $e_1, \ldots, e_d$. For $1\leq p\leq \infty$, $\|\cdot\|_p$ stands for the $l_p$ norm on $\er^d$ and $B_p^d$
is the unit ball in this norm. 

For two convex sets $K,L$ in $\er^d$, $N(K,L)$ is the covering number, i.e.\ the minimal number of translates of $L$ that 
cover $K$. By $|T|$ we denote the cardinality of a set $T$ and by $N(T,d,\ve)$ the minimal number of balls in metric $d$ 
of radius $\ve$ that cover $T$. 

We use letter $C$ for universal constants, value of a constant $C$ may differ at each
occurence. Whenever we want to fix the value of an absolute constant we  use letters 
$C_1,C_2,\ldots$.

\section{Basic Facts} 

We start with a lemma showing how to reduce the problem of proving the Sudakov minoration to the case of symmetric vectors.

\begin{lem}
\label{red_to_symm}
Let $p\geq 1$, $X$ be a random vector in $\er^d$ with finite $p$-th moment and $X'$ be an independent copy of $X$.
If $X-X'$ satisfies $\mathrm{SMP}_p(\kappa)$ then $X$ satisfies $\mathrm{SMP}_p(\min\{1/2,\kappa/4\})$.
\end{lem}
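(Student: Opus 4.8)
The plan is to take a set $T\subset\er^d$ with $|T|\geq e^p$ witnessing the hypothesis for $X$, i.e. $\|\langle t-s,X\rangle\|_p\geq A$ for all distinct $t,s\in T$, and to produce a conclusion of the form $\Ex\sup_{t,s\in T}\langle t-s,X\rangle\geq \kappa' A$ with $\kappa'=\min\{1/2,\kappa/4\}$. The natural move is to compare moments of $\langle t-s,X\rangle$ with moments of the symmetrized variable $\langle t-s,X-X'\rangle$. Since $X$ and $X'$ are i.i.d., $\langle t-s,X-X'\rangle$ is a symmetrization of $\langle t-s,X\rangle$, and the elementary symmetrization inequalities for $L_p$-norms give, for $p\geq 1$,
\begin{equation*}
\|\langle t-s,X\rangle-\Ex\langle t-s,X\rangle\|_p\leq \|\langle t-s,X-X'\rangle\|_p\leq 2\|\langle t-s,X\rangle\|_p.
\end{equation*}

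First I would split into two cases according to whether the centered $L_p$-norms $\|\langle t-s,X\rangle-\Ex\langle t-s,X\rangle\|_p$ are all at least $A/2$, or not. In the first case, the set $T$ satisfies the separation hypothesis \eqref{sudass} for the symmetric vector $X-X'$ with parameter $A/2$ (using the right-hand symmetrization inequality above, up to the factor $2$ it is cleaner to note $\|\langle t-s,X-X'\rangle\|_p\geq\|\langle t-s,X\rangle-\Ex\langle t-s,X\rangle\|_p\geq A/2$); applying $\mathrm{SMP}_p(\kappa)$ for $X-X'$ yields $\Ex\sup_{t,s\in T}\langle t-s,X-X'\rangle\geq\kappa A/2$. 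Then I would desymmetrize the conclusion: by Jensen and independence, $\Ex\sup_{t,s}\langle t-s,X-X'\rangle\leq 2\Ex\sup_{t,s}\langle t-s,X\rangle$ (write $X-X'=(X-\Ex X)-(X'-\Ex X)$, use convexity of the sup in each argument, and symmetry of the two copies). This gives $\Ex\sup_{t,s\in T}\langle t-s,X\rangle\geq\kappa A/4$, as desired.

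In the remaining case there exist distinct $t_0,s_0\in T$ with $|\Ex\langle t_0-s_0,X\rangle|>A/2$ — because if $\|\langle t_0-s_0,X\rangle-\Ex\langle t_0-s_0,X\rangle\|_p<A/2$ while $\|\langle t_0-s_0,X\rangle\|_p\geq A$, the triangle inequality in $L_p$ forces $|\Ex\langle t_0-s_0,X\rangle|>A/2$. Then simply
\begin{equation*}
\Ex\sup_{t,s\in T}\langle t-s,X\rangle\geq\max\{\Ex\langle t_0-s_0,X\rangle,\Ex\langle s_0-t_0,X\rangle\}=|\Ex\langle t_0-s_0,X\rangle|\geq\tfrac12 A,
\end{equation*}
using that the supremum over the symmetric index set $\{(t,s):t,s\in T\}$ dominates both $\pm\Ex\langle t_0-s_0,X\rangle$. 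Combining the two cases gives the constant $\min\{1/2,\kappa/4\}$.

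The only mildly delicate point — and the one I would be most careful about — is the bookkeeping of the symmetrization constants and the direction of each inequality, since both the separation hypothesis and the conclusion must be transported between $X$ and $X-X'$, and one of them moves "for free" while the other costs a factor $2$. In particular one must make sure the hypothesis $|T|\geq e^p$ is unchanged (it is, since $T$ is the same set) and that no centering is secretly needed in \eqref{sudass} for $X-X'$ (there isn't, as $X-X'$ is already symmetric). Everything else is the routine application of Jensen's inequality and the standard symmetrization bounds, so I do not expect a genuine obstacle here.
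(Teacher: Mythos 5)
Your proposal is correct and is essentially the same argument as the paper's proof. You make the same case split on the size of the drift $\Ex\langle t-s,X\rangle$ (phrased slightly differently, via the centered $L_p$-norm rather than the mean directly, but equivalent by the triangle inequality), transfer the separation to $X-X'$ using the same one-sided symmetrization bound $\|\langle t-s,X-X'\rangle\|_p\geq\|\langle t-s,X-\Ex X\rangle\|_p$, and desymmetrize the conclusion with the same factor-$2$ triangle-inequality step (your appeal to ``convexity'' is unnecessary — a direct split of the supremum plus identical distribution of $X$ and $X'$ suffices, which is what the paper does). The constants come out the same, namely $\min\{1/2,\kappa/4\}$.
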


\begin{proof}
Let $p\geq 1$, $T\subset \er^d$ be such that $|T|\geq e^p$ and \eqref{sudass} holds. Jensen's inequality yields 
\[
\Ex \sup_{t,s\in T}\langle t-s,X\rangle\geq \sup_{t,s\in T}\langle t-s, \Ex X\rangle=
\sup_{t,s\in T}|\langle t-s,\Ex X\rangle|.
\]
Therefore we may assume that $|\langle t-s, \Ex X\rangle|\leq A/2$ for all $t,s\in T$. But then for $t\neq s$, $t,s\in T$,
\[
\|\langle t-s,X-X'\rangle\|_p\geq \|\langle t-s, X-\Ex X\rangle\|_p\geq 
\|\langle t-s,X\rangle\|_p-\|\langle t-s,\Ex X\rangle\|_p\geq \frac{A}{2}.
\]
Therefore the $L_p$-Sudakov minoration for $X-X'$ implies
\begin{align*}
\kappa\frac{A}{2}&\leq \Ex\sup_{t,s\in T}\langle t-s,X-X'\rangle\leq \Ex\sup_{t,s\in T}\langle t-s,X\rangle+
\Ex\sup_{t,s\in T}\langle s-t,X'\rangle
\\
&=2\Ex\sup_{t,s\in T}\langle t-s,X\rangle.
\end{align*}
\end{proof}

The next observation states that the Sudakov minoration principle is preserved under linear transformations.

\begin{lem}
\label{preslin}
If $X$ is a $d$-dimensional random vector that satisfies $\mathrm{SMP}_p(\kappa)$ and $U\colon \er^d\rightarrow \er^{d'}$
is linear then $UX$ satisfies $\mathrm{SMP}_p(\kappa)$.
\end{lem}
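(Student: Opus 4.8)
The plan is to transport a separated set for $UX$ back to a separated set for $X$ by means of the adjoint map, and then quote $\mathrm{SMP}_p(\kappa)$ for $X$. Write $U^*\colon\er^{d'}\rightarrow\er^d$ for the transpose of $U$, so that $\langle t',Ux\rangle=\langle U^*t',x\rangle$ for all $t'\in\er^{d'}$ and $x\in\er^d$; in particular $\langle t',UX\rangle=\langle U^*t',X\rangle$ as real random variables.

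First I would fix an arbitrary $T'\subset\er^{d'}$ with $|T'|\geq e^p$ satisfying $\|\langle t'-s',UX\rangle\|_p\geq A$ for all distinct $t',s'\in T'$. One may assume $A>0$, since otherwise \eqref{sudconcl} for $UX$ is trivial (take $t=s$). Set $T:=\{U^*t'\colon t'\in T'\}\subset\er^d$. For distinct $t',s'\in T'$ we have
\[
\|\langle U^*t'-U^*s',X\rangle\|_p=\|\langle t'-s',UX\rangle\|_p\geq A>0,
\]
so $U^*t'\neq U^*s'$; hence $t'\mapsto U^*t'$ is injective on $T'$ and $|T|=|T'|\geq e^p$. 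The same identity shows that $T$ satisfies the separation hypothesis \eqref{sudass} for the vector $X$ with the same constant $A$.

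Next I would apply $\mathrm{SMP}_p(\kappa)$ for $X$ to the set $T$, obtaining $\Ex\sup_{t,s\in T}\langle t-s,X\rangle\geq\kappa A$. It then remains to rewrite the left-hand side in terms of $UX$: since every element of $T$ equals $U^*t'$ for some $t'\in T'$,
\[
\sup_{t,s\in T}\langle t-s,X\rangle=\sup_{t',s'\in T'}\langle U^*t'-U^*s',X\rangle=\sup_{t',s'\in T'}\langle t'-s',UX\rangle ,
\]
and taking expectations gives $\Ex\sup_{t',s'\in T'}\langle t'-s',UX\rangle\geq\kappa A$, which is precisely \eqref{sudconcl} for $UX$. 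Since $T'$ was arbitrary, $UX$ satisfies $\mathrm{SMP}_p(\kappa)$.

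There is essentially no hard step here; the only point that requires a moment's care is that $U^*$ need not be injective in general, so one must exploit the strict positivity of $A$ (guaranteed by the $L_p$-separation) to ensure $|T|=|T'|$, i.e.\ that the cardinality hypothesis $|T|\geq e^p$ is inherited by the pulled-back set.
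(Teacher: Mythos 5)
Your proof is correct and follows exactly the same idea as the paper's one-line proof, which simply notes the identity $\langle t,UX\rangle=\langle U^*t,X\rangle$; you have merely spelled out the details, including the (correct) observation that the $L_p$-separation forces $U^*$ to be injective on $T'$, so the cardinality hypothesis transfers.
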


\begin{proof} It is enough to observe that $\langle t,UX\rangle=\langle U^*t,X\rangle$.
\end{proof}

Now we recall a fact that moments of log-concave variables growth in a regular way.

\begin{lem}
\label{momgrow}
Let $Y$ be a symmetric real log-concave r.v. Then 
\[
\|Y\|_p\leq \frac{\Gamma(p+1)^{1/p}}{\Gamma(q+1)^{1/q}}\|Y\|_q\quad \mbox{for }p\geq q>0.
\]
In particular $\|Y\|_2\leq \sqrt{2}\|Y\|_1$ and $\|Y\|_p\leq \frac{p}{q}\|Y\|_q$ for $p\geq q\geq 2$.
\end{lem}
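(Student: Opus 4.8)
The plan is to reduce to a one-dimensional computation using the standard fact that a symmetric log-concave random variable has a density which is itself log-concave, and then to compare the moments of $|Y|$ against those of the ``extremal'' symmetric exponential density. First I would recall that by Borell's characterization a symmetric real log-concave random variable $Y$ has a density $f$ of the form $e^{-h}$ with $h$ convex and even. The ratio $\|Y\|_p/\|Y\|_q$ is invariant under rescaling $Y\mapsto \lambda Y$, so I may normalize, say by fixing $\|Y\|_1$ or by fixing $f(0)$. The claim is then a pointwise comparison: among all symmetric log-concave densities with a fixed value of $\Ex|Y|^q$ (or with a fixed normalization), the moment $\Ex|Y|^p$ for $p\geq q$ is maximized by the symmetric exponential density $f(x)=\frac12 e^{-|x|}$ (after the appropriate rescaling), for which $\Ex|Y|^r=\Gamma(r+1)$ and hence $\|Y\|_r=\Gamma(r+1)^{1/r}$.

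The technical heart is this extremal property, which I would establish via a rearrangement/majorization argument. Consider $g(x)=2f(x)\ind_{\{x\ge 0\}}$, a log-concave density on $[0,\infty)$, and let $r>0$ be arbitrary; I want to compare $\int_0^\infty x^r g(x)\,dx$ across such $g$. One clean route: for a log-concave $g$ on $[0,\infty)$, the tail function $x\mapsto g(x)/g(0)$ lies below an exponential $e^{-cx}$ near $0^+$ in the sense that crosses it at most a bounded number of times, which lets one apply a ``single-crossing'' comparison lemma: if two probability densities $g_1,g_2$ on $[0,\infty)$ satisfy $g_1\ge g_2$ on $[0,a]$ and $g_1\le g_2$ on $[a,\infty)$ for some $a$, then $\int x^r g_1\le \int x^r g_2$ for a range of $r$, with the direction of the inequality controlled by the normalization. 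The cleanest packaging is to use the known fact (see, e.g., work on log-concave moment comparison) that for a log-concave $g$ on $[0,\infty)$ with $\int g=1$, the function $p\mapsto \Gamma(p+1)^{-1/p}\left(\int_0^\infty x^p g(x)\,dx\right)^{1/p}$ is non-increasing in $p>0$; the extremal case, equality for all $p$, being exactly $g(x)=e^{-x}$. Granting this, the stated inequality $\|Y\|_p\le \Gamma(p+1)^{1/p}\Gamma(q+1)^{-1/q}\|Y\|_q$ for $p\ge q>0$ is immediate.

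For the ``in particular'' clauses: plug in $q=1$, $p=2$ to get $\|Y\|_2\le \Gamma(3)^{1/2}\Gamma(2)^{-1}\|Y\|_1=\sqrt{2}\,\|Y\|_1$. For the last one, with $2\le q\le p$ it suffices to check $\Gamma(p+1)^{1/p}/\Gamma(q+1)^{1/q}\le p/q$; this is a routine estimate, e.g. from Stirling's formula $\Gamma(r+1)^{1/r}\le C r$ one only needs the comparison in the regime $r\ge 2$, and one verifies that $r\mapsto \Gamma(r+1)^{1/r}/r$ is non-increasing for $r\ge 1$ (its logarithm is $\frac1r\log\Gamma(r+1)-\log r$, whose derivative is negative by convexity of $\log\Gamma$), so the ratio $\Gamma(p+1)^{1/p}/\Gamma(q+1)^{1/q}$ is at most $(p/q)\cdot\big(\Gamma(q+1)^{1/q}/q\big)/\big(\Gamma(q+1)^{1/q}/q\big)$... more directly, monotonicity gives $\Gamma(p+1)^{1/p}/p\le \Gamma(q+1)^{1/q}/q$, i.e. exactly $\Gamma(p+1)^{1/p}/\Gamma(q+1)^{1/q}\le p/q$.

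I expect the main obstacle to be the rigorous proof of the extremal/monotonicity statement for the normalized moments of a one-dimensional log-concave density — the single-crossing argument requires care in locating the crossing point and in tracking how the normalization constraint forces the sign of the difference of integrals. Everything else (reduction to one dimension by log-concavity of the density, rescaling invariance, and the Gamma-function estimates for the special cases) is routine.
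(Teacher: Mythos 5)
Your proposal is, at bottom, the same as the paper's: both reduce the main inequality to the known moment-comparison theorem for one-dimensional log-concave densities, which the paper attributes to Barlow--Marshall--Proschan (and Berwald), and which you also invoke as a ``known fact'' before your final reduction. What you add is (a) a sketch of why that fact should hold via a crossing argument against an exponential, and (b) an explicit derivation of the Gamma-function estimate that the paper instead cites from the earlier work of Lata{\l}a--Wojtaszczyk. Two caveats on the added material. First, the crossing structure you describe is not quite the right picture: if you normalize the exponential $\lambda e^{-\lambda x}$ so that it matches $g$ in both total mass and the $q$-th moment, then $g-\lambda e^{-\lambda x}$ changes sign \emph{twice} (once upward, once downward) because $\log g + \lambda x$ is concave, and the correct comparison function is $x\mapsto x^p-\alpha x^q-\beta$ vanishing at the two crossing points $a<b$ (one checks $\alpha>0$, $\beta<0$, so this function is negative exactly on $(a,b)$ and nonnegative outside, giving $\int (x^p-\alpha x^q-\beta)(g-\lambda e^{-\lambda x})\le 0$, hence $\int x^p g\le \Gamma(p+1)\lambda^{-p}$). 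A genuine single-crossing lemma with only $\int g=1$ matched does not by itself control the $q$-moment. Second, the claim that $r\mapsto \Gamma(r+1)^{1/r}/r$ is non-increasing is true, but ``by convexity of $\log\Gamma$'' alone is too glib: differentiating the logarithm reduces it to $r\psi(r+1)-\log\Gamma(r+1)<r$, which after writing the left side as $\int_0^r u\,\psi'(u+1)\,du$ requires the quantitative bound $\psi'(u+1)<1/u$ (itself elementary from $\psi'(u+1)=\sum_{n\ge1}(u+n)^{-2}<\sum_{n\ge1}\big(\tfrac{1}{u+n-1}-\tfrac{1}{u+n}\big)=1/u$). With these two points fixed the proposal is a correct and somewhat more self-contained version of the paper's (purely citation-based) proof.
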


\begin{proof}
The main inequality is the result of Barlow, Marshall and Proschan \cite{BMP}
(it may be also extracted from the much earlier work of Berwald \cite{Ber}). 
To show the ``in particular"
part for $p\geq q\geq 2$ one needs to estimate $\Gamma$ functions as it was done in \cite[Proposition 3.8]{LaW}.
\end{proof}

\begin{rem}
\label{smallp}
Suppose that $|T|\geq 2$ and \eqref{sudass} holds. Then if $X$ is symmetric log-concave
we may choose $t^1,t^2\in T$ with $t^1\neq t^2$ and get by Lemma \ref{momgrow}
\[
\Ex \sup_{t,s\in T}\langle t-s,X\rangle
\geq \Ex |\langle t^2-t^1,X\rangle|
\geq \frac{\sqrt{2}}{\max\{p,2\}}\|\langle t^2-t^1,X\rangle\|_p
\geq \frac{\sqrt{2}}{\max\{p,2\}}A.
\]
Hence every symmetric log-concave vector satisfies $\mathrm{SMP}_p(\sqrt{2}/\max\{p,2\})$ and every
log-concave vector satisfies $\mathrm{SMP}_p(\sqrt{2}/\max\{4p,8\})$.
\end{rem}

\begin{rem}
\label{equivcov}
Let us define for $p\geq 1$ the distance on $\er^d$
\[
d_{X,p}(s,t):=\|\langle s-t,X\rangle\|_p.%%=(\Ex (\sum_{i=1}^d (t_i-s_i)X_i)^p)^{1/p}.
\]
Suppose that \eqref{sudass} is satisfied, but $ |T|= e^q$ with $1\leq q\leq p$. 
We know that $d_{X,q}(s,t)\geq \frac{q}{Cp}d_{X,p}(s,t)$ so the Sudakov minoration principle
for a log-concave vector $X$  implies
the  following formally stronger statement -- for any nonempty $T\subset \er^d$ and $A>0$,
\[
\Ex\sup_{t,s\in T}\langle t-s,X\rangle \geq \frac{\kappa}{C}\sup_{p\geq 1}\min\Big\{\frac{A}{p}\log N(T,d_{X,p},A),A\Big\}.
\]
\end{rem}

The next result says that it is enough to verify the Sudakov minoration property only for $p\leq d$.

\begin{lem}
\label{red_belowd}
Let $X$ be a symmetric log-concave random vector in $\er^d$ that satisfies $\mathrm{SMP}_d(\kappa)$.
Then $X$ satisfies $\mathrm{SMP}_p(\kappa/8)$ for $p\geq d$.  
\end{lem}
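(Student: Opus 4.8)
The plan is to derive $\mathrm{SMP}_{p}$ from $\mathrm{SMP}_{d}$ by locating, inside any admissible index set $T$, a subset that is at once \emph{large} (at least $e^{d}$ points) and \emph{well separated} in the $L_{d}$-metric $d_{X,d}$, and then applying $\mathrm{SMP}_{d}(\kappa)$ to that subset; since a supremum over a subset of $T$ bounds from below the supremum over $T$, this yields the conclusion. Apart from $\mathrm{SMP}_{d}(\kappa)$ the only ingredient is Lemma \ref{momgrow}: as each $\langle v,X\rangle$ is symmetric log-concave, for $p\geq d\geq 2$ one has $d_{X,d}(s,t)\leq d_{X,p}(s,t)\leq\frac pd d_{X,d}(s,t)$ for all $s,t$, i.e.\ $D_{p}\subseteq D_{d}\subseteq\frac pd D_{p}$ where $D_{q}:=\{v\in\er^{d}:\|\langle v,X\rangle\|_{q}\leq 1\}$. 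I would start with the routine reductions: $T$ may be assumed finite, $X$ nondegenerate (so the $D_{q}$ are bounded convex bodies), and $d\geq 2$ (for $d=1$ one argues identically, with $\frac pd$ replaced by $\max\{p,2\}$).

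Fix $p\geq d$ and $T\subseteq\er^{d}$ with $|T|\geq e^{p}$ and $d_{X,p}(s,t)\geq A$ for all distinct $s,t\in T$. If $p\leq 8d$, then $d_{X,d}(s,t)\geq\frac dp d_{X,p}(s,t)\geq\frac A8$ for distinct $s,t\in T$, so (as $|T|\geq e^{p}\geq e^{d}$) $\mathrm{SMP}_{d}(\kappa)$ applied to $T$ itself gives $\Ex\sup_{t,s\in T}\langle t-s,X\rangle\geq\kappa A/8$. So the substantial case is $p>8d$, where the two geometries genuinely differ; note that merely retaining an arbitrary $e^{d}$-point subset of $T$ only gives the useless bound $\kappa\frac dp A$, so the largeness $|T|\geq e^{p}$ must be exploited.

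For $p>8d$, let $U$ be a maximal subset of $T$ with $d_{X,d}(t,s)\geq\frac A8$ for all distinct $t,s\in U$. If $|U|\geq e^{d}$, then $\mathrm{SMP}_{d}(\kappa)$ applied to $U$ gives $\Ex\sup_{t,s\in U}\langle t-s,X\rangle\geq\kappa A/8$, hence the same bound for $T$, and we are done. It remains to exclude $|U|<e^{d}$. By maximality $U$ is an $\frac A8$-net of $T$ for $d_{X,d}$, so $T\subseteq\bigcup_{x\in U}(x+\frac A8 D_{d})$; pigeonholing over the fewer than $e^{d}$ cells yields $x_{0}$ and $T_{1}:=T\cap(x_{0}+\frac A8 D_{d})$ with $|T_{1}|>e^{p}/e^{d}=e^{p-d}$. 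Then $T_{1}-x_{0}\subseteq\frac A8 D_{d}\subseteq\frac{pA}{8d}D_{p}$ while $T_{1}$ is still $A$-separated in $d_{X,p}$, so the bodies $v+\frac A2 D_{p}$, $v\in T_{1}-x_{0}$, have pairwise disjoint interiors and lie in $(\frac{pA}{8d}+\frac A2)D_{p}$; comparing Lebesgue volumes gives $|T_{1}|\leq(1+\frac p{4d})^{d}\leq e^{p/4}$. Since $p>8d>\frac43 d$ forces $p-d>p/4$, this contradicts $|T_{1}|>e^{p-d}$; hence $|U|\geq e^{d}$.

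The crux — and the only place where the numerics must be watched — is the volume estimate in the last paragraph: it has to force $|T_{1}|$ strictly below the pigeonhole lower bound $e^{p-d}$, and this succeeds precisely because the comparison $d_{X,p}\leq\frac pd d_{X,d}$ from Lemma \ref{momgrow} loses only the factor $p/d$ and nothing more. All the rest — the reductions, the two applications of $\mathrm{SMP}_{d}(\kappa)$, and the net/packing bookkeeping — is routine.
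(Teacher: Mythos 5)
Your proof is correct and closely parallels the paper's: both split at $p\approx 8d$, both rely on Lemma \ref{momgrow} to compare $d_{X,p}$ with $d_{X,d}$ (losing only a factor $p/d$), and both use a volumetric argument in $\er^d$ to extract an $e^d$-sized subset of $T$ that is $\frac{A}{8}$-separated in $d_{X,d}$, to which $\mathrm{SMP}_d(\kappa)$ is then applied. The only real difference is organizational: the paper constructs that subset directly via submultiplicativity of covering numbers together with the bound $N\big(u\tfrac{A}{2}{\cal M}_p(X),\tfrac{A}{2}{\cal M}_p(X)\big)\leq(2u+1)^d$ at scale $u=e^{p/(4d)}$, whereas you run the same volume estimate as a proof by contradiction, showing via pigeonhole and a packing bound that a maximal $\frac{A}{8}$-separated subset in $d_{X,d}$ cannot have fewer than $e^d$ elements.
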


\begin{proof}
Fix $p\geq d$ and $T\subset \er^d$ such that $|T|\geq e^p$ and $\|\langle s-t,X\rangle\|_p\geq A$ for any distinct points $s,t\in T$.  
Let 
\[
{\cal M}_p(X):=\{t\in \er^d\colon\ \Ex|\langle t,X\rangle|^p\leq 1\}.
\]

If $d\leq p\leq 8d$ then by Lemma \ref{momgrow}, $\|\langle s-t,X\rangle\|_d\geq \frac{1}{8}A$, hence $\mathrm{SMP}_d(\kappa)$ 
yields $\Ex\sup_{t,s\in T}\langle t-s,X\rangle \geq \frac{\kappa}{8}A$.

If $p\geq 8d$ then we have for $u\geq 1$,
\begin{align*}
e^p&\leq N\Big(T,\frac{A}{2}{\cal M}_p(X)\Big)\leq 
N\Big(T,u\frac{A}{2}{\cal M}_p(X)\Big)N\Big(u\frac{A}{2}{\cal M}_p(X),\frac{A}{2}{\cal M}_p(X)\Big)
\\
&\leq N\Big(T,u\frac{A}{2}{\cal M}_p(X)\Big)(2u+1)^d,
\end{align*}
where the last inequality follows by the standard volumetric argument.
This shows that $N(T,u\frac{A}{2}{\cal M}_p(X))\geq e^d$ if $u\leq e^{p/(4d)}$, therefore we may find $T_1\subset T$
with $|T_1|\geq e^d$ such that for all $s,t\in T_1$, $s\neq t$,
\[
\|\langle s-t,X\rangle\|_d\geq  \frac{d}{p}\|\langle s-t,X\rangle\|_p \geq \frac{d}{p}e^{p/(4d)}\frac{A}{2}\
\geq \frac{A}{8}.
\]
Thus again $\mathrm{SMP}_d(\kappa)$ yields
\[
\Ex\sup_{t,s\in T}\langle t-s,X\rangle \geq \Ex\sup_{t,s\in T_1}\langle t-s,X\rangle \geq \kappa\frac{A}{8}.
\]
\end{proof}

\begin{rem}
Lemmas \ref{red_to_symm} and \ref{red_belowd} together with Remark \ref{smallp} show that every log-concave vector
satisfy $\mathrm{SMP}(1/(Cd))$. 
\end{rem}

The following easy observation shows that Sudakov minoration holds with a universal constant if $p$ is large with respect
to the dimension $d$.

\begin{lem}
Every symmetric $d$-dimensional log-concave vector $X$ satisfies for $p\geq 2$, 
$\mathrm{SMP}_p(\frac{1}{\sqrt{2}p}(e^{p/d}-1))$. In particular $X$ satisfies 
$\mathrm{SMP}_p(\frac{1}{3})$ for $p\geq 2d\log(d+e)$.
\end{lem}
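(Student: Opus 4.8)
The plan is to combine an elementary volumetric packing bound with the regular growth of moments of one-dimensional log-concave marginals recorded in Lemma~\ref{momgrow}. Fix $p\ge 2$ and $T\subset\er^d$ with $|T|\ge e^p$ satisfying \eqref{sudass}; since $\Ex\sup_{t,s\in T}\langle t-s,X\rangle$ only decreases when we shrink $T$, I may assume $T$ is finite. I would also first reduce to the case in which $X$ has full-dimensional support: if $X$ takes values in a subspace $V$ with $\dim V=d'\le d$, write $X=UY$ for a linear isomorphism $U$ of $\er^{d'}$ onto $V$ and a symmetric log-concave $Y$ on $\er^{d'}$ whose support spans $\er^{d'}$; the estimate for $Y$ with $d'$ in place of $d$ is stronger (as $p/d'\ge p/d$), and $\mathrm{SMP}_p$ transfers from $Y$ to $X$ by Lemma~\ref{preslin}.

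Assume then that $X$ has full-dimensional support, so that ${\cal M}_p(X)=\{t\colon \Ex|\langle t,X\rangle|^p\le 1\}$ is a symmetric convex body with $0<\vol({\cal M}_p(X))<\infty$, and its Minkowski functional is precisely $t\mapsto d_{X,p}(0,t)=\|\langle t,X\rangle\|_p$. Condition \eqref{sudass} says exactly that the points of $T$ are $A$-separated in the norm $\|\cdot\|_{{\cal M}_p(X)}$. Put $D:=\max_{t,s\in T}\|\langle t-s,X\rangle\|_p$ and fix $t_0\in T$. Then $T\subset t_0+D{\cal M}_p(X)$, the bodies $t+\tfrac A2{\cal M}_p(X)$, $t\in T$, have pairwise disjoint interiors, and all of them are contained in $t_0+(D+\tfrac A2){\cal M}_p(X)$. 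Comparing volumes gives $|T|(\tfrac A2)^d\le(D+\tfrac A2)^d$, hence
\[
e^p\le |T|\le\Big(\tfrac{2D}{A}+1\Big)^d,\qquad\text{so}\qquad D\ge\tfrac A2\big(e^{p/d}-1\big).
\]

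Now pick $t^\ast,s^\ast\in T$ with $\|\langle t^\ast-s^\ast,X\rangle\|_p=D$. The variable $\langle t^\ast-s^\ast,X\rangle$ is symmetric and log-concave, so by Lemma~\ref{momgrow} (with $p\ge 2$, using $\|\cdot\|_p\le\tfrac p2\|\cdot\|_2\le\tfrac p{\sqrt2}\|\cdot\|_1$) we get $\Ex|\langle t^\ast-s^\ast,X\rangle|\ge\tfrac{\sqrt2}{p}D$, and therefore
\[
\Ex\sup_{t,s\in T}\langle t-s,X\rangle\ge\Ex|\langle t^\ast-s^\ast,X\rangle|\ge\frac{\sqrt2}{p}\cdot\frac A2\big(e^{p/d}-1\big)=\frac1{\sqrt2\,p}\big(e^{p/d}-1\big)A,
\]
which is the asserted $\mathrm{SMP}_p$ bound. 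For the ``in particular'' clause, write $\kappa=\tfrac1{\sqrt2\,p}(e^{p/d}-1)=\tfrac1{\sqrt2\,d}\cdot\tfrac{e^{p/d}-1}{p/d}$ and use that $y\mapsto(e^y-1)/y$ is nondecreasing on $(0,\infty)$; for $p\ge 2d\log(d+e)$ this gives $\kappa\ge\tfrac{(d+e)^2-1}{2\sqrt2\,d\log(d+e)}$, and a short computation shows $3\big((d+e)^2-1\big)\ge 2\sqrt2\,d\log(d+e)$ for every integer $d\ge1$ (it reduces to $3d+6e\ge2\sqrt2\log(d+e)$, which is clear since the left-hand side is linear, increasing, and already exceeds the right-hand side at $d=1$), so $\kappa\ge\tfrac13$.

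There is no serious obstacle here: the only points needing a little care are making sure ${\cal M}_p(X)$ is an honest bounded body so that the volume comparison is legitimate --- which is exactly what the reduction to full-dimensional support buys us --- and the elementary numerical check in the ``in particular'' part.
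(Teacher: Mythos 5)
Your proof is correct and takes essentially the same route as the paper: a volumetric packing argument to extract two points of $T$ at large distance, followed by the regular moment growth $\|\cdot\|_1\geq \frac{\sqrt2}{p}\|\cdot\|_p$ from Lemma~\ref{momgrow}. The only cosmetic difference is that you pack directly with the body ${\cal M}_p(X)$ (after reducing to full-dimensional support), whereas the paper first normalizes to isotropic $X$ via Lemma~\ref{preslin}, converts the $L_p$-separation into Euclidean separation, and packs with Euclidean balls --- both choices yield the identical constant $\tfrac{1}{\sqrt2\,p}(e^{p/d}-1)$.
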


\begin{proof}
By Lemma \ref{preslin} we may assume that $X$ is isotropic. Assume that $|T|\geq e^p$, $p\geq 2$ and
\eqref{sudass} holds. Then by Lemma \ref{momgrow}
\[
|t-s|=\|\langle t-s,X\rangle\|_2\geq \frac{2}{p}\|\langle t-s,X\rangle\|_p
\geq \frac{2}{p}A.
\]

This shows that the sets $(t+\frac{A}{p}B_2^d)_{t\in T}$ have disjoint interiors. Standard volumetric argument
gives that there exist $t,s\in T$ such that $|t-s|\geq \frac{A}{p}(|T|^{1/d}-1)$.
We have
\[
\Ex\sup_{t,s\in T}\langle t-s,X\rangle= \sup_{t,s\in T}\Ex|\langle t-s,X\rangle|\geq
\frac{1}{\sqrt{2}}\sup_{t,s\in T}|t-s|\geq \frac{A}{\sqrt{2}p}(e^{p/d}-1).
\]

Finally if $p=ad\log(d+e)$ with $a\geq 2$ then $p\geq 2$ and 
$e^{p/d}-1\geq d(d+e)^{a-1}\geq d(a-1)\log(d+e)\geq \frac{1}{2}p$.
\end{proof}

\section{Estimates for general log-concave measures}

We say that a random vector $X$ in $\er^d$ satisfies exponential concentration with a constant 
$\alpha<\infty$ if for any Borel set $B$ in $\er^d$,
\[
\Pr(X\in B)\geq \frac{1}{2}\ \Rightarrow\ \Pr(X\in B+\alpha uB_2^d)\geq 1-e^{-u}\  
\mbox{ for }  u>0. 
\]
It is an important open problem \cite{KLS} whether isotropic log-concave vectors satisfy exponential concentration 
with a universal constant. E.~Milman \cite{Mi} showed that this problem has numerous equivalent functional
and isoperimetrical formulations. Klartag \cite{Kl} proved that every isotropic $d$-dimensional log-concave vector satisfies
exponential concentration with a constant $\alpha\leq Cd^{1/2-\ve}$ with $\ve\geq 1/30$. This bound was improved
by Eldan \cite{El} to $\alpha\leq Cd^{1/3}\log^{1/2}(d+1)$.

We start this section with deriving a simple consequence of exponential concentration, which will be used in the
sequel to estimate $\ell_\infty$-norms of log-concave vectors.

\begin{prop}
\label{conc}
Suppose that a random vector $X$ satisfies exponential concentration with a constant 
$\alpha$. Then for any $V>0$, $p\geq 2$ and $T\subset\er^d$ we have
\begin{align*}
\Big(\Ex\sum_{t\in T}(|\langle t, X\rangle|\wedge V)^p\Big)^{1/p}
&\leq 
 2\Ex\Big(\sum_{t\in T}(|\langle t, X\rangle|\wedge V)^p\Big)^{1/p}
\\
&\phantom{aa} +2^{1/p}V^{(p-2)/p}(\alpha\beta(T))^{2/p},
\end{align*}
where
\[
\beta(T):=\sup_{|x|=1}\Big(\sum_{t\in T}|\langle t, x\rangle|^2\Big)^{1/2}.
\]
\end{prop}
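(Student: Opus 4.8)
The plan is to bound the function $F(X) := \bigl(\sum_{t\in T}(|\langle t,X\rangle|\wedge V)^p\bigr)^{1/p}$ by its mean using exponential concentration, which requires controlling the Lipschitz constant of $F$ with respect to the Euclidean metric. First I would observe that $F$ is a norm-like functional: writing $g_t(X) = |\langle t,X\rangle|\wedge V$, each $g_t$ is $1$-Lipschitz in $X$ (since $x\mapsto |\langle t,x\rangle|$ has Lipschitz constant $|t|$, but the truncation at level $V$... actually one must be careful — the truncation does not reduce $|t|$). The key point is that $F$ is the $\ell_p$-norm of the vector $(g_t(X))_{t\in T}$, and by the triangle inequality in $\ell_p$ combined with $|g_t(X)-g_t(Y)|\le |\langle t, X-Y\rangle|$, we get
\[
|F(X)-F(Y)| \le \Bigl(\sum_{t\in T}|\langle t,X-Y\rangle|^p\Bigr)^{1/p}.
\]
So I need to bound $\sup_{|z|\le 1}\bigl(\sum_{t\in T}|\langle t,z\rangle|^p\bigr)^{1/p}$. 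Here is where the truncation level $V$ enters: since each $g_t(X)\le V$, whenever $F(X)$ is large the number of indices $t$ with $|\langle t,X\rangle|$ comparable to its value is limited, but more directly, I would interpolate: $\sum_t |\langle t,z\rangle|^p \le V^{p-2}\sum_t |\langle t,z\rangle|^2 \le V^{p-2}\beta(T)^2$ — wait, this bound uses $|\langle t,z\rangle|\le V$ which need not hold for general $z$. Instead the right move is to bound the Lipschitz constant \emph{of the truncated increments}: $|g_t(X)-g_t(Y)| \le \min\{|\langle t,X-Y\rangle|, V\}$ is false in general too, but $|g_t(X)-g_t(Y)|\le |\langle t,X-Y\rangle|$ always, and separately each term contributes at most... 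The cleanest approach: apply concentration to $F$ around its median $M$, and split the deviation into the region where $F(X)\le 2\Ex F$ (contributing the first term on the right) and the tail, where we use that $F$ is Lipschitz with constant $L := \sup_{|z|=1}(\sum_t|\langle t,z\rangle|^p)^{1/p}$ and that $L \le V^{(p-2)/p}\beta(T)^{2/p}$ — this last inequality holding because for the worst-case direction $z$ we may assume WLOG (by the structure of the problem, replacing $t$ by truncated versions) that $|\langle t,z\rangle|\le V$, giving $\sum_t |\langle t,z\rangle|^p \le V^{p-2}\sum_t|\langle t,z\rangle|^2$.

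The main calculation I would then carry out: by exponential concentration applied to the set $B = \{F\le M\}$ (which has probability $\ge 1/2$, $M$ being the median), for $u>0$,
\[
\Pr\bigl(F(X) > M + \alpha L u\bigr) \le e^{-u},
\]
since $F(X)\le M + \alpha L u$ whenever $X\in B + \alpha u B_2^d$ (as $F$ is $L$-Lipschitz). Then I compute the $p$-th moment:
\[
\Ex F(X)^p \le \bigl(M + \text{correction}\bigr)^p \lesssim M^p + (\alpha L)^p\, \Ex U_+^p,
\]
where $U$ is exponential, so $\Ex U_+^p \le p! \le p^p$, but crucially $\alpha L \cdot p^{?}$... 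Actually the exponent bookkeeping is the delicate part: $(\alpha L)^p \Gamma(p+1) \approx (\alpha L)^p p^p$, and $(L)^p = V^{p-2}\beta(T)^2$, so $(\alpha L)^p \Gamma(p+1) \approx \alpha^p V^{p-2}\beta(T)^2 p^p$ — but the target has $\alpha^2\beta(T)^2 V^{p-2}$, not $\alpha^p$. So I must NOT factor $\alpha$ out of $L$; instead $\alpha L u \le \alpha \cdot$ (Lipschitz const) and the relevant scalar is the product $\alpha u$ against which the \emph{increment} $\langle t, \cdot\rangle$ is evaluated, and the truncation at $V$ must be used inside: the deviation in the $t$-th coordinate after moving by $\alpha u B_2^d$ is at most $\min\{\alpha u |t|, V\}$ — no wait, it's at most $\min\{\alpha u, V\}$ per unit... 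Let me restate: moving $X$ by a vector of norm $\le \alpha u$ changes $g_t$ by at most $\min\{\alpha u\,\|t\|, 2V\}$, but since all $g_t \le V$, the change is also $\le V$ trivially; the useful bound combining both, $\sum_t(\text{change})^p \le (2V)^{p-2}\sum_t(\alpha u)^2|t|^2\cdot\mathbf 1 \le (2V)^{p-2}(\alpha u)^2\beta(T)^2$. Raising to $1/p$ and integrating $\int_0^\infty \cdots e^{-u}du$ with $\Ex u^{2/p}\le 1$ for $p\ge 2$ gives the claimed $2^{1/p}V^{(p-2)/p}(\alpha\beta(T))^{2/p}$.

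The step I expect to be the main obstacle is precisely this coupling of the truncation level $V$ with the concentration radius $u$: one must observe that after displacing $X$ within a ball of radius $\alpha u$, each truncated coordinate $g_t$ changes by something that is simultaneously $O(V)$ and $O(\alpha u |t|)$, so that the $\ell_p$-norm of the change vector is at most $(CV)^{(p-2)/p}(\alpha u\,\beta(T)^{1/2}\cdot\beta(T)^{1/2})^{2/p}$, i.e. interpolating an $\ell_\infty$ bound ($\le V$) against an $\ell_2$ bound (controlled by $\beta(T)$). Getting the constant $2$ in front of $\Ex F$ and the constant $2^{1/p}$ on the correction term requires a careful but routine choice in the median-versus-mean comparison (using $M \le 2\Ex F$, which follows since $\Pr(F > 2\Ex F)\le 1/2$ forces $M\le 2\Ex F$) and in estimating $\int_0^\infty (something + \alpha u\cdots)^{?}$ — I would handle these by the subadditivity $(a+b)^{1/p}\le a^{1/p}+b^{1/p}$ is false, rather $(a+b)^{1/p}\le 2^{(p-1)/p}\max\{\ldots\}$ type inequalities, or more simply by working with $F^p$ throughout and only taking the $p$-th root at the very end.
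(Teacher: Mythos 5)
Your proposal ultimately lands on exactly the paper's argument: apply exponential concentration to the set $B=\{F\le 2\Ex F\}$, bound the increment of $F$ under a perturbation $z$ with $|z|\le\alpha u$ by $\big(\sum_t(|\langle t,z\rangle|\wedge V)^p\big)^{1/p}\le V^{(p-2)/p}\big(\sum_t|\langle t,z\rangle|^2\big)^{1/p}\le V^{(p-2)/p}(\alpha u\,\beta(T))^{2/p}$ (the $\ell_\infty$--$\ell_2$ interpolation into $\ell_p$ you describe), and integrate the resulting tail $\Pr\big(S\ge 2M+V^{(p-2)/p}(\alpha u\beta(T))^{2/p}\big)\le e^{-u}$ in $L_p$ to get the factor $2^{1/p}$. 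A few intermediate statements in your write-up are off — the change in $g_t$ is bounded by $V$, not $2V$; the quantity $\sum_t|t|^2$ is not $\beta(T)^2$ (one must keep $\sum_t|\langle t,z\rangle|^2\le|z|^2\beta(T)^2$); and the relevant moment is $\Ex u^2=2$, not ``$\Ex u^{2/p}\le1$'' — but the final bound and the overall route coincide with the paper's proof.
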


\begin{proof}
Let 
\[
S:= \Big(\sum_{t\in T}(|\langle t, X\rangle|\wedge V)^p\Big)^{1/p} \quad
\mbox{ and }\quad
M:=\Ex S.
\]
Define also
\[
B:=\Big\{x\in \er^n\colon\ \Big(\sum_{t\in T}(|\langle t, x\rangle|\wedge V)^p\Big)^{1/p}\leq 2M \Big\}.
\]
Then $\Pr(X\in B)\geq 1/2$. Notice that for $x=y+z\in B+ uB_2^n$, $u>0$ we have
\begin{align*}
\Big(\sum_{t\in T}(|\langle t, x\rangle|\wedge V)^p\Big)^{1/p}&\leq
\Big(\sum_{t\in T}(|\langle t, y\rangle|\wedge V)^p\Big)^{1/p}
+\Big(\sum_{t\in T}(|\langle t, z\rangle|\wedge V)^p\Big)^{1/p}
\\
&\leq
2M+V^{(p-2)/p}\Big(\sum_{t\in T}|\langle t, z\rangle|^2\Big)^{1/p}
\\
&\leq 2M+V^{(p-2)/p}(u\beta(T))^{2/p}.
\end{align*}
Hence exponential concentration yields
\[
\Pr\Big(S\geq 2M+V^{(p-2)/p}(\alpha\beta(T)u)^{2/p}\Big)\leq e^{-u}
\quad\mbox{ for }u>0.
\]
Integrating by parts this gives
\begin{align*}
&(\Ex(S-2M)_{+}^p)^{1/p}
\\
&\leq 
V^{(p-2)/p}(\alpha\beta(T))^{2/p}\Big(p\int_{0}^\infty u^{p-1}\Pr(S\geq 2M+V^{(p-2)/p}(\alpha\beta(T))^{2/p}u)\de u\Big)^{1/p}
\\
&\leq V^{(p-2)/p}(\alpha\beta(T))^{2/p}
\Big(p\int_{0}^\infty u^{p-1}e^{-u^{p/2}}\de u\Big)^{1/p}=2^{1/p}V^{(p-2)/p}(\alpha\beta(T))^{2/p}.
\end{align*}
\end{proof}

We also need a simple technical lemma.

\begin{lem}
\label{lpwedge}
Suppose that $Y$ is a real symmetric log-concave r.v., $p\geq 2$ and $\|Y\|_p\geq V>0$.
Then $\Ex(|Y|\wedge V)^p\geq (V/12)^{p}.$
\end{lem}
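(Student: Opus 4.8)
Replacing $Y$ by $Y/V$, which is again symmetric log-concave, I may assume $V=1$; the claim becomes $\Ex(|Y|\wedge 1)^p\ge 12^{-p}$ under the hypothesis $\Ex|Y|^p\ge 1$. If $Y$ is degenerate (a point mass), then $\|Y\|_p\ge 1$ forces the mass to sit at a point of absolute value $\ge 1$ and the conclusion is immediate, so I may assume $Y$ has a log-concave density $f$. Set $G(t):=\Pr(|Y|>t)$. I split into two cases according to the size of $\Pr(|Y|\ge 1)$. If $\Pr(|Y|\ge 1)\ge 12^{-p}$, then already
\[
\Ex(|Y|\wedge 1)^p\ge \Ex\big[(|Y|\wedge 1)^p\ind_{\{|Y|\ge 1\}}\big]=\Pr(|Y|\ge 1)\ge 12^{-p},
\]
and we are done; note it is this (sharp) case that fixes the constant.

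\textbf{The tail case.} It remains to treat $\Pr(|Y|\ge 1)<12^{-p}$, so $G(1)<12^{-p}$. Here I would use that the tail $t\mapsto G(t)$ is log-concave on $[0,\infty)$: by symmetry $G(t)=2\int_t^\infty f(s)\,\de s$, and log-concavity of such a half-line integral of a log-concave density is a standard consequence of the Pr\'ekopa theorem on marginals of log-concave functions (apply it to $(t,s)\mapsto f(s)\ind_{\{s\ge t\}}$ on $\er^2$). Since $G(0)=1$, i.e. $\log G(0)=0$, applying concavity of $\log G$ at the point $1=\frac1t\cdot t+(1-\frac1t)\cdot 0$ gives $\log G(1)\ge \frac1t\log G(t)$, hence $G(t)\le G(1)^t\le 12^{-pt}$ for every $t\ge 1$. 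Using the layer-cake identity and $\Pr(|Y|\wedge 1>t)=G(t)\ind_{\{t<1\}}$,
\[
\Ex(|Y|\wedge 1)^p=\Ex|Y|^p-p\int_1^\infty t^{p-1}G(t)\,\de t\ \ge\ 1-p\int_1^\infty t^{p-1}e^{-at}\,\de t,
\]
where $a:=-\log G(1)>p\log 12$.

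\textbf{The elementary integral.} Because $a>p\log 12>2p$, the function $t\mapsto t^{p-1}e^{-at/2}$ has its critical point $2(p-1)/a$ in $(0,1)$, hence is decreasing on $[1,\infty)$, so $t^{p-1}e^{-at/2}\le e^{-a/2}$ there. Therefore
\[
p\int_1^\infty t^{p-1}e^{-at}\,\de t\le pe^{-a/2}\int_1^\infty e^{-at/2}\,\de t=\frac{2p}{a}e^{-a}<\frac{2}{\log 12}\,12^{-p}<\frac1{144},
\]
using $a>p\log 12$ and $p\ge 2$. Combining with the previous display, $\Ex(|Y|\wedge 1)^p\ge 1-\frac1{144}>12^{-p}$, which finishes the proof. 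The only non-routine ingredient is the log-concavity of the tail $G$; everything else is a direct computation, and the role of the specific constant is simply to ensure $\log 12>2$, so that the exponential $e^{-at}$ dominates the polynomial $t^{p-1}$ on $[1,\infty)$.
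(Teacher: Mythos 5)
Your proof is correct, but it takes a genuinely different route from the paper's. The paper's argument is a two-line application of Paley--Zygmund: Lemma \ref{momgrow} gives $\|Y\|_{2p}\leq 2\|Y\|_p$, so
\[
\Pr\big(|Y|\geq 2^{-1/p}V\big)\geq \Pr\Big(|Y|^p\geq \tfrac12\Ex|Y|^p\Big)\geq \frac{(\Ex|Y|^p)^2}{4\Ex|Y|^{2p}}\geq \frac{1}{4\cdot 2^{2p}},
\]
and then $\Ex(|Y|\wedge V)^p\geq \tfrac12 V^p\Pr(|Y|\geq 2^{-1/p}V)\geq (V/12)^p$. You instead bypass Lemma \ref{momgrow} entirely, arguing from the log-concavity of the survival function $G(t)=\Pr(|Y|>t)$ (a consequence of Pr\'ekopa, as you note). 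After the easy case $G(1)\geq 12^{-p}$, you exploit $G(0)=1$ and concavity of $\log G$ to get $G(t)\leq G(1)^t$ for $t\geq 1$, and then bound the loss $p\int_1^\infty t^{p-1}G(t)\,\de t$ by a direct computation using $a:=-\log G(1)>p\log 12>2p$, which makes $t\mapsto t^{p-1}e^{-at/2}$ decreasing on $[1,\infty)$. The numbers check out: the integral is below $\frac{2}{\log 12}\,12^{-p}<1/144$, while $12^{-p}\leq 1/144$ for $p\geq 2$, so $1-1/144>12^{-p}$. What the paper's route buys is brevity, reusing the moment-growth lemma already proved; what your route buys is independence from that lemma and a tail estimate ($G(t)\leq G(1)^t$) that is useful in its own right. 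One very minor point of hygiene: a symmetric degenerate (point-mass) log-concave r.v.\ is necessarily $Y\equiv 0$, which is excluded by $\|Y\|_p\geq V>0$, so the degenerate case is vacuous rather than ``immediate''; the reduction to a density (hence $G(0)=1$) still stands.
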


\begin{proof}
By Lemma \ref{momgrow}, $\|Y\|_{2p}\leq 2\|Y\|_p$, hence  the Paley-Zygmund inequality yields
\[
\Pr(|Y|\geq 2^{-1/p}V)\geq \Pr\Big(|Y|^p\geq \frac{1}{2}\Ex|Y|^p\Big)\geq \frac{(\Ex|Y|^p)^2}{4\Ex|Y|^{2p}}\geq
\frac{1}{4\cdot 2^{2p}}
\]
and 
\[
\Ex(|Y|\wedge V)^p\geq \frac{1}{2}V^p\Pr(|Y|\geq 2^{-1/p}V)\geq \frac{1}{8\cdot 4^p}V^p\geq 
\Big(\frac{V}{12}\Big)^p.
\]
\end{proof}

We are now ready to state a lower bound for suprema of coordinates of isotropic log-concave vectors.

\begin{prop}
\label{supcoord}
Let $X$ be an isotropic log-concave random vector in $\er^d$. Suppose that 
$p\geq 2$, $d\geq e^p-1$ and $\|a_iX_i\|_p\geq V$ for $1\leq i\leq d$. Then 
\[
\Ex\max\{|a_1X_1|,\ldots,|a_dX_d|\}\geq\frac{1}{C_1}V.
\]
\end{prop}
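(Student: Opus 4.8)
The plan is to pass to the truncated coordinate process, apply the concentration estimate of Proposition~\ref{conc} together with the one‑dimensional small‑ball bound of Lemma~\ref{lpwedge}, and exploit the hypothesis $d\ge e^p-1$ to absorb the resulting error. Since $X$ is isotropic, $\|X_i\|_2=1$, so for $p\ge 2$ one has $|a_i|=\|a_iX_i\|_2\le\|a_iX_i\|_p$ and, by Lemma~\ref{momgrow}, $|a_i|=\|a_iX_i\|_p/\|X_i\|_p\ge V/(Cp)$. Put $M:=\max_{1\le i\le d}|a_iX_i|$. Since $\|M\|_p\ge\|a_iX_i\|_p\ge V$ for every $i$, it suffices to bound $\|M\|_p$ from above by $2\Ex M$ plus a term not exceeding $V/2$; then $V\le 2\Ex M+V/2$ and we are done with $C_1=4$. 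Because the upper tail of $M$ is unbounded, one actually works with a truncated version.

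Fix a subset $J\subseteq\{1,\dots,d\}$ and a level $W\ge V$, to be chosen; set $T:=\{a_ie_i:i\in J\}$, so that $\beta(T)=\max_{i\in J}|a_i|$, and $S:=\bigl(\sum_{i\in J}(|a_iX_i|\wedge W)^p\bigr)^{1/p}$. By Lemma~\ref{lpwedge}, $\Ex(|a_iX_i|\wedge W)^p\ge\Ex(|a_iX_i|\wedge V)^p\ge(V/12)^p$, hence $(\Ex S^p)^{1/p}\ge|J|^{1/p}V/12$; on the other hand $S\le|J|^{1/p}\max_{i\in J}(|a_iX_i|\wedge W)\le|J|^{1/p}M$, so $\Ex S\le|J|^{1/p}\Ex M$. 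Applying Proposition~\ref{conc} (with $\alpha\le Cd^{1/3}\log^{1/2}(d+1)$ by Eldan's bound) therefore yields
\[
|J|^{1/p}\frac{V}{12}\le(\Ex S^p)^{1/p}\le 2\Ex S+2^{1/p}W^{(p-2)/p}\bigl(\alpha\beta(T)\bigr)^{2/p}\le 2|J|^{1/p}\Ex M+2^{1/p}W^{(p-2)/p}\bigl(\alpha\beta(T)\bigr)^{2/p},
\]
and dividing by $|J|^{1/p}$ reduces the proposition to producing $J$ and $W$ for which $W^{(p-2)/p}(\alpha\beta(T))^{2/p}$ is at most a small universal multiple of $V|J|^{1/p}$ (or, after the reduction below, of $2^kV|J|^{1/p}$).

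Here is where $d\ge e^p-1$ enters. If $p$ is below a universal constant, Remark~\ref{smallp} already gives $\Ex M\ge\Ex|a_iX_i|\ge\frac{\sqrt2}{p}V\ge V/C_1$, so assume $p$ large. If some coordinate has $\|a_iX_i\|_p\ge pV/C_1$, then again $\Ex M\ge\frac{\sqrt2}{p}\|a_iX_i\|_p\ge V/C_1$; otherwise all of the numbers $\|a_iX_i\|_p$ lie in $[V,pV/C_1)$, hence in at most $O(\log p)$ dyadic blocks $[2^jV,2^{j+1}V)$, so one of them, say at level $k$, contains at least $d/O(\log p)\ge c\,e^p/\log p$ of the coordinates. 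Take $J$ to be this block and $W:=2^{k+1}V$, so that $\beta(T)=\max_{i\in J}|a_i|<2^{k+1}V\asymp 2^kV$ and $|J|^{1/p}\ge c'e(\log p)^{-1/p}$, a universal constant of order $e$; moreover the lower bound on $S$ improves to $(\Ex S^p)^{1/p}\ge|J|^{1/p}2^kV/12$. The factor $2^k$ then cancels, and since $p\le\log(d+1)$ one checks that $\alpha^{2/p}|J|^{-1/p}$ is $O(1)$ when $d\asymp e^p$ and tends to $0$ when $d\gg e^p$; this absorbs the error and gives $\Ex M\ge c''2^kV\ge V/C_1$.

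The delicate point is exactly this absorption. The concentration error in Proposition~\ref{conc} is of order $W\cdot(\alpha\beta(T))^{2/p}$, whereas $d\ge e^p-1$ buys only a factor of order $e$ in $|J|^{1/p}$, so one cannot tolerate any loss equal to a nontrivial constant raised to the power $p$: the crude per‑coordinate constant $1/12$ of Lemma~\ref{lpwedge} combined with the truncation level entering the error to the first power is \emph{a priori} too lossy, and one must choose $W$ within a constant factor of the common magnitude $2^kV$ of the selected block, estimate $\beta(T)=\max_{i\in J}|a_i|$ through $W$ rather than through $\max_i\|a_iX_i\|_p$, and play the small‑ball input off against the sub‑polynomial bound on $\alpha$ so that $W^{(p-2)/p}(\alpha\beta(T))^{2/p}$ stays below a universal multiple of $2^kV|J|^{1/p}$. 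Getting this interplay right — rather than any individual inequality — is the heart of the argument.
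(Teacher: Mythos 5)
Your overall architecture (Proposition~\ref{conc} plus Lemma~\ref{lpwedge} plus Eldan's bound on the concentration constant, then absorb the error using $d\ge e^p-1$) is the paper's architecture, and you correctly identify where the tension lies. But the resolution you propose does not close the gap. You need the ratio $\alpha^{2/p}|J|^{-1/p}$ to be a \emph{small} universal constant (something like $1/48$), because the inequality you derive reads $\tfrac{1}{12}2^kV\le 2\,\Ex M+C\,2^kV\cdot\alpha^{2/p}|J|^{-1/p}$, and the error term must be dominated by, say, half of the left side. With Eldan's bound $\alpha\le Cd^{1/3}\log^{1/2}(d+1)$ and your block size $|J|\gtrsim d/\log p$, at $d\asymp e^p$ one gets $\alpha^{2/p}|J|^{-1/p}\approx d^{-1/(3p)}\cdot(\text{polylog})^{1/p}\approx e^{-1/3}\approx 0.72$. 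You yourself observe this is only ``$O(1)$'' at $d\asymp e^p$, and then assert ``this absorbs the error'' — but a factor $0.72$ does not absorb anything when you need it below roughly $1/48$. This is the genuine gap: the numerology fails precisely at the boundary $d\asymp e^p$ that the hypothesis permits.

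The paper's fix is a preliminary rescaling you are missing. Since Lemma~\ref{momgrow} gives $\|a_iX_i\|_{\lambda p}\ge\lambda\|a_iX_i\|_p\ge\lambda V$ for $\lambda\in(0,1]$, one may replace $(p,V)$ by $(p/(8\ln 24),\,V/(8\ln 24))$; the hypothesis $d\ge e^p-1\ge e^{p/2}$ then becomes $d\ge 24^{4p'}$, i.e.\ $d^{1/(4p')}\ge 24$, which is exactly the margin the absorption step requires. The paper also avoids your dyadic block decomposition altogether: normalizing $\max_i|a_i|=1$ (a harmless rescaling of $a$ and $V$) makes $\beta(T)=1$ for $T=\{a_ie_i:i\le d\}$, and then $\Ex\max_i|a_iX_i|\ge\min_i\Ex|X_i|\ge 1/\sqrt2$ lets one assume $V\ge 2$, so $V^{(p-2)/p}(\alpha\beta(T))^{2/p}\le V\alpha^{2/p}$ with no truncation level to tune and no $\log p$ loss in the cardinality. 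If you add the rescaling step, your argument becomes correct but is needlessly elaborate; the normalization $\max_i|a_i|=1$ is cleaner and removes the need for the blocks entirely.
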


\begin{proof}
Symmetrization argument as in Lemma \ref{red_to_symm} shows that we may additionally assume
that $X$ is symmetric. By Lemma \ref{momgrow}
\[
\Ex\max_{i}|a_iX_i|\geq \max_{i}\|a_iX_i\|_1\geq \frac{\sqrt{2}}{p}\max_{i}\|a_iX_i\|_p
\geq \frac{\sqrt{2}}{p}V,
\]
so we may assume that $p$ (and therefore also $d$) is sufficiently large. 
Since $e^{p}-1\geq e^{p/2}$ and by Lemma \ref{momgrow}, 
$\|a_iX_i\|_{\lambda p}\geq \lambda\|a_iX_i\|_p\geq \lambda V$ for $\lambda\in (0,1]$ and $p\geq 2/\lambda$,  
we may assume (changing $p$ and to $p/8\ln(24)$  and $V$ to $V/8\ln(24)$) that $d\geq 24^{4p}$. 

Since it is only a matter of normalization of coefficients $a_i$ and the number $V$ we may and will assume that 
$\max_i|a_i|=1$. But then by Lemma \ref{momgrow} 
\[
\Ex\max\{|a_1X_1|,\ldots,|a_dX_d|\}\geq \max_{i}\Ex|a_iX_i|\geq \min_i\Ex|X_i|\geq \frac{1}{\sqrt{2}},
\]
so it is enough to consider the case $V\geq 2$.

By the result of Eldan \cite{El} $X$ satisfies exponential concentration with constant at most
$d^{1/2-1/8}$ (recall that we assume that $d$ is sufficiently large). Let 
$T:=\{a_ie_i\colon i\leq d\}\subset \er^d$ then
\[
\beta(T)^2=\sup_{|x|=1}\sum_{i=1}^d|a_ix_i|^2=\max_{i}a_i^2=1,
\] 
hence Proposition \ref{conc} yields (recall that $V\geq 2$)
\[
\Big(\Ex\sum_{i=1}^d(|a_iX_i|\wedge V)^p\Big)^{1/p}
\leq 2\Ex\Big(\sum_{i=1}^d(|a_iX_i|\wedge V)^p\Big)^{1/p}+Vd^{1/p-1/(4p)}.
\]
We have 
\[
\Ex\Big(\sum_{i=1}^d(|a_iX_i|\wedge V)^p\Big)^{1/p}\leq d^{1/p}\Ex\max_{1\leq i\leq d}|a_iX_i|.
\]
By Lemma \ref{lpwedge} we know that $\Ex(|a_iX_i|\wedge V)^p\geq (V/12)^p$, therefore
\[
\Big(\Ex\sum_{i=1}^d(|a_iX_i|\wedge V)^p\Big)^{1/p}\geq \frac{1}{12}Vd^{1/p}.
\]
Thus
\[
\frac{1}{12}Vd^{1/p}\leq 2 d^{1/p}\Ex\max_{1\leq i\leq d}|a_iX_i|+Vd^{1/p-1/(4p)}.
\]
However $d^{1/(4p)}\geq 24$ and we get
\[
\Ex\max_{1\leq i\leq d}|a_iX_i|\geq \frac{1}{48}V.
\]
\end{proof}

As a corollary we show that Conjecture \ref{Sudmin} holds for sets $T$ such that r.v's $(\langle t,X\rangle)_{t\in T}$
are uncorellated.

\begin{cor}
\label{sudminuncorr}
Suppose that $X$ is a $d$-dimensional log-concave random vector, $p\geq 2$, $T\subset \er^d$
satisfies \eqref{sudass} and $\Cov(\langle t,X\rangle,\langle s,X\rangle)=0$ for $s,t\in T$ with $s\neq t$.
Then \eqref{sudconcl} holds with a universal constant $\kappa$ provided that $|T|\geq e^p$.
\end{cor}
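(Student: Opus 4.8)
The plan is to reduce the statement of Corollary \ref{sudminuncorr} to Proposition \ref{supcoord} by a change of variables that turns the uncorrelated increments of $X$ into genuine coordinates of an isotropic log-concave vector. First I would use Lemma \ref{red_to_symm} to pass to the symmetric case (paying only a constant factor), so that we may assume $X$ is symmetric log-concave, and fix $T=\{t^1,\dots,t^N\}$ with $N=|T|\geq e^p$ satisfying \eqref{sudass}. Writing $u^j := t^j - t^1$ for $j=2,\dots,N$, the hypothesis gives $\Cov(\langle u^j,X\rangle,\langle u^k,X\rangle)=0$ for $j\neq k$, and $\|\langle u^j,X\rangle\|_p \geq A$ for all $j\neq k$ as well (for $j,k\geq 2$ use that $u^j-u^k=t^j-t^k$ and for the case involving $u^j$ alone use $t^j-t^1$). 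Thus the random variables $Y_j:=\langle u^j,X\rangle$, $j=2,\dots,N$, are uncorrelated, symmetric, and each has $L_p$-norm at least $A$.

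Next I would normalize. Let $W_j := Y_j/\sqrt{\Ex Y_j^2}$, so the $W_j$ are uncorrelated with variance one; the vector $W=(W_2,\dots,W_N)$ is a linear image of $X$, hence log-concave and symmetric, and it is isotropic in the $(N-1)$-dimensional space it spans (uncorrelated plus variance one is exactly isotropy). Then $Y_j = \sigma_j W_j$ with $\sigma_j=\sqrt{\Ex Y_j^2}$, and $\|\sigma_j W_j\|_p = \|Y_j\|_p \geq A$. Since $N-1\geq e^p-1$, Proposition \ref{supcoord} applies to the isotropic log-concave vector $W$ with coefficients $a_j=\sigma_j$ and $V=A$, giving
\[
\Ex\max_{2\leq j\leq N}|\sigma_j W_j| = \Ex\max_{2\leq j\leq N}|Y_j| \geq \frac{A}{C_1}.
\]

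Finally I would translate this back: since $Y_j = \langle t^j - t^1, X\rangle$ and by symmetry of $X$ the variables $Y_j$ and $-Y_j=\langle t^1-t^j,X\rangle$ have the same distribution, one has
\[
\Ex\sup_{t,s\in T}\langle t-s,X\rangle \geq \Ex\max_{2\leq j\leq N}\langle t^j-t^1,X\rangle = \Ex\max_{2\leq j\leq N}|Y_j|_{+}\ ,
\]
and a standard symmetrization/desymmetrization argument (the maximum of symmetric variables controls the maximum of their absolute values up to a factor, e.g.\ $\Ex\max_j|Y_j| \leq 2\Ex\max_j (Y_j)_+ + \max_j\|Y_j\|_1$ is too crude, so instead I would note $\Ex\max_j(Y_j)_+ \geq \frac12\Ex\max_j |Y_j|$ when the $Y_j$ are symmetric and $N\geq 2$, splitting on the sign of the index achieving the max and using independence of sign from magnitude in the symmetric case) yields $\Ex\sup_{t,s\in T}\langle t-s,X\rangle \geq A/(2C_1)$. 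Adjusting the constant for the earlier symmetrization step from Lemma \ref{red_to_symm} completes the proof. The one point demanding care is the desymmetrization: one must be slightly careful because the $Y_j$ are only pairwise uncorrelated, not independent, but since each $Y_j$ is individually symmetric, conditioning on $|Y_j|$ for the maximizing $j$ and using that its sign is a fair coin gives the factor $\tfrac12$ cleanly; this is the only genuinely non-mechanical step, and the bulk of the work is simply invoking Proposition \ref{supcoord} after the correct normalization.
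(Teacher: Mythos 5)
Your reduction breaks at the very first step: the shifted variables $Y_j=\langle t^j-t^1,X\rangle$ are \emph{not} pairwise uncorrelated. Indeed, for $j\neq k$ (both $\geq 2$),
\begin{align*}
\Cov\big(\langle t^j-t^1,X\rangle,\langle t^k-t^1,X\rangle\big)
&=\Cov(\langle t^j,X\rangle,\langle t^k,X\rangle)
-\Cov(\langle t^j,X\rangle,\langle t^1,X\rangle)\\
&\phantom{=\ }-\Cov(\langle t^1,X\rangle,\langle t^k,X\rangle)
+\mathrm{Var}(\langle t^1,X\rangle)\\
&=\mathrm{Var}(\langle t^1,X\rangle),
\end{align*}
which is nonzero unless $\langle t^1,X\rangle\equiv 0$. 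Consequently the normalized vector $W=(Y_j/\sigma_j)_j$ is not isotropic and Proposition \ref{supcoord} does not apply to it; the whole argument collapses here.

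The paper takes a route that avoids the shift entirely. By the triangle inequality, at most one $t\in T$ can have $\|\langle t,X\rangle\|_p<A/2$ (two such points $t,s$ would give $\|\langle t-s,X\rangle\|_p<A$), so one can pick $t_1,\dots,t_n\in T$ with $n\geq |T|-1\geq e^p-1$ and $\|\langle t_i,X\rangle\|_p\geq A/2$. These $\langle t_i,X\rangle$ \emph{are} pairwise uncorrelated (they are unshifted), so Proposition \ref{supcoord} applies with $V=A/2$, $a_i=\|\langle t_i,X\rangle\|_2$, to give $\Ex\max_{t\in T}|\langle t,X\rangle|\geq A/(2C_1)$. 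The subtraction of a base point is then handled \emph{after} this estimate, via $\Ex\max_{t,s}\langle t-s,X\rangle\geq \Ex\max_t|\langle t,X\rangle|-\Ex|\langle t_0,X\rangle|$, and in the case $\Ex|\langle t_0,X\rangle|$ is large one uses the uncorrelation hypothesis again (Pythagoras in $L_2$ plus Lemma \ref{momgrow}) to conclude directly. If you want to repair your argument you should adopt this ordering: select the good subset first, apply Proposition \ref{supcoord} to the raw $\langle t_i,X\rangle$, and only then deal with the anchoring issue.

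As a minor remark, your final desymmetrization is stated in a needlessly delicate way. Since $X\stackrel{d}{=}-X$ the family $(Y_j)_j$ is jointly symmetric, hence $\Ex\max_j|Y_j|\leq \Ex(\max_j Y_j)_+ + \Ex(\max_j(-Y_j))_+ = 2\Ex\max_j(Y_j)_+$ without any conditioning; but this point is correct in spirit and not the source of the gap.
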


\begin{proof}
Using symmetrization argument as in the proof of Lemma \ref{red_to_symm} we may assume that
$X$ is symmetric.
 
Since $\|\langle t-s,X\rangle\|_p\leq \|\langle t,X\rangle\|_p+\|\langle s,X\rangle\|_p$, 
there exist $t_1,\ldots,t_n\subset T$ with $n\geq |T|-1\geq e^p-1$ such that 
$\|\langle t_i,X\rangle\|_p\geq A/2$ for all $i$.
Proposition \ref{supcoord} applied with $V=A/2$, $a_i:=\|\langle t_i,X\rangle\|_2$ and $n$-dimensional
isotropic vector $Y=(\langle t_i,X\rangle/a_i)_{i\leq n}$ gives
\[
\Ex\max_{t\in T}|\langle t,X\rangle|\geq \Ex\max_{i}|\langle t_i,X\rangle|=\Ex\max_i|a_iY_i|\geq \frac{1}{2C_1}A.
\] 
Notice that for any $t_0\in T$ we have
\[
\Ex\max_{t,s\in T}\langle t-s,X\rangle\geq \Ex\max_{t\in T}|\langle t-t_0,X\rangle|\geq 
\Ex\max_{t\in T}|\langle t,X\rangle|-\Ex|\langle t_0,X\rangle|.
\]
If $\Ex|\langle t_0,X\rangle|\leq A/(4C_1)$ we are done, otherwise we may assume that $T\ni t_1\neq t_0$ and get
by Lemma \ref{momgrow}
\begin{align*}
\Ex\max_{t,s\in T}\langle t-s,X\rangle&\geq \Ex|\langle t_1-t_0,X\rangle|
\geq \frac{1}{\sqrt{2}}\|\langle t_1-t_0,X\rangle\|_2\geq \frac{1}{\sqrt{2}}\|\langle t_0,X\rangle\|_2
\\
&\geq \frac{1}{\sqrt{2}}\Ex|\langle t_0,X\rangle| \geq \frac{1}{4\sqrt{2}C_1}A,
\end{align*}
where the third inequality follows since $\Cov(\langle t_0,X\rangle,\langle t_1,X\rangle)=0$.
\end{proof}

Before we formulate next consequence of Proposition \ref{supcoord} we show a simple decomposition lemma.

\begin{lem}
\label{ortdec}
Let $r>0$, $\ve\in [0,1)$ and $T\subset rB_2^d$ satisfy $|T|\geq (\frac{2}{\ve}+1)^n$.
Then we can find vectors $t_k,s_k\in T$, $v_k,u_k\in \er^d$, $k=1,\ldots,n$ such that  
$0\neq t_{k}-s_{k}=u_k+v_k$, $|v_k|\leq \ve r$ for all $k$ and vectors $u_1,\ldots,u_n$ are orthogonal. 
\end{lem}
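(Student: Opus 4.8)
The plan is to avoid an iterative construction (which, as I note at the end, loses the exact constant) and instead run an extremal argument. Call a tuple $(t_k,s_k,u_k,v_k)_{k=1}^{m}$ with $t_k,s_k\in T$ and $u_k,v_k\in\er^d$ an \emph{admissible configuration of length $m$} if $0\neq t_k-s_k=u_k+v_k$ and $|v_k|\le\ve r$ for each $k\le m$ and the vectors $u_1,\dots,u_m$ are pairwise orthogonal. Such configurations exist (the empty one has length $0$) and truncation of an admissible configuration is again admissible, so I may let $m$ be the largest element of $\{0,1,\dots,n\}$ for which an admissible configuration of that length exists. If $m=n$ the lemma follows at once, so the whole proof reduces to ruling out $m<n$. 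Assume $m<n$, fix an admissible configuration of length $m$, set $F:=\mathrm{span}(u_1,\dots,u_m)$ and let $P$ be the orthogonal projection onto $F$.

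The crux is that maximality of $m$ forces $P$ to separate the points of $T$ strictly by more than $\ve r$. Indeed, if there were distinct $t,s\in T$ with $|P(t-s)|\le\ve r$, then appending the tuple $\bigl(t,\,s,\,(I-P)(t-s),\,P(t-s)\bigr)$ would give an admissible configuration of length $m+1\le n$: the new difference $t-s$ is nonzero and splits as $(I-P)(t-s)+P(t-s)$, the new $v$ has norm $|P(t-s)|\le\ve r$, and the new $u=(I-P)(t-s)$ lies in $F^{\perp}$, hence is orthogonal to $u_1,\dots,u_m$ — and it is irrelevant whether this new $u$ happens to vanish, since the lemma only requires the $u_k$'s to be pairwise orthogonal. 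This contradicts the maximality of $m$. Hence $|P(t-s)|>\ve r$ for all distinct $t,s\in T$; in particular $P$ is injective on $T$, so $|P(T)|=|T|$.

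Now $P(T)$ is a subset of $F$, it is strictly $\ve r$-separated, and $|Px|\le|x|\le r$ for $x\in T$, so $P(T)$ lies in the ball of radius $r$ of the $\ell$-dimensional space $F$ with $\ell=\dim F\le m\le n-1$. A routine volumetric packing bound (disjointness of the closed balls of radius $\ve r/2$ centred at the points of $P(T)$, all contained in the ball of radius $r+\ve r/2$) then yields
\[
|T|=|P(T)|\le\Bigl(\frac{r+\ve r/2}{\ve r/2}\Bigr)^{\ell}=\Bigl(\frac{2}{\ve}+1\Bigr)^{\ell}\le\Bigl(\frac{2}{\ve}+1\Bigr)^{n-1}<\Bigl(\frac{2}{\ve}+1\Bigr)^{n}\le|T|,
\]
which is absurd (one may assume $\ve>0$, as $\ve=0$ makes the hypothesis empty). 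Therefore $m=n$, and an admissible configuration of length $n$ is precisely the conclusion of the lemma.

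The argument has no genuinely hard step once the extremal formulation is in place; the only things to be careful with are that admissible configurations truncate (so that the maximal length is well defined) and that the packing estimate is applied with the sharp constant — a strictly $\ve r$-separated subset of a radius-$r$ ball in an $\ell$-dimensional space has at most $(2/\ve+1)^{\ell}$ points, which is exactly what is needed. It is worth stressing why the obvious inductive/greedy approach — peel off one orthogonal direction at a time, pigeonholing $T$ into a thin slab perpendicular to it — does not deliver the stated bound: the errors $v_k$ would accumulate in $\ell_2$ over the successively peeled directions, costing an extra dimension-dependent factor. The extremal argument bypasses this accumulation completely.
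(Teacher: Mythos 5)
Your argument is correct and coincides with the paper's proof modulo packaging: the paper constructs the $(t_k,s_k,u_k,v_k)$ by induction, at stage $l+1$ setting $E:=\mathrm{Lin}(u_1,\dots,u_l)$ (of dimension $\le l\le n-1$), applying your volumetric packing bound to the projections $P_E(T)$ inside the radius-$r$ ball of $E$ to find distinct $t_{l+1},s_{l+1}\in T$ with $|P_E(t_{l+1}-s_{l+1})|\le\ve r$, and then taking $v_{l+1}:=P_E(t_{l+1}-s_{l+1})$ and $u_{l+1}:=(t_{l+1}-s_{l+1})-v_{l+1}$. Unwinding your maximality argument gives exactly this induction, so the extremal formulation buys clarity of exposition but not a new idea. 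Your closing remark therefore misattributes the credit: the accumulation you fear is avoided not by the extremal set-up but because the packing bound is applied in the full $\dim E$-dimensional span to the original $T$ at every step, so each $|v_k|$ is a single projection of norm $\le\ve r$ and nothing adds up. The paper's induction has this feature; what would lose a dimension-dependent factor is the \emph{shrinking} variant you describe --- restricting $T$ to ever-thinner slabs, one direction at a time --- and neither you nor the paper uses it.
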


\begin{proof}
We proceed by an induction. We choose for $t_1,s_1$ any two distinct vectors in $T$ and set $u_1:=t_1-s_1$ and 
$v_1:=0$. Suppose that $1\leq l\leq n-1$ and vectors $t_k,s_k,u_k,v_k$ are chosen
for $1\leq k\leq l$. Define $E:=\mathrm{Lin}(u_1,\ldots,u_l)$ then $\dim E\leq l\leq n-1$.
Since in the ball in $E$ of radius $r$ there are at most $(\frac{2}{\ve}+1)^{\dim E}<|T|$ points with mutual distances
at least $\ve r$ there exist distinct vectors $t_{l+1},s_{l+1}$ in $T$ such that $|P_E(t_{l+1}-s_{l+1})|\leq \ve r$, 
where $P_E$ denotes the orthogonal projection onto $E$. We put $v_{l+1}:=P_E(t_{l+1}-s_{l+1})$ and 
$u_{l+1}:=t_{l+1}-s_{l+1}-v_{l+1}$.
\end{proof}

Next theorem is a weaker form of Conjecture \ref{Sudmin}.

\begin{thm}
Let $X$ be a log-concave vector, $p\geq 1$ and $T\subset \er^d$ be such that $|T|\geq e^{e^p}$ and 
$\|\langle t-s,X\rangle\|_p\geq A$ for all distinct $t,s\in T$. Then 
\[
\Ex\sup_{t,s\in T}\langle t-s,X\rangle \geq \frac{1}{C}A.
\] 
\end{thm}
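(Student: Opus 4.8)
The plan is to reduce, via the decomposition Lemma~\ref{ortdec}, to a configuration with pairwise orthogonal (hence, by isotropy, uncorrelated) difference vectors, and then to apply Proposition~\ref{supcoord} exactly as in the proof of Corollary~\ref{sudminuncorr}. First I would carry out the routine reductions: the argument of Lemma~\ref{red_to_symm} applies verbatim with $e^{e^p}$ in place of $e^p$, so it suffices to treat symmetric log-concave $X$, and by Lemma~\ref{preslin} we may then assume $X$ is isotropic, so that $\|\langle t-s,X\rangle\|_2=|t-s|$ and, by Lemma~\ref{momgrow}, $|t-s|\le\|\langle t-s,X\rangle\|_q\le\frac q2|t-s|$ for all $q\ge2$. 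If $p$ is below a suitable absolute constant $p_0$ the conclusion is immediate from Remark~\ref{smallp} (which applies since $|T|\ge e^{e^p}\ge e^p$), so assume $p\ge p_0$. Since the quantities involved depend only on $T-T$, translate so that $0\in T$; if $\mathrm{diam}(T)\ge\sqrt2A$, choose $t_*,s_*\in T$ with $|t_*-s_*|\ge\sqrt2A$ and conclude from $\Ex\sup_{t,s\in T}\langle t-s,X\rangle\ge\Ex|\langle t_*-s_*,X\rangle|=\|\langle t_*-s_*,X\rangle\|_1\ge\frac1{\sqrt2}|t_*-s_*|\ge A$. Hence we may assume $T\subset DB_2^d$ with $D:=\mathrm{diam}(T)\in[\tfrac{2A}p,\sqrt2A)$, the lower bound being the Euclidean separation of $T$.

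Next I would fix a small absolute constant $c$ (how small to be decided by $C_1$), set $\ve:=cA/(pD)\in(0,1)$, so that $\ve D=cA/p$ and $2/\ve+1\le C_6p$ with $C_6:=4\sqrt2/c$, and set $p':=p-\log\bigl(2\log(C_6p)\bigr)$ and $n:=\lceil e^{p'}\rceil$. For $p\ge p_0$ one checks $p'\in[p/2,p]$, and since $n\le2e^{p'}=e^p/\log(C_6p)$ we get $n\log(2/\ve+1)\le e^p\le\log|T|$, i.e.\ $(2/\ve+1)^n\le|T|$. Applying Lemma~\ref{ortdec} with $r=D$ produces $t_k,s_k\in T$ and $u_k,v_k\in\er^d$, $k\le n$, with $0\ne t_k-s_k=u_k+v_k$, the $u_k$ pairwise orthogonal, and $|v_k|\le\ve D=cA/p$.

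Then I would run the estimate. By Lemma~\ref{momgrow} and isotropy, $\|\langle v_k,X\rangle\|_{p'}\le\frac{p'}2|v_k|\le\frac{cp'A}{2p}$ and $\|\langle t_k-s_k,X\rangle\|_{p'}\ge\frac{p'}p\|\langle t_k-s_k,X\rangle\|_p\ge\frac{p'A}p$, hence $\|\langle u_k,X\rangle\|_{p'}\ge\frac{p'A}{2p}=:V$; in particular $u_k\ne0$, so with $a_k:=|u_k|$ the vector $(\langle u_k,X\rangle/a_k)_{k\le n}$ is an $n$-dimensional isotropic log-concave vector. As $n\ge e^{p'}-1$ and $p'\ge2$, Proposition~\ref{supcoord} gives $\Ex\max_{k\le n}|\langle u_k,X\rangle|\ge V/C_1$, while $\Ex\max_{k\le n}|\langle v_k,X\rangle|\le\bigl(\sum_k\|\langle v_k,X\rangle\|_{p'}^{p'}\bigr)^{1/p'}\le n^{1/p'}\frac{cp'A}{2p}\le 2e\cdot\frac{cp'A}{2p}\le ecA$, which is at most $V/(100C_1)$ once $c\le 1/(400eC_1)$, since $V\ge A/4$. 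Using that both $(t_k,s_k)$ and $(s_k,t_k)$ are admissible pairs,
\[
\Ex\sup_{t,s\in T}\langle t-s,X\rangle\ge\Ex\max_{k\le n}|\langle t_k-s_k,X\rangle|\ge\Ex\max_{k\le n}|\langle u_k,X\rangle|-\Ex\max_{k\le n}|\langle v_k,X\rangle|\ge\frac{V}{C_1}-\frac{V}{100C_1}\ge\frac{A}{C}.
\]

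The main obstacle is the bookkeeping reconciling three competing demands: the perturbations $|v_k|\le\ve D$ must be shrunk by a factor depending on $C_1$ (which inflates $2/\ve+1$ by a constant and also covers the factor $n^{1/p'}\le2e$ in the last max-estimate), yet one still needs $n\ge e^{p'}-1$ with $p'$ comparable to $p$ so that $V=\frac{p'A}{2p}\gtrsim A$. This is precisely what the doubly exponential hypothesis $|T|\ge e^{e^p}$ provides: it leaves room for $(C_6p)^{n}\le|T|$ with $n$ of order $e^p/\log(C_6p)$, which is still $\ge e^{p'}$ for $p'=p-\log(2\log(C_6p))\ge p/2$. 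The preliminary reduction to $\mathrm{diam}(T)<\sqrt2A$ is what keeps $2/\ve+1$ of order $p$ rather than of order $p\cdot\mathrm{diam}(T)/A$, which would break the cardinality count.
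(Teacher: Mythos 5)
Your proof is correct and follows essentially the same route as the paper's: reduce to symmetric isotropic $X$ with $0\in T$ and small diameter, apply Lemma~\ref{ortdec} to extract $n$ near-orthogonal differences $t_k-s_k=u_k+v_k$, control the perturbations $v_k$ via the $L_{p'}$-triangle inequality, and finish with Proposition~\ref{supcoord} applied to the orthogonal (hence, by isotropy, independent-in-covariance) $u_k$'s. The only deviation is cosmetic bookkeeping: the paper rescales both $p$ and $A$ multiplicatively by $1/(8eC_1)$ and takes $r=A'$ in Lemma~\ref{ortdec}, whereas you keep $A$, take $r=\mathrm{diam}(T)$, and absorb the cardinality count with an additive $\log\log$ correction to $p$ — both choices serve the same purpose of making $(2/\ve+1)^n\le e^{e^p}$ while keeping $p'\asymp p$.
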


\begin{proof}
Arguments as in the proofs of Lemmas \ref{red_to_symm} and \ref{preslin} show that it is enough to consider
only symmetric and isotropic vectors $X$.
Since the statement is translation invariant w.l.o.g. $0\in T$.

Let $C_1$ be as in Proposition \ref{supcoord}, we may obviously assume that $C_1\geq 1$.
By Remark \ref{smallp} it is enough to consider the case $p\geq 16eC_1$. Put
$p':=p/(8eC_1)$ and $A':=A/(8eC_1)$, Lemma \ref{momgrow} yields
$\|\langle t-s,X\rangle\|_{p'}\geq A'$ for any distinct vectors $s,t\in T$. Moreover
\[
(1+4eC_1p')^{e^{p'}}\leq (e^{p/2})^{e^{p/2}}\leq e^{e^p}\leq |T|.
\]

Since $0\in T$ we have (using again Lemma \ref{momgrow})
\[
\Ex\sup_{t,s\in T}\langle t-s,X\rangle \geq \sup_{t\in T}\Ex|\langle t,X\rangle|\geq
\frac{1}{\sqrt{2}}\sup_{t\in T}\|\langle t,X\rangle\|_2=\frac{1}{\sqrt{2}}\sup_{t\in T}|t|.
\]
Thus we may assume that $T\subset A'B_2^d$. Let $n:=e^{p'}$ and $\ve:=\frac{1}{2eC_1p'}$, then
$|T|\geq (\frac{2}{\ve}+1)^n$. Therefore we may apply Lemma \ref{ortdec} to the set $T$  with $r=A'$
and $\ve$ , $n$ as above and get points $t_k,s_k,u_k$ and $v_k$. We have
\begin{align*}
\Ex\sup_{t,s\in T}\langle t-s,X\rangle&\geq
\Ex\max_{k\leq n}|\langle t_k-s_k,X\rangle|
=\Ex\max_{k\leq n}|\langle u_k+v_k,X\rangle|
\\
&\geq \Ex\max_{k\leq n}|\langle u_k,X\rangle|-\Ex\max_{k\leq n}|\langle v_k,X\rangle|.
\end{align*}

Notice that 
\begin{align*}
\Ex\max_{k\leq n}|\langle v_k,X\rangle|&\leq \Big(\Ex\sum_{k\leq n}|\langle v_k,X\rangle|^{p'}\Big)^{1/p'}
\leq n^{1/p'}\max_{k}\|\langle v_k,X\rangle\|_{p'}
\\
&\leq e\frac{p'}{2}\max_{k}\|\langle v_k,X\rangle\|_{2}=\frac{ep'}{2}\max_{k\leq n}|v_k|
\leq \frac{ep'}{2}\ve A'= \frac{1}{4C_1}A',
\end{align*}
where the third inequality follows by Lemma \ref{momgrow}.
Moreover for any $k$,
\[
\|\langle u_k,X\rangle\|_{p'}\geq \|\langle t_k-s_k,X\rangle\|_{p'}-\|\langle v_k,X\rangle\|_{p'}\geq A'-\frac{p'}{2}\ve A'
\geq  \frac{A'}{2}.
\]
Since $u_k$ are orthogonal and $X$ is isotropic Proposition \ref{supcoord} (applied with $p=p'$, $V=A'/2$, 
$a_k:=|u_k|$ and $n$-dimensional isotropic vector $Y=(\langle u_k,X\rangle/a_k)_{k\leq n}$) yields
\[
\Ex\sup_{k\leq n}|\langle u_k,X\rangle|\geq \frac{1}{2C_1}A'.
\]
Hence 
\[
\Ex\sup_{t,s\in T}\langle t-s,X\rangle \geq \frac{1}{2C_1}A'-\frac{1}{4C_1}A'=\frac{1}{4C_1}A'.
\]
\end{proof}

\begin{rem}
As in Remark \ref{equivcov} we may reformulate the above result in terms of covering numbers -- for any log-concave
vector $X$, any nonempty $T\subset \er^d$ and $A>0$,
\[
\Ex\sup_{t,s\in T}\langle t-s,X\rangle \geq 
\frac{1}{C}\sup_{p\geq 1}\min\Big\{\frac{A}{p}\log(1+\log N(T,d_{X,p},A)),A\Big\}.
\]
\end{rem}

It is an open problem, cf. \cite{La2}, whether weak and strong moments of log-concave vectors are comparable, 
i.e. whether for $p\geq 1$, log-concave $d$-dimensional vectors $X$ and any norm $\|\ \|$ on $\er^d$,
\[
(\Ex\|X\|^p)^{1/p}\leq C\Big(\Ex\|X\|+\sup_{\|t\|_*\leq 1}\big(\Ex|\langle t,X\rangle|^p\big)^{1/p}\Big).
\]
The next result shows that this is the case for weighted $l_{\infty}^d$-norms of isotropic vectors.

\begin{cor}
Let $X$ be an isotropic $d$-dimensional random vector. Then for any numbers $a_1,\ldots,a_d$ and any $p\geq 1$,
\[
\Big(\Ex\max_{i\leq d}|a_iX_i|^p\Big)^{1/p}\leq 
C\Big(\Ex\max_{i\leq d}|a_iX_i|+\max_{i\leq d}\Big(\Ex|a_iX_i|^p\Big)^{1/p}\Big).
\]
\end{cor}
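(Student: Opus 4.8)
The plan is to reduce the statement to the Sudakov-type estimate (the last Remark / the theorem of this section) applied to a suitable index set. Fix $p\geq 1$; by homogeneity we may assume $\max_i(\Ex|a_iX_i|^p)^{1/p}\leq 1$, so each coordinate $Y_i:=a_iX_i$ has $\|Y_i\|_p\leq 1$. Write $M:=\Ex\max_{i\leq d}|a_iX_i|$; we must show $(\Ex\max_i|a_iX_i|^p)^{1/p}\leq C(M+1)$. If $p\leq 2$ this is immediate from $\|\cdot\|_p\leq\|\cdot\|_2$ and isotropy, and more generally for $p$ comparable to $\log d$ or smaller one can use a direct union-bound / Lemma~\ref{momgrow} argument; so the interesting regime is $p$ large. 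As in earlier proofs, a symmetrization argument (Lemma~\ref{red_to_symm}) lets us assume $X$ is symmetric, so each $Y_i$ is a symmetric log-concave variable.

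The core step: I would argue by contradiction, assuming $(\Ex\max_i|Y_i|^p)^{1/p}$ is much larger than $C(M+1)$, and extract from this a large separated subset of coordinates to which the Sudakov minoration (in the $e^{e^p}$ form, which holds unconditionally by the theorem above, or its covering-number reformulation) applies, producing $M\gtrsim$ something large — a contradiction. Concretely, let $V$ be a level to be chosen. If $(\Ex\max_i|Y_i|^p)^{1/p}\geq V$, then $\Ex\sum_i(|Y_i|\wedge V)^p \geq$ a constant fraction of $V^p$ is NOT quite what I want; instead the cleaner route is: the number of indices $i$ with $\|Y_i\|_p\geq V/2$ must be large whenever the strong moment is large, because $\Ex\max_i|Y_i|^p = \Ex\max_i|Y_i|^p$ and each individual $\Ex|Y_i|^p\leq 1$, so by a simple counting/union bound $\Ex\max_i|Y_i|^p\leq N\cdot 1$ forces... no. The actual mechanism I expect to use is the one already packaged in Proposition~\ref{supcoord}: if $d\geq e^p-1$ and $\|Y_i\|_p\geq V$ for all $i$, then $\Ex\max_i|Y_i|\geq V/C_1$. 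So the plan is: split the coordinates by dyadic level of $\|Y_i\|_p$; on the block where $\|Y_i\|_p\in[V,2V]$, if that block has size $\geq e^{p}$, Proposition~\ref{supcoord} gives $M\gtrsim V$; summing the $p$-th moment contribution over blocks and using that blocks of size $<e^p$ contribute at most $e^p\cdot(2V)^p$ to $\Ex\sum|Y_i|^p$, hence at most $e\cdot 2V$ to the $p$-norm, one gets $(\Ex\max_i|Y_i|^p)^{1/p}\lesssim \sup_V \min\{V, \text{(stuff)}\}\lesssim M + \max_i\|Y_i\|_p$.

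The main obstacle, and where I'd spend the real work, is handling the regime where the large block has cardinality between, say, $e^p$ and $e^{e^p}$ cleanly, and more importantly controlling the interaction across dyadic levels so that the $\ell_\infty$-sum telescopes correctly: Proposition~\ref{supcoord} requires the ambient dimension (block size) to be at least $e^p-1$, and one must check that after discarding the small blocks and the low-level coordinates the remaining block is still large enough AND still isotropic after renormalizing by $a_i=\|Y_i\|_2$ (here $\|Y_i\|_2$ and $\|Y_i\|_p$ differ by at most a factor $p$ by Lemma~\ref{momgrow}, which is harmless since we only lose polynomial-in-$p$ factors that get absorbed when passing from $p$ to $p/(Cp)$-type rescalings as in the proof of Proposition~\ref{supcoord}). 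I would also need the elementary observation $\Ex\max_i|Y_i|^p \leq \sum_{k} \Ex\max_{i\in I_k}|Y_i|^p$ over the dyadic blocks $I_k$, together with $(\sum_k x_k)^{1/p}\leq \sum_k x_k^{1/p}$ being false — so instead one bounds $\Ex\max_i|Y_i|^p\leq \max_k |I_k|\cdot(2^{k+1}V_0)^p$ directly and optimizes, which is the routine-but-delicate computation I would leave for the final write-up.
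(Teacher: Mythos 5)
The key lemma you reach for --- Proposition~\ref{supcoord}, used to bound the number of indices whose scaled coordinate has a large $L_q$-norm --- is indeed the engine of the paper's proof, and your overall plan (stratify the indices, use Proposition~\ref{supcoord} to cap the cardinality of the ``large'' strata, union-bound the rest) is the right shape. But the specific stratification you propose does not close, and the missing step is not merely a ``routine-but-delicate computation.''

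You stratify by dyadic levels of $\|Y_i\|_p$ \emph{for the fixed exponent $p$}. For the strata with $\|Y_i\|_p > 2C_1M$ this works: Proposition~\ref{supcoord} forces each such block to have size $<e^p$, the union bound gives $\Ex\max_{i\in I_k}|Y_i|^p\leq e^p(2^{-k})^p$, and the geometric sum over $k$ closes to $\lesssim e^p\max_i\|Y_i\|_p^p$. The problem is the opposite regime: the (possibly enormous) collection of indices with $\|Y_i\|_p\leq 2C_1M$. Knowing only the $L_p$-norm, the Chebyshev tail is $\Pr(|Y_i|\geq u)\leq(2C_1M/u)^p$, and with $N$ such indices a union bound gives $\Pr(\max_i|Y_i|\geq u)\leq N(2C_1M/u)^p$; integrating $p\int u^{p-1}\min\{1,N(2C_1M/u)^p\}\,\de u$ diverges logarithmically, i.e.\ the $p$-th moment of the max is not controlled by $M$ uniformly in $N$. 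Your proposed replacement, ``bound $\Ex\max_i|Y_i|^p\leq\max_k|I_k|(\mathrm{level}_k)^p$ and optimize,'' is also not a valid inequality: the expectation is at most the \emph{sum} over blocks, not the maximum, and on the low-level blocks the summand is not controlled.

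The fix, which is exactly what the paper's proof does, is to stratify by the \emph{moment order} $q$ at which $\|Y_i\|_q$ first crosses $C_1M$, rather than by the size of $\|Y_i\|_p$. Set $I_q:=\{i:\|a_iX_i\|_q>C_1M\}$; Proposition~\ref{supcoord} gives $|I_q|\leq e^q$ for every $q\geq 2$. Indices in $I_{2p}$ are at most $e^{2p}$ many and handled by the crude union bound in $L_p$, giving $e^2\max_i\|a_iX_i\|_p$. For an index in $I_{2^{k+1}p}\setminus I_{2^kp}$ one has the Chebyshev bound with exponent $2^kp$, namely $\Pr(|a_iX_i|\geq u)\leq(C_1M/u)^{2^kp}$, and there are at most $e^{2^{k+1}p}$ such indices; these two estimates are calibrated so that the union bound over each stratum and then over $k$ produces a tail of order $u^{-2p}$ for $\max_{i\notin I_{2p}}|a_iX_i|/(e^2C_1M)$, which integrates to $\lesssim M$. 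This is the log-concavity input you are not using: fast growth of moments gives fast tails, and only by letting $q\to\infty$ dyadically can you beat the cardinality $e^q$ supplied by Proposition~\ref{supcoord}. With only the $p$-th moment in hand, as in your scheme, the cardinality wins.
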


\begin{proof}
Let $M:=\Ex\max_{i\leq d}|a_iX_i|$. Define 
\[
I_q:=\{i\leq d\colon\, \|a_iX_i\|_q> C_1M\}, \quad q\geq 1
\]
and  
\[
I_{\infty}:=\bigcup_{q\geq 1} I_q=\{i\leq d\colon\, \|a_iX_i\|_{\infty}>C_1M\}. 
\]
Then by Proposition \ref{supcoord},  $|I_q|\leq e^q-1$ for $2\leq q<\infty$.

We have
\begin{align*}
\Big(\Ex\max_{i\in I_{2p}}|a_iX_i|^p\Big)^{1/p}
&\leq \Big(\Ex\sum_{i\in I_{2p}}|a_iX_i|^p\Big)^{1/p}\leq
|I_{2p}|^{1/p}\max_{i\in I_{2p}}\|a_iX_i\|_p
\\
&\leq e^2\max_{i\leq d}\|a_iX_i\|_p.  
\end{align*}

Chebyshev's inequality implies for $u>0$,
\[
\Pr(|a_iX_i|\geq u)\leq u^{-q}\|a_iX_i\|_q^q\leq (C_1M/u)^q \quad \mbox{ for }i\notin I_q.
\]
Therefore for $u\geq 2$,
\begin{align*}
\Pr(\max_{i\notin I_{2p}}|a_iX_i|\geq ue^2C_1M)
&\leq \sum_{i\in I_{\infty}\setminus I_{2p}} \Pr(|a_iX_i|\geq ue^2C_1M)
\\
&\leq \sum_{k=1}^{\infty}\sum_{i\in I_{2^{k+1}p}\setminus I_{2^kp}}  \Pr(|a_iX_i|\geq ue^2C_1M)
\\
&\leq |I_{2^{k+1}p}|\Big(\frac{C_1M}{ue^2C_1M}\Big)^{2^kp}\leq \sum_{k=1}^{\infty} u^{-2^kp}\leq 2u^{-2p}.
\end{align*}
Integration by parts yields 
\[
\Big(\Ex\max_{i\notin I_{2p}}|a_iX_i|^p\Big)^{1/p}\leq CM=C\Ex\max_{i\leq d}|a_iX_i|.
\]
\end{proof}

\section{Unconditional case}

In this section we study the Sudakov minoration principle for unconditional log-concave vectors $X$. 
A random $d$-dimensional vector
$X=(X_1,\ldots,X_d)$ is called \emph{unconditional} if a vector
$(\eta_1 X_1,\ldots,\eta_d X_d)$ has the same distribution as $X$ for any choice of signs
$\eta_1,\ldots,\eta_d\in \{-1,1\}$. 

Since this is only a matter of normalization we will also assume 
that $X$ is isotropic, which in this case means $\Ex X_i^2=1$ for $i=1,\ldots,d$.

By $\ve_i$ we will denote Bernoulli sequence, i.e. a sequence of i.i.d. symmetric $\pm 1$ r.v.'s,
we will also assume that variables $(\ve_i)_i$ are independent of $X$. By $(\cale_i)$ we
will denote a sequence of independent symmetric exponential r.v's with variance 1 (i.e. with
the density $\frac{1}{\sqrt{2}}\exp(-\sqrt{2}|x|)$).

Next lemma shows that vectors $(\ve_i)_{i\leq d}$ and $(\cale_i)_{i\leq d}$ are in a sense extremal in the class of 
$d$-dimensional unconditional isotropic log-concave vectors. 

\begin{lem}
Let $X$ be an isotropic unconditional log-concave vector.\\
i) For any $t\in \er^d$ and $p\geq 1$,
\begin{equation}
\label{compmomunc}
\frac{1}{\sqrt{2}}\Big\|\sum_{i=1}^d t_i\ve_i\Big\|_p\leq \Big\|\sum_{i=1}^d t_iX_i\Big\|_p
\leq 2\sqrt{6}\Big\|\sum_{i=1}^d t_i \cale_i\Big\|_p.
\end{equation}
ii) For any nonempty bounded set $T\subset \er^d$ we have
\begin{equation}
\label{compsuprad}
\Ex\sup_{t\in T}\sum_{i=1}^dt_i\ve_i\leq \sqrt{2}\Ex\sup_{t\in T}\sum_{i=1}^nt_iX_i.
\end{equation}
Moreover for any $\emptyset\neq I\subset\{1,\ldots,d\}$,
\begin{equation}
\label{compsupexp}
\Ex\sup_{t\in T}\sum_{i\in I}t_i\cale_i\leq C_2\log(|I|+1)\Ex\sup_{t\in T}\sum_{i\in I}t_iX_i.
\end{equation}
\end{lem}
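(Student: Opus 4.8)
\medskip

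The plan is to prove the three inequalities separately, exploiting the fact that both the Bernoulli and the exponential distributions sit at extreme ends of the class of symmetric isotropic log-concave variables, together with the tensorization of log-concavity and the contraction/comparison principles for such processes. I will reduce everything to a one-dimensional comparison and then integrate/take suprema.

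\medskip

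\textbf{Part i), lower bound.} First I would note that for a symmetric log-concave variable $Y$ with $\Ex Y^2 = 1$ one has $\|Y\|_p \ge \frac{1}{\sqrt 2}\|\ve\|_p$ for every $p\ge 1$, where $\ve$ is a symmetric $\pm1$ variable; indeed $\|Y\|_2 = 1 = \|\ve\|_2$ and by Lemma \ref{momgrow} the moments of $Y$ grow at least as fast as those of $\ve$ (whose even moments are all $1$, and $\|\ve\|_p \le 1 \le \|Y\|_2 \le \|Y\|_p$ for $p\ge 2$; for $p\in[1,2]$ use $\|Y\|_p \ge \|Y\|_1 \ge \frac1{\sqrt2}\|Y\|_2 = \frac1{\sqrt2} \ge \frac1{\sqrt2}\|\ve\|_p$). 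But this pointwise bound on coordinates is not enough; I need it for the linear form $\sum t_iX_i$. Here I would condition on signs: since $X$ is unconditional, $\sum t_iX_i$ has the same law as $\sum \ve_i|t_i|\,|X_i|$ with $(\ve_i)$ independent of $(|X_i|)$. Conditionally on $(|X_i|)=(x_i)$, $\sum \ve_i |t_i| x_i$ is a Bernoulli linear form, and by the standard contraction principle for Bernoulli sums (the $L_p$ norm of $\sum \ve_i c_i$ is increasing in each $|c_i|$), together with the conditional Jensen bound $\Ex(\sum \ve_i|t_i|x_i|\cdot|^p)^{1/p}\ge$ something — actually the cleanest route is: $\|\sum t_iX_i\|_p^p = \Ex_{x}\Ex_\ve |\sum \ve_i t_i x_i|^p \ge \Ex_x\, c_p \big(\sum t_i^2 x_i^2\big)^{p/2}$? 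That is false for small $p$. Instead I would use the Bernoulli comparison directly: for fixed $(x_i)$, $\|\sum \ve_i t_i x_i\|_p \ge$ is not monotone in a way that pulls $x_i$ down to its mean. The right tool is Jensen in the other direction together with the fact $\Ex|X_i| \ge \frac{1}{\sqrt2}$: write $\|\sum t_iX_i\|_p = \|\sum \ve_i|t_i|\,|X_i|\,\|_p$ and apply Jensen over $(|X_i|)$ \emph{inside} — since $c\mapsto \|\sum\ve_i c_i\|_p$ is convex and symmetric, hence Schur-convex, but I only need: it is a norm in $c$, so by the triangle inequality and convexity $\Ex_x\|\sum\ve_i|t_i|x_i\|_p \ge \|\sum \ve_i |t_i|\Ex|X_i|\,\|_p \ge \frac1{\sqrt2}\|\sum t_i\ve_i\|_p$, using $\Ex|X_i|\ge\frac1{\sqrt2}$ from Lemma \ref{momgrow} ($\|X_i\|_2\le\sqrt2\|X_i\|_1$). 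Then $\|\sum t_iX_i\|_p = (\Ex_x\Ex_\ve|\cdots|^p)^{1/p} \ge \Ex_x\|\sum\ve_i|t_i|x_i\|_p$ by Jensen in $p$. That closes the lower bound; this is the step I expect to require the most care in getting the direction of Jensen right.

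\medskip

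\textbf{Part i), upper bound.} For the upper bound I would use the analogous extremality of the exponential: a symmetric log-concave $Y$ with variance $1$ has moments growing \emph{no faster} than $\sqrt6$ times those of the standard symmetric exponential $\cale$ (whose $\|\cale\|_p$ is comparable to $\Gamma(p+1)^{1/p}\sim p$). Concretely, $\|Y\|_p \le \frac{\Gamma(p+1)^{1/p}}{\Gamma(2+1)^{1/2}}\|Y\|_2 = \frac{\Gamma(p+1)^{1/p}}{\sqrt2}$ by Lemma \ref{momgrow}, while $\|\cale\|_p = \frac{1}{\sqrt2}(\Gamma(p+1))^{1/p}$ (the symmetric exponential with variance $1$ has density $\frac{1}{\sqrt2}e^{-\sqrt2|x|}$, so $\Ex|\cale|^p = \Gamma(p+1)/(\sqrt2)^p\cdot$const), giving $\|Y\|_p \le c\|\cale\|_p$ for a fixed constant. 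Raising to linear forms: by unconditionality again, $\|\sum t_iX_i\|_p = \|\sum \ve_i|t_i|\,|X_i|\,\|_p$; I bound this by replacing each $|X_i|$ by a stochastically dominating (up to constant) exponential-type variable. The clean way is to invoke the comparison of $\|\sum\ve_i|t_i|Z_i\|_p$ where $Z_i$ are nonnegative and the $p$-th moments of $|t_i|Z_i$ are controlled — one uses that for symmetric independent $W_i, W_i'$ with $\|W_i\|_p \le \lambda\|W_i'\|_p$ for all $p$, one gets $\|\sum W_i\|_p \le C\lambda\|\sum W_i'\|_p$ (this is a standard fact; it follows from the fact that such domination of all moments gives domination of the Orlicz-type norms, or directly via symmetrization and the contraction principle applied coordinatewise). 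Here $W_i = t_iX_i$, $W_i' = t_i\cale_i$, $\lambda = 2\sqrt6/$const, yielding the claimed $\|\sum t_iX_i\|_p \le 2\sqrt6\|\sum t_i\cale_i\|_p$ after tracking constants. The main subtlety is justifying the passage from one-dimensional moment comparison to the comparison of the sums; I would cite or reprove the coordinatewise contraction argument (conditioning on $X$ and on $\cale$ and comparing via the intermediate process $\sum \ve_i\max(|X_i|,|\cale_i|)\cdot$scaling).

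\medskip

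\textbf{Part ii).} For \eqref{compsuprad} I would condition on $(|X_i|)=(x_i)$ and use the contraction principle for Bernoulli processes: since $\Ex|X_i|\ge 1/\sqrt2$, i.e. $x_i$ is "on average" at least $1/\sqrt2$, and $t\mapsto \Ex_\ve\sup_{t\in T}\sum t_i\ve_i x_i$ is convex in $(x_i)$ and each $x_i$ enters through a contraction, the Talagrand/Kahane contraction principle gives $\Ex_\ve\sup_{t\in T}\sum t_i\ve_i x_i \ge$ comparison with $x_i$ replaced by constants only after taking expectation — precisely, $\Ex\sup_{t\in T}\sum t_iX_i = \Ex_x\Ex_\ve\sup_t\sum \ve_i|t_i|\sgn(?)x_i$; by unconditionality of $X$ the signs are absorbed, so $\Ex\sup_t\sum t_iX_i = \Ex_x\Ex_\ve\sup_t\sum \ve_i t_i x_i \ge \Ex_\ve\sup_t\sum\ve_i t_i\Ex|X_i|$ by Jensen (the sup-expectation is convex in $(x_i)$), and $\Ex|X_i|\ge 1/\sqrt2$ together with the contraction principle (scaling each coordinate down by $\sqrt2\Ex|X_i|\ge1$ only decreases the Bernoulli sup) gives $\ge \frac{1}{\sqrt2}\Ex\sup_t\sum t_i\ve_i$. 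For \eqref{compsupexp} the direction is reversed and this is the hard part: I must bound the exponential process by the $X$ process, losing only a $\log(|I|+1)$ factor. The strategy is to compare $\cale_i$ with $X_i$ using part i): for every $t$, $\|\sum_{i\in I} t_i\cale_i\|_p \le C\|\sum_{i\in I}t_iX_i\|_p$ for all $p$ (from \eqref{compmomunc}, since $\cale$-linear forms are dominated by $X$-linear forms too — wait, \eqref{compmomunc} only gives $\cale$ as an \emph{upper} bound for $X$; I need the reverse). Here I would instead use that $X_i$, being isotropic symmetric log-concave, satisfies $\|X_i\|_p \ge \frac1{\sqrt2}\|\ve_i\|_p$ but I need a \emph{lower} bound on $\|X_i\|_p$ of exponential type, which is false in general (e.g. $X_i$ could be nearly Gaussian). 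So the per-linear-form comparison must fail, and the $\log|I|$ loss is genuine: the right approach is a chaining/majorizing-measure argument or, more elementarily, the observation that $\cale_i \overset{d}{=} \ve_i E_i$ with $E_i$ a rate-$\sqrt2$ exponential, and a comparison of the $X$-process with the canonical exponential process via the two-sided estimate for $\Ex\sup$ of exponential processes in terms of an $\ell_1+\ell_2$ mixed quantity (Talagrand), matched against the corresponding lower bound for log-concave $X$ with $\log|I|$ slack coming from a union bound over $\le |I|$ "large-deviation" directions. I expect \eqref{compsupexp}, and pinning down where exactly the logarithm enters, to be the principal obstacle; everything else is contraction-principle bookkeeping and the constant-tracking in Lemma \ref{momgrow}.
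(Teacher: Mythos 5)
Your treatment of the lower bound in \eqref{compmomunc} and of \eqref{compsuprad} matches the paper: unconditionality lets you write $\sum t_iX_i\overset{d}{=}\sum\ve_i t_i|X_i|$, Jensen in $(|X_i|)$ (convexity of the conditional $L_p$-norm / of $\Ex_\ve\sup$) pulls $|X_i|$ to $\Ex|X_i|\ge 1/\sqrt2$, and the Bernoulli contraction principle finishes. Despite the false starts in your write-up, the correct route is there.

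There are, however, two genuine gaps. First, the upper bound in \eqref{compmomunc}. You reduce it to a claimed ``standard fact'': for symmetric independent $W_i,W_i'$ with $\|W_i\|_p\le\lambda\|W_i'\|_p$ for all $p$, one has $\|\sum W_i\|_p\le C\lambda\|\sum W_i'\|_p$. This does not apply here because the coordinates $X_1,\dots,X_d$ are \emph{not} independent --- $X$ is an arbitrary unconditional log-concave vector, and the whole point is to compare a genuinely dependent vector to the independent product $\cale$. Even setting dependence aside, the claimed coordinatewise transfer of moment domination to sums is not a standard fact without structural hypotheses. What the paper actually uses is the Bobkov--Nazarov theorem (a distributional comparison between unconditional isotropic log-concave vectors and the exponential product measure), which, after integration by parts, gives $\|\sum t_iX_i\|_{2k}\le\sqrt6\,\|\sum t_i\cale_i\|_{2k}$ for even $k$, and then Lemma~\ref{momgrow} extends this to all $p\ge1$ at the cost of a factor $2$. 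Your sketch has no substitute for this structural input.

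Second, \eqref{compsupexp}. You correctly note that a per-linear-form moment comparison between $\cale$ and $X$ is hopeless (e.g.\ $X$ could have Gaussian marginals), and you hit on the right starting point $\cale_i\overset{d}{=}\ve_i|\cale_i|$, but you then propose Talagrand's two-sided characterization of exponential processes plus a union bound, which is far more than needed and not clearly made to close. The paper's argument is three lines: conditionally on $(|\cale_i|)$, the Bernoulli contraction principle gives $\Ex_\ve\sup_{t\in T}\sum_{i\in I}t_i\ve_i|\cale_i|\le(\max_{i\in I}|\cale_i|)\,\Ex_\ve\sup_{t\in T}\sum_{i\in I}t_i\ve_i$; Fubini and independence then give $\Ex\sup_t\sum_{i\in I}t_i\cale_i\le\Ex\max_{i\in I}|\cale_i|\cdot\Ex\sup_t\sum_{i\in I}t_i\ve_i$; and $\Ex\max_{i\in I}|\cale_i|\le C\log(|I|+1)$ for i.i.d.\ exponentials. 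Finally \eqref{compsuprad} replaces the Bernoulli process by the $X$-process. The logarithm enters simply from the expected maximum of $|I|$ exponentials, not from any chaining or large-deviation dimension count.
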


\begin{proof}
i) By Lemma \ref{momgrow}, Jensen's inequality and unconditionality of $X$,
\[
\frac{1}{\sqrt{2}}\Big\|\sum_{i=1}^d t_i\ve_i\Big\|_p\leq
\Big\|\sum_{i=1}^d t_i\ve_i\Ex|X_i|\Big\|_p\leq \Big\|\sum_{i=1}^d t_i\ve_i|X_i|\Big\|_p
=\Big\|\sum_{i=1}^d t_iX_i\Big\|_p.
\]
On the other hand the result of Bobkov-Nazarov \cite{BN} and integration by parts give for $k=1,2,\ldots$,
$\|\sum_{i=1}^d t_iX_i\|_{2k}\leq \sqrt{6}\|\sum_{i=1}^d t_i\cale_i\|_{2k}$ and the upper bound in \eqref{compmomunc}
follows by Lemma \ref{momgrow}.

ii) Inequality \eqref{compsuprad} may be proven in a similar way as the lower bound in \eqref{compmomunc}. 
To finish the proof observe that
\begin{align*}
\Ex\sup_{t\in T}\sum_{i\in I}t_i\cale_i&=\Ex\sup_{t\in T}\sum_{i\in I}t_i\ve_i|\cale_i|
\leq \Ex \max_{i\in I}|\cale_i|\Ex\sup_{t\in T}\sum_{i\in I}t_i\ve_i\\
&\leq C\log(|I|+1)\Ex\sup_{t\in T}\sum_{i\in I}t_i\ve_i,
\end{align*}
thus \eqref{compsupexp} follows by \eqref{compsuprad}
\end{proof}

The next result easily follows by comparing  unconditional vectors with the exponential
random vector $\cale=(\cale_i)_{i\leq d}$.

\begin{prop}
\label{unclogd}
Suppose that $X$ is a $d$-dimensional log-concave unconditional vector. Then $X$ satisfies 
$\mathrm{SMP}(1/C\log(d+1))$.
\end{prop}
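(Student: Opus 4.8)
The plan is to reduce the statement for an unconditional isotropic log-concave vector $X$ to the corresponding statement for the exponential vector $\cale=(\cale_i)_{i\le d}$, for which Sudakov minoration is already known (Example \ref{indlogcon}, the case $\cale_i\sim\exp(-\sqrt 2|x|)$). First I would invoke Lemma \ref{red_to_symm} and Lemma \ref{preslin} to assume $X$ is symmetric and isotropic, so that in the unconditional case $\Ex X_i^2=1$ for all $i$; this is already observed in the section preamble. Fix $p\ge 1$ and $T\subset\er^d$ with $|T|\ge e^p$ satisfying \eqref{sudass}, i.e.\ $\|\langle t-s,X\rangle\|_p\ge A$ for all distinct $t,s\in T$.

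Next I would transfer the separation hypothesis from $X$ to $\cale$. By the upper bound in \eqref{compmomunc}, $\|\langle t-s,X\rangle\|_p\le 2\sqrt 6\,\|\langle t-s,\cale\rangle\|_p$, so the set $T$ satisfies the Sudakov separation hypothesis for $\cale$ at level $A/(2\sqrt6)$: namely $\|\langle t-s,\cale\rangle\|_p\ge A/(2\sqrt6)$ for all distinct $t,s\in T$, and still $|T|\ge e^p$. Since $\cale$ is a vector with independent symmetric log-concave coordinates, it satisfies $\mathrm{SMP}(\kappa_0)$ for a universal constant $\kappa_0$ by Example \ref{indlogcon}; hence
\[
\Ex\sup_{t,s\in T}\langle t-s,\cale\rangle\ \ge\ \kappa_0\cdot\frac{A}{2\sqrt6}.
\]

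Finally I would push this lower bound back to $X$ using the comparison of suprema \eqref{compsupexp}. Taking $I=\{1,\dots,d\}$ in \eqref{compsupexp} gives
\[
\Ex\sup_{t\in T}\langle t,\cale\rangle\ \le\ C_2\log(d+1)\,\Ex\sup_{t\in T}\langle t,X\rangle,
\]
and since $\cale$ is symmetric the left side, after centering at a fixed point $t_0\in T$ (so that $T-t_0$ is a legitimate index set and $\Ex\sup_{t\in T}\langle t-t_0,\cale\rangle=\tfrac12\Ex\sup_{t,s\in T}\langle t-s,\cale\rangle$), controls $\Ex\sup_{t,s\in T}\langle t-s,\cale\rangle$ up to the factor $\log(d+1)$; the same translation is harmless for $X$ since \eqref{sudconcl} is translation invariant. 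Combining the three displays yields
\[
\Ex\sup_{t,s\in T}\langle t-s,X\rangle\ \ge\ \frac{1}{C_2\log(d+1)}\,\Ex\sup_{t,s\in T}\langle t-s,\cale\rangle\ \ge\ \frac{\kappa_0}{2\sqrt6\,C_2\log(d+1)}\,A,
\]
which is $\mathrm{SMP}_p(1/(C\log(d+1)))$ with $C$ absolute; as $p\ge1$ was arbitrary, $X$ satisfies $\mathrm{SMP}(1/(C\log(d+1)))$. I expect the only delicate points to be purely bookkeeping ones: making sure the symmetrization reductions are compatible with unconditionality and isotropy (they are, since an independent symmetrization of an unconditional isotropic vector is again unconditional, and one rescales), and handling the centering so that the two-sided suprema $\sup_{t,s}$ match the one-sided suprema $\sup_t$ used in \eqref{compsupexp}; no genuinely hard estimate is needed beyond the already-cited facts.
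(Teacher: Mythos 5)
Your proposal is correct and follows essentially the same route as the paper: transfer the $L_p$-separation from $X$ to the exponential vector $\cale$ via the upper bound in \eqref{compmomunc}, apply the known SMP for $\cale$, and transfer the resulting lower bound on the supremum back to $X$ via \eqref{compsupexp}. The only cosmetic differences are that you invoke Lemma \ref{red_to_symm} even though unconditionality already gives symmetry, and you spell out the centering step that the paper leaves implicit by applying \eqref{compsupexp} directly to the two-sided supremum.
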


\begin{proof}
Recall that w.l.o.g.\ we assume that $X$ is isotropic. Let $p\geq 1$ and $T$ be a set in $\er^d$ with cardinality at least 
$e^p$ such that \eqref{sudass} holds. Then by \eqref{compmomunc} for distinct points $t,s\in T$, 
$\|\langle t-s,\cale\rangle\|_p\geq A/(2\sqrt{6})$, where $\cale=(\cale_i)_{i\leq d}$. We know (see Example \ref{indlogcon})
that $\cale$ satisfies the Sudakov minoration principle with a universal constant, thus by \eqref{compsupexp} we have
\[
\frac{A}{C}\leq \Ex\sup_{t,s\in T}\langle t-s,\cale\rangle \leq C_2\log(d+1)\Ex\sup_{t,s\in T}\langle t-s,X\rangle.
\] 
\end{proof}

We are now ready to present the main result of this section. Its proof is also based on comparison ideas, but in a less
straightforward way.

\begin{thm}
\label{uncpsquare}
Let $X$ be a log-concave unconditional vector in $\er^d$, $p\geq 1$ and $T\subset \er^d$ be such that $|T|\geq e^{p^2}$ and 
$\|\langle t-s,X\rangle\|_p\geq A$ for distinct points $t,s\in T$. Then 
\[
\Ex\sup_{t,s\in T}\langle t-s,X\rangle =2\Ex\sup_{t\in T}\langle t,X\rangle\geq \frac{1}{C}A.
\] 
\end{thm}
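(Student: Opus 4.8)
The plan is to reduce to the unconditional isotropic case as usual (Lemmas \ref{red_to_symm} and \ref{preslin}), and then to exploit the comparison inequalities \eqref{compmomunc}--\eqref{compsupexp} together with the exponential-vector Sudakov minoration (Example \ref{indlogcon}). The naive application of Proposition \ref{unclogd} loses a factor $\log(d+1)$, which can be as large as we like relative to $p$; the point of assuming $|T|\geq e^{p^2}$ is that it buys us enough cardinality to beat this logarithmic loss. The key idea is a \emph{coordinate-splitting} argument: each difference vector $t-s$ should be split into a part supported on ``few'' coordinates, where the $\ell_\infty$-type behaviour of $X$ (which is like $\cale$) is essentially as good as Bernoulli, and a part supported on ``many'' coordinates, where the relevant scale is small. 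More precisely, for a parameter $m$ I would say a coordinate $i$ is \emph{large} for $t-s$ if $|t_i-s_i|\cdot\|X_i\|_{?}$ is comparable to the bulk of the $L_p$ norm, and note that there can be at most $\sim p$ such coordinates (by the regular growth of moments, a single coordinate contributes at the scale $\|(t_i-s_i)X_i\|_p \lesssim p \|(t_i-s_i)X_i\|_1$, so if more than $\sim p$ coordinates each carried a definite fraction of $A$ the $L_p$-norm would be too small — this is essentially the content of Proposition \ref{supcoord} / the bound $|I_q|\le e^q-1$ from the previous section).

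Concretely, I would proceed as follows. First, as in Remark \ref{smallp} dispose of small $p$, so assume $p$ is large. Write $A' := A/C$ and, using Lemma \ref{momgrow} as in the proof of the previous theorem, pass from $p$ to a suitable fraction $p' \asymp p$ while keeping $|T| \geq e^{(p')^2}$ and the separation at scale $A' \asymp A$. Next, for each pair of distinct $t,s\in T$ decompose $\langle t-s,X\rangle = \langle P_J(t-s),X\rangle + \langle P_{J^c}(t-s),X\rangle$ where $J=J(t,s)$ is the set of coordinates $i$ with $|(t_i-s_i)X_i|$ having $L_{p'}$-norm at least (say) $A'/(2p')$; the moment-growth bound forces $|J|\le C p'$, hence $\log(|J|+1)\le C\log(p'+1)$. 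On the ``tail'' part $J^c$ one estimates $\|\langle P_{J^c}(t-s),X\rangle\|_{p'}$ crudely and subtracts it off, arranging that the remaining separation on the $J$-parts is still $\ge A'/4$ in $L_{p'/2}$, say. The set $\{P_J(t-s):t,s\in T\}$ lives in a union of low-dimensional coordinate subspaces, but its cardinality is still at least $|T|-1\ge e^{(p')^2}-1 \gg e^{p'}$, so after a covering/pigeonhole step (Lemma \ref{ortdec}-style, or directly extracting a large separated subset) one gets a subset of size $\ge e^{p'}$ on which the $J$-parts are $A'/4$-separated; here applying exponential-vector Sudakov (Example \ref{indlogcon}) to $\cale$ and then \eqref{compsupexp} with $I=$ the union of the relevant $J$'s costs only $\log(Cp'+1)$, which the extra cardinality $e^{(p')^2}$ versus the needed $e^{p'}$ absorbs, because Sudakov applied with a larger effective $p$ gains a $\sqrt{\log|T|}$-type factor (Remark \ref{equivcov} / the Gaussian example).

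The main obstacle I anticipate is making the splitting genuinely uniform over the (exponentially many) pairs $t,s$: the set $J(t,s)$ depends on the pair, so the ``$I$'' appearing in \eqref{compsupexp} is the union of all these $J$'s and could again be all of $\{1,\dots,d\}$. The resolution I have in mind is to \emph{not} fix $J$ globally but to iterate: restrict to a sub-cube of coordinates of size $\asymp p'$ at a time, use the cardinality surplus ($e^{(p')^2}$ rather than $e^{p'}$) to guarantee that on many pairs the separation already comes from a bounded-size coordinate block, and then apply the comparison only on that block where $\log$ of the block-size is $O(\log p')$. Quantitatively one wants: Sudakov for $\cale$ restricted to a coordinate block of size $n_0\asymp p'$ with separation $\asymp A$ and cardinality $e^{p'} = e^{(n_0)}$ gives $\gtrsim A$; the $\log(n_0+1) = O(\log p')$ loss from \eqref{compsupexp} is then killed provided the true cardinality $e^{(p')^2}$ lets us run Sudakov at an inflated moment level $p'\log p'$ while still having $|T|\ge e^{p'\log p'}$ — which holds since $(p')^2 \ge p'\log p'$ for $p'\ge 1$. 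Assembling these pieces and tracking constants should give the claimed bound $\Ex\sup_{t,s\in T}\langle t-s,X\rangle \ge A/C$, and the stated identity $\Ex\sup_{t,s\in T}\langle t-s,X\rangle = 2\Ex\sup_{t\in T}\langle t,X\rangle$ is just symmetry of $X$ after the initial reduction.
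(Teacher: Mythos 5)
Your strategy has the right high-level philosophy (absorb the $\log$ loss from \eqref{compsupexp} by running exponential Sudakov at an inflated moment level, using the cardinality surplus $e^{p^2}$ versus $e^p$), and this is indeed what the paper's Step~3, Case~I does. But the concrete mechanism you propose has a gap. You set $J=J(t,s)$ to be the coordinates where $\|(t_i-s_i)X_i\|_{p'}\gtrsim A'/p'$ and claim $|J|\le Cp'$ ``by moment growth''. This is not justified: nothing bounds $\|\langle t-s,X\rangle\|_{p'}$ from \emph{above}, so the number of coordinates each carrying a definite fraction of $A'/p'$ is uncontrolled; the relevant bound in the spirit of Proposition~\ref{supcoord} would only give something like $|J|\lesssim e^{p'}$, which is off by an exponential from what you need. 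Even granting $|J|\le Cp'$, you correctly flag that $J$ depends on the pair, so the set $I$ appearing in \eqref{compsupexp} is a union over $\binom{|T|}{2}$ pairs and can be all of $\{1,\dots,d\}$; the proposed ``iterate over sub-cubes of size $\asymp p'$'' is a heuristic rather than an argument, and the quantitative bookkeeping ($\log$ loss $O(\log p')$, inflated level $p'\log p'$) does not match what is actually required.

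What is missing is the paper's Step~1 and Step~2, which turn the pair-dependent decomposition into a \emph{point}-dependent one, and this is the crux of the proof. Step~1 applies Talagrand's Bernoulli Sudakov minoration together with \eqref{compsuprad}: either $N(T,d_p,\delta A/2)\ge e^p$ and one is already done, or by pigeonhole one passes to a translated subset $T'\ni 0$ with $|T'|\ge e^{p^2-p}$ and $\|\sum_i t_i\ve_i\|_p\le\delta A$ for \emph{every single} $t\in T'$ — a per-point normalization. Step~2 then uses Hitczenko's lower bound \eqref{estrad} and a truncation $\varphi$ killing coordinates below $C_3\delta A/p$ (justified by the Gluskin--Kwapie\'n upper bound \eqref{BN} and the Rademacher contraction principle) to reduce to the case where $|\supp(t)|\le p$ for every $t\in T$. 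Only with this uniform per-point support bound does the greedy construction of Step~3 close: one builds points $t_1,\dots,t_N$ with disjoint new support blocks $\Delta_k$, and either $N\le e^p$, so $|I|=|\bigcup_k\supp(t_k)|\le Np\le e^{2p}$ and $\log(|I|+1)\le Cp$ (note: $O(p)$, not $O(\log p)$), which is absorbed by boosting the $\cale$-separation to level $p^2/2$ via the $q\|\cdot\|_\infty$ term in Gluskin--Kwapie\'n (gaining a factor $p$, not $\log p$); or $N\ge e^p$ and one has $e^p$ vectors with disjoint supports and $L_p$-norm $\ge A/4$, to which Proposition~\ref{supcoord} applies directly. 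Without Steps~1--2, your pair-wise $J$ and the bound $|J|\le Cp'$ do not hold, and the iteration you sketch has no clear termination.
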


\begin{proof}
W.l.o.g.\ $X$ is isotropic. By Remark \ref{smallp} we may assume that $p\geq 2$. Observe also that if $0\in T$
then $\Ex\sup_{t,s\in T}\langle t-s,X\rangle\geq \Ex\sup_{t\in T}|\langle t,X\rangle|$, so for such $T$ it is enough to show
that
\begin{equation}
\label{supmod}
\Ex\sup_{t\in T}|\langle t,X\rangle| \geq \frac{1}{C}A.
\end{equation}

We divide the proof into 3 steps. In the first two steps we show that we may add
additional assumptions on the set $T$ (slightly decreasing its cardinality
and rescaling $A$ by a universal constant).

\medskip

\noindent
{\bf Step 1.}
We may assume that $0\in T$, $|T|\geq e^{p^2-p}$ and
\begin{equation}
\label{ass1}
\Big\|\sum_{i=1}^d t_{i}\ve_i\Big\|_p\leq \delta A \quad \mbox{ for  all } t\in T,
\end{equation}
where $\delta>0$ is a positive universal constant (to be chosen later).

Let $d_p(t,s)=\|\sum_{i=1}^d (t_i-s_i)\ve_i\|_p$.
By the result of Talagrand (see Example \ref{smprad}) we know that if $N(T,d_p,\alpha)\geq e^p$ then
$\Ex\sup_{t,s\in T}\sum_{i=1}^d (t_i-s_i)\ve_i\geq \frac{1}{C}\alpha$. Thus using \eqref{compsuprad}
we may assume that $N(T,d_p,\delta A/2)\leq e^p$, however this means that there exists
$t^{0}\in T$ such that 
\[
|\{t\in T\colon\ d_p(t,t^{0})\leq \delta A\}| \geq |T|/e^p.
\]
So we may consider the new set $T'=\{t-t^0\colon\ d_p(t,t^0)\leq \delta A\}.$ 

\medskip

\noindent
{\bf Step 2.} We may assume (changing $A$ into $A/2$)  that $0\in T$, $|T|\geq e^{p^2-p}$, \eqref{ass1} holds and 
\begin{equation}
\label{ass2}
|\supp (t)|\leq p \quad \mbox{ for all } t\in T. 
\end{equation}

By Step 1 we may assume
that $0\in T$ and \eqref{ass1} holds. The result of Hitczenko \cite{H} gives
\begin{equation}
\label{estrad}
\Big\|\sum_{i=1}^d t_{i}\ve_i\Big\|_p\geq 
\frac{1}{C_3}\Big(\sum_{i\leq p} t_i^*+\sqrt{p}\Big(\sum_{i> p} |t_i^*|^2\Big)^{1/2}\Big),
\end{equation}
where $t_i^*$ denotes the nonincreasing rearrangement of $(|t_i|)$.

Let us define $\varphi(x):=\sgn(x)(|x|-C_3\delta A/p)_{+}$ for $x\in \er$ 
and let $\varphi(t):=(\varphi(t_{i}))$ for $t\in \er^d$.
Then \eqref{ass1} and \eqref{estrad} imply  that $|\supp(\varphi(t))|\leq p$ for $t\in T$.
The upper bound in \eqref{compmomunc} and the Gluskin-Kwapie\'n estimate \cite{GK} yield
\begin{equation}
\label{BN}
\|\langle t, X\rangle\|_p\leq 
2\sqrt{6}\|\langle t, \cale\rangle\|_p\leq C_4(p\|t\|_{\infty}+\sqrt{p}\|t\|_2).
\end{equation}
Thus if $\delta\leq 1/(12C_3C_4)$ we get for $t\in T$,
\begin{align*}
\|\langle t-\varphi(t),X\rangle\|_p&\leq C_4(p\|t-\varphi(t)\|_{\infty}+\sqrt{p}\|t-\varphi(t)\|_{2})
\\
&\leq C_4\Big(2p\|t-\varphi(t)\|_{\infty}+\sqrt{p}\Big(\sum_{i>p}|t_i^*|^2\Big)^{1/2}\Big)
\\
&\leq C_3C_4\Big(2\delta A+\Big\|\sum_{i=1}^d t_{i}\ve_i\Big\|_p\Big)\leq 3C_3C_4\delta A\leq 
\frac{A}{4}.
\end{align*}
Therefore for any $t,s\in T$, $t\neq s$,
\[
\|\langle \varphi(t)-\varphi(s),X\rangle\|_p\geq \|\langle t-s,X\rangle\|_p -2\frac{A}{4}\geq 
\frac{A}{2}.
\]
Moreover the contraction principle for Rademacher processes (see Theorem 4.12 in \cite{LT}) and
the unconditionality of $X$ yield
\[
\Ex\sup_{t\in T}|\langle t, X\rangle| \geq \frac{1}{2}\Ex\sup_{t\in T}|\langle\varphi(t),X\rangle|,
\]
so it is enough to show estimate \eqref{supmod} for the set $\varphi(T)=(\varphi(t))_{t\in T}$.
Note that condition \eqref{ass1} holds for $\varphi(T)$ since it holds for $T$ and $|\varphi(t_i)|\leq |t_i|$ for
all $i$.
\medskip

\noindent
{\bf Step 3.} We consider a finite set $T$ such that $0\in T$, $|T|\geq e^{p^2-p}\geq e^{p^2/2}$,
$\|\langle t-s,X\rangle\|_p\geq A$ for distinct points $t,s\in T$ and conditions \eqref{ass1}-\eqref{ass2} hold. 
To finish the proof it is enough to show \eqref{supmod}.

To this end we construct
inductively points $t_1,\ldots,t_{N}$. For $t_1$ we take any point in $T$. Suppose that
$t_1,\ldots,t_n$ are constructed. We put 
\[
I_n:=\bigcup_{k\leq n}\supp(t_k)\quad \mbox{ and }\quad J_n:=\{1,\ldots,d\}\setminus I_n
\]
and  consider the set
\[
T_n:=\Big\{t\in T\colon \big\|\langle t_{J_n},X\rangle \big\|_p\geq \frac{A}{4}\Big\},
\]  
where for $I\subset \{1,\ldots,d\}$ we put $t_I:=(t_i\ind_{\{i\in I\}})$.
If $T_n$ is nonempty we pick for $t_{n+1}$ any point in this set, otherwise we finish the construction
and set $n=N$, $I=I_N$, $J=J_N$. 

We distinguish between two possibililities.

\medskip
\noindent
\textbf{Case I.} $N\leq e^p$, then $|I|\leq \sum_{i=1}^N|\supp(t_k)|\leq Np\leq e^{2p}$.
Observe that then for any $t,s\in T$, $t\neq s$,
\[
\|\langle t_I-s_I,X\rangle\|_p\geq \|\langle t-s,X\rangle\|_p-\|\langle t_J,X\rangle\|_p-\|\langle s_J,X\rangle\|_p
\geq \frac{A}{2}.
\]
Thus by \eqref{BN},\eqref{estrad} and \eqref{ass1},
\begin{align*}
\frac{A}{2}&\leq \|\langle t_I-s_I,X\rangle\|_p\leq C_4(p\|t_I-s_I\|_\infty+\sqrt{p}\|t_I-s_I\|_2)
\\
&\leq C_4\Big(2p\|t_I-s_I\|_\infty+C_3\Big\|\sum_{i\in I}t_i\ve_i\Big\|_p\Big)\leq 
C_3C_4(2p\|t_I-s_I\|_{\infty}+\delta A)
\end{align*}
therefore if $\delta\leq 1/(4C_3C_4)$ then $\|t_I-s_I\|_\infty\geq A/(8C_3C_4p)$.
By the Gluskin-Kwapie\'n estimate \cite{GK} we get
\[
\|\langle t_I-s_I,\cale\rangle\|_{p^2/2}\geq \frac{p^2}{C}\|t_I-s_I\|_\infty\geq \frac{pA}{C}.
\] 
where $\cale=(\cale_i)_{i\leq d}$. Since $\cale$ satisfies the Sudakov minoration principle with a uniform constant 
(see Example \ref{indlogcon}) and $|T|\geq e^{p^2/2}$ we obtain
\[
2\Ex\sup_{t\in T}|\langle t_I,\cale\rangle|\geq \Ex\sup_{t\in T}\langle t_I-s_I,\cale\rangle\geq \frac{pA}{C}.
\]
Thus by \eqref{compsupexp} we have
\[
\frac{pA}{C}\leq \Ex\sup_{t\in T}|\langle t_I,\cale\rangle| \leq C_2\log(|I|+1)\Ex\sup_{t\in T}|\langle t_I,X\rangle|
\leq Cp\Ex\sup_{t\in T}|\langle t,X\rangle|.
\] 

\medskip
\noindent
\textbf{Case II.} $N\geq e^p$. Let $I_0=\emptyset$, $\Delta_k:=I_k\setminus I_{k-1}$ and
 $s_k:=t_{k,\Delta_k}$ for $k=1,\ldots,N$. Then by our construction vectors $s_k$ have
disjoint supports and $\|\langle s_k,X\rangle\|_p\geq A/4$ for $k=1,\ldots,N$. Thus by  Proposition \ref{supcoord}
(applied with $V=A/4$, $a_i=|s_i|$ and isotropic vector $(\langle s_i,X\rangle/|s_i|)_{i\leq N}$) we get
\[
\Ex\max_{k\leq N}|\langle s_k,X\rangle|\geq \frac{A}{4C_1}. 
\]
Since sets $\Delta_k$ are disjoint and vector $X$ is unconditional we get
\[
\Ex\max_{t\in T}|\langle t,X\rangle|\geq \frac{1}{2}\Ex\max_{t\in T}\max_{k}|\langle t_{\Delta_k},X\rangle|
\geq \frac{1}{2}\Ex\max_{k\leq N}|\langle s_k,X\rangle|\geq \frac{A}{8C_1}.
\]
\end{proof}

\begin{rem}
Following Remark \ref{equivcov} we may restate Theorem \ref{uncpsquare} in terms of covering numbers -- for any log-concave
unconditional vector $X$, any nonempty $T\subset \er^d$ and $A>0$,
\[
\Ex\sup_{t,s\in T}\langle t-s,X\rangle =2\Ex\sup_{t\in T}\langle t,X\rangle\geq 
\frac{1}{C}\sup_{p\geq 1}\min\Big\{\frac{A}{p}\sqrt{\log N(T,d_{X,p},A)},A\Big\}.
\]
\end{rem}

\section{Invariant log-concave vectors}

In this section we investigate the class of invariant log-concave vectors. First result shows
that $p$-th moments of norms of such vectors are almost constant for $p\leq d$.

\begin{prop}
\label{compmominv}
Let $K$ be a symmetric convex body in $\er^d$ and $X$ be a random $d$-dimensional vector with the density of the form
$e^{-\varphi(\|x\|_K)}$, where $\varphi\colon [0,\infty)\rightarrow (-\infty,\infty]$ is a nondecreasing convex function.
Then
\[
(\Ex\|X\|_K^d)^{1/d}\leq C_5\Med(\|X\|_K).
\]
\end{prop}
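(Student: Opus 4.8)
The plan is to reduce the comparison of the $d$-th moment with the median to a one-dimensional statement about the radial behaviour of the density, using the polar-coordinate decomposition. Writing $r(x):=\|x\|_K$, the distribution of $r(X)$ on $[0,\infty)$ has density proportional to $\rho^{d-1}e^{-\varphi(\rho)}$ (this is the standard computation: the vector $X$ is obtained by picking a direction with the cone measure of $\partial K$ and then a radius with this density, since level sets of $\|\cdot\|_K$ are dilates of $\partial K$). So it suffices to show that if $R$ is a positive random variable with density $c\,\rho^{d-1}e^{-\varphi(\rho)}$ for a nondecreasing convex $\varphi$, then $(\Ex R^d)^{1/d}\le C_5\Med(R)$.

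First I would record that $g(\rho):=\rho^{d-1}e^{-\varphi(\rho)}$ is itself log-concave on $(0,\infty)$: indeed $\log g(\rho)=(d-1)\log\rho-\varphi(\rho)$ is a sum of a concave and a concave function. Hence $R$ is a one-dimensional log-concave random variable. For such variables all the standard regularity estimates hold: the median, the mean, and all $L_p$-norms for $p\lesssim$ any fixed range are comparable up to universal constants — but here the subtlety is that the exponent $d$ grows, so I cannot simply quote Lemma~\ref{momgrow} with $q=1$ (that would cost a factor $d$). Instead I would exploit that $R^{d-1}$ already sits inside the density. The clean way: let $m=\Med(R)$. Then $\Pr(R\ge m)=1/2$, and by log-concavity of the \emph{density} $g$, for $\rho\ge m$ one has a exponential-type upper tail; more precisely the function $\rho\mapsto \Pr(R\ge \rho)$ is log-concave, so $\Pr(R\ge m+s)\le 2^{-1}e^{-\lambda s}$ for the rate $\lambda$ determined at the median, and crucially $\lambda m\ge c\,d$ because the density has grown like $\rho^{d-1}$ up to $m$ — quantitatively, $g(m)/\int_0^m g \ge c\,d/m$ from the fact that $\int_0^m \rho^{d-1}e^{-\varphi(\rho)}d\rho \le e^{-\varphi(0^+)\vee\text{(monotonicity)}}\int_0^m\rho^{d-1}d\rho \cdot(\text{const})$ combined with $\varphi$ nondecreasing making $e^{-\varphi}$ nonincreasing. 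This gives $\Pr(R\ge tm)\le e^{-cdt}$ for $t\ge 2$, and then $\Ex R^d = m^d\int_0^\infty dt\,t^{d-1}\Pr(R\ge tm) \le m^d(2^d + \int_2^\infty d\,t^{d-1}e^{-cdt}dt)\le (C m)^d$ by a routine Laplace/Stirling estimate, yielding $(\Ex R^d)^{1/d}\le C_5 m$.

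The step I expect to be the main obstacle is the lower bound $\lambda m \gtrsim d$ — equivalently, controlling the left tail / the ratio $\int_0^m g\big/\big(m\,g(m)\big)$ from above by $C/d$, uniformly in $\varphi$. The danger is a $\varphi$ that is flat near $0$ and then jumps (e.g. $\varphi=+\infty$ outside a ball), which forces $R$ to concentrate near a value; but in all such cases concentration only \emph{helps}, and the nondecreasing-convex hypothesis on $\varphi$ is exactly what prevents the density from having mass pushed far to the right of its mode relative to its spread. I would handle it by comparing with the extremal case $\varphi\equiv \text{const}$ on its support (uniform radial density), for which $R^d$ is, up to the direction, the $d$-th power of the radius of a uniform point in a convex body and the inequality is classical, and then use that replacing a constant $\varphi$ by a genuinely increasing convex one only shifts mass inward (a standard majorization/rearrangement argument for log-concave densities: increasing the ``potential'' $\varphi$ monotonically makes $R$ stochastically smaller after rescaling by the median). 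Once the median-rate estimate is in hand, everything else is the integration-by-parts computation sketched above.
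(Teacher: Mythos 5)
Your radial reduction is exactly the paper's starting point: $\|X\|_K$ has density proportional to $\rho^{d-1}e^{-\varphi(\rho)}$, and one must show $(\Ex R^d)^{1/d}\leq C\,\Med(R)$ for this one-dimensional log-concave variable. The plan of getting an exponential tail bound and integrating by parts is also the paper's. But the key quantitative claim — that the hazard rate $\lambda$ at the median satisfies $\lambda m\gtrsim d$ — is false, and that is where the argument collapses. Take $\varphi(\rho)=\rho$, so $R\sim\mathrm{Gamma}(d,1)$. Then $m\approx d$, the density at the median is $\approx 1/\sqrt{2\pi d}$, and $\Pr(R\geq m)=1/2$, so $\lambda=g(m)/\int_m^\infty g\approx 2/\sqrt{2\pi d}$ and $\lambda m\approx\sqrt{d}$, not $d$. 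Feeding $\Pr(R\geq tm)\leq\tfrac12 e^{-\sqrt{d}(t-1)}$ into your integral gives only $(\Ex R^d)^{1/d}\lesssim\sqrt{d}\,m$, a factor $\sqrt{d}$ too big. Your sketch of a proof of $g(m)/\int_0^m g\geq cd/m$ also runs backwards: since $\varphi$ is nondecreasing, $e^{-\varphi(\rho)}\geq e^{-\varphi(m)}$ for $\rho\leq m$, hence $\int_0^m g\geq e^{-\varphi(m)}m^d/d$, which yields the \emph{upper} bound $g(m)/\int_0^m g\leq d/m$ and nothing in the other direction.

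What saves the statement — and what the paper does — is that the hazard rate, while only $\sim\sqrt{d}/m$ at $m$, grows to $\gtrsim d/m$ by a fixed multiple of $m$, and that is enough for the $d$-th moment. Concretely, since $\varphi$ is nondecreasing, $\tfrac12=\mu(mK)\leq e^{-\varphi(0)}m^d\vol(K)$ and $1\geq\mu(2emK)\geq e^{-\varphi(2em)}(2em)^d\vol(K)$; combining gives $\varphi(2em)-\varphi(0)\geq d$. Convexity then forces $\varphi$ to grow with slope at least $d/(2em)$ for all $r\geq 2em$, so the tail $\Pr(R\geq sm)$ decays like $(\mathrm{const})^d e^{-csd}$ once $s$ exceeds a universal constant, and the integration by parts closes. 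So you need the slope of $\varphi$ (equivalently, the hazard rate) past $2em$, not at $m$, and the bracketing of the normalizing constant $\vol(K)$ between $mK$ and $2emK$ is the mechanism that produces the factor $d$ — the ``compare with the uniform/extremal case'' heuristic you mention is on the right track but must be carried out at this larger radius to get the correct rate.
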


\begin{proof}
Let $\mu$ denotes the law of $X$ and $m:=\Med(\|X\|_K)$. Then
\[
\frac{1}{2}=\mu(mK)=\int_{mK}e^{-\varphi(\|x\|_K)}\de x\leq \int_{mK}e^{-\varphi(0)}\de x=e^{-\varphi(0)}m^d\vol(K)
\]
and
\begin{align*}
1&\geq \mu(2emK)=\int_{2emK}e^{-\varphi(\|x\|_K)}\de x\geq \int_{2emK}e^{-\varphi(2em)}\de x
\\
&=e^{-\varphi(2em)}(2em)^d\vol(K).
\end{align*}
Therefore 
\[
e^{\varphi(2em)-\varphi(0)}\geq \frac{1}{2}(2e)^d\geq e^d.
\]
Convexity of $\varphi$ implies that
\begin{equation}
\label{estvarphi}
\varphi(r)-\varphi(m)\geq \frac{d}{2em}(r-m) \quad \mbox{ for }r\geq 2em.
\end{equation}

Integrating in polar-type coordinates we get
\[
\frac{1}{2}=\mu(mK)=c_K\int_0^m e^{-\varphi(r)}r^{d-1}\de r\geq c_K\int_0^m e^{-\varphi(m)}r^{d-1}\de r
=\frac{c_K}{d}m^de^{-\varphi(m)},
\]
where $c_K:=d\,\vol(K)$.
Hence for $s\geq 0$,
\begin{align*}
\Pr(\|X\|_K\geq sm)
&=\mu(\er^d\setminus smK)=c_K\int_{sm}^{\infty}e^{-\varphi(r)}r^{d-1}\de r
\\
&\leq
\frac{d}{2}m^{-d}\int_{sm}^{\infty}e^{\varphi(m)-\varphi(r)}r^{d-1}\de r.
\end{align*}
Using \eqref{estvarphi} we get 
\[
\Pr(\|X\|_K\geq sm)\leq \frac{d}{2}m^{-d}e^{\frac{d}{2e}}\int_{sm}^{\infty}e^{-\frac{dr}{2em}}r^{d-1}\de r
\quad \mbox{ for }s\geq 2e.
\]

The function $r\mapsto e^{-\frac{dr}{4em}}r^{d-1}$ is decreasing for $r\geq 4\frac{d-1}{d}em$, thus
\[
e^{-\frac{dr}{2em}}r^{d-1}\leq e^{-d}(4em)^{d-1}e^{-\frac{dr}{4em}} \quad \mbox{ for }r\geq 4em.
\]
Therefore for $s\geq 4e$,
\[
\Pr(\|X\|_K\geq sm)\leq \frac{d}{2}(em)^{-d}(4em)^{d-1}e^{\frac{d}{2e}}\int_{sm}^{\infty}e^{-\frac{dr}{4em}}\de r
=\frac{1}{2}e^{\frac{d}{2e}}4^de^{-\frac{sd}{4e}}.
\]
Integrating by parts we get
\begin{align*}
\Ex\|X\|_K^d
&\leq (4em)^d+dm^d\int_{4e}^\infty s^{d-1}\Pr(\|X\|_K\geq sm)\de s
\\
&\leq
(4em)^d+\frac{d}{2}e^{\frac{d}{2e}}(4m)^d \int_{0}^\infty s^{d-1}e^{-\frac{sd}{4e}}\de s
\end{align*}
and it easily follows that $(\Ex\|X\|_K^d)^{1/d}\leq C_5m$.
\end{proof}

It turns out that the Sudakov minoration property holds with almost the same constant for all
vectors in the same class of invariant log-concave vectors.

\begin{thm}
\label{inv1}
Let $X^i$, $i=1,2$, be $d$-dimensional random vectors with densities of the form
$e^{-\varphi_i(\|x\|_K)}$, where $K$ is a symmetric convex body in $\er^d$ and
$\varphi_i\colon [0,\infty)\rightarrow (-\infty,\infty]$ are nondecreasing convex functions. 
If $X^1$ satisfies  $\mathrm{SMP}(\kappa)$ then $X^2$ satisfies $\mathrm{SMP}(\kappa/C_6)$.
\end{thm}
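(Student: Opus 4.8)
The plan is to exploit the fact that a density of the form $e^{-\varphi(\|x\|_K)}$ has a polar decomposition whose angular part is insensitive to $\varphi$. Concretely, I would let $\Theta$ be a random vector in $\er^d$ distributed according to the cone probability measure on $\partial K$, and for $i=1,2$ let $R_i\geq 0$ be independent of $\Theta$ with density on $[0,\infty)$ proportional to $r^{d-1}e^{-\varphi_i(r)}$ (this density is log-concave). Integration in polar coordinates — exactly as in the proof of Proposition \ref{compmominv} — shows that $R_i\Theta$ has the distribution of $X^i$, with the \emph{same} $\Theta$ for $i=1,2$. Independence of $R_i$ and $\Theta$ then gives, for all $t,s\in\er^d$ and $p\geq 1$,
\[
\|\langle t-s,X^i\rangle\|_p=\|R_i\|_p\,\|\langle t-s,\Theta\rangle\|_p ,
\]
and, because $R_i\geq 0$ and $\sup_{t,s\in T}\langle t-s,\Theta\rangle\geq 0$,
\[
\Ex\sup_{t,s\in T}\langle t-s,X^i\rangle=\Ex R_i\cdot\Ex\sup_{t,s\in T}\langle t-s,\Theta\rangle .
\]

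Next I would fix $p\geq 1$ and a finite $T\subset\er^d$ with $|T|\geq e^p$ such that $\|\langle t-s,X^2\rangle\|_p\geq A$ for distinct $t,s\in T$ (an infinite $T$ reduces to a finite subset of cardinality at least $e^p$). The first identity rephrases the hypothesis as $\|\langle t-s,\Theta\rangle\|_p\geq A/\|R_2\|_p$, hence $\|\langle t-s,X^1\rangle\|_p\geq\|R_1\|_p A/\|R_2\|_p$ for distinct $t,s$. Applying $\mathrm{SMP}_p(\kappa)$ to $X^1$ and then the second identity for $i=1$ and $i=2$ yields
\[
\Ex\sup_{t,s\in T}\langle t-s,X^2\rangle=\Ex R_2\cdot\Ex\sup_{t,s\in T}\langle t-s,\Theta\rangle\geq\kappa\,\frac{\|R_1\|_p}{\Ex R_1}\cdot\frac{\Ex R_2}{\|R_2\|_p}\,A .
\]
So the proof reduces to bounding the product of ratios on the right below by a universal constant. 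The first ratio is $\geq 1$ by Jensen. For the second, in the range $1\leq p\leq d$ I would use that $\|X^2\|_K$ and $R_2$ have the same law, so Proposition \ref{compmominv} together with Markov's inequality ($\Med(R_2)\leq 3\Ex R_2$) gives $\|R_2\|_p\leq(\Ex R_2^d)^{1/d}\leq C_5\Med(R_2)\leq 3C_5\Ex R_2$; this yields $\mathrm{SMP}_p(\kappa/(3C_5))$ for $X^2$ whenever $1\leq p\leq d$.

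Finally, for $p\geq d$ I would observe that since $K$ is a symmetric convex body and $\varphi_2$ is convex and nondecreasing, $x\mapsto\varphi_2(\|x\|_K)$ is convex and even, so $X^2$ is a symmetric log-concave vector; the case $p=d$ already established together with Lemma \ref{red_belowd} then gives $\mathrm{SMP}_p(\kappa/(24C_5))$ for $p\geq d$, so that $X^2$ satisfies $\mathrm{SMP}(\kappa/C_6)$ with $C_6=24C_5$. I expect the substantive step to be the polar decomposition itself, which collapses Sudakov minoration for $X^2$ to a statement about the shared angular vector $\Theta$ weighted only by the scalars $\|R_i\|_p/\Ex R_i$; the one real obstacle is that $\|R_2\|_p$ cannot be controlled by $\Ex R_2$ uniformly in $p$ (for instance $\varphi_2(r)=r$ makes $R_2$ a Gamma variable with $\|R_2\|_p/\Ex R_2\to\infty$), which is exactly why Proposition \ref{compmominv} is needed for $p\leq d$ and Lemma \ref{red_belowd} must be invoked for $p\geq d$.
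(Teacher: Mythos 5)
Your proof is correct and follows essentially the same route as the paper: reduce to $1\le p\le d$ via Lemma \ref{red_belowd}, write $X^i$ as a nonnegative radial variable $R_i$ times a common angular variable, compare the two radial parts via Proposition \ref{compmominv}, and transfer SMP from $X^1$ to $X^2$. The only (inessential) difference is that you factor through the cone measure on $\partial K$ so that $\|X^i\|_K=R_i$ exactly, whereas the paper uses $Y$ uniform on $K$ and then bridges $\|X^i\|_K$ to $R_i$ by the two extra inequalities $\|X^i\|_K\le R_i$ and $(\Ex\|Y\|_K^d)^{1/d}\ge 1/2$; your choice makes the appeal to Proposition \ref{compmominv} slightly cleaner but the argument is otherwise identical.
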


\begin{proof}
Observe that $X^i$ has the same distribution as $R_iY$, where $Y$ is uniformly distributed on $K$ and $R_i$ are
nonnegative r.v's independent of $Y$. 
We have $\|X^i\|_K\leq R_i$, in particular $\Ex R_i\geq \frac{1}{2}\Med(R_i)\geq \frac{1}{2}\Med(\|X^i\|_K)$.
Moreover 
\[
(\Ex \|X^i\|_K^d)^{1/d}=(\Ex R_i^d)^{1/d}(\Ex\|Y\|_K^d)^{1/d}\geq \frac{1}{2}(\Ex R_i^{d})^{1/d}.
\]
Therefore Proposition \ref{compmominv} implies 
\[
\Ex R_i\leq \|R_i\|_p\leq \|R_i\|_d\leq 4C_5\Ex R_i \quad \mbox{ for }1\leq p\leq d.
\]

We need to show that $X^2$ satisfies $\mathrm{SMP}_p(\kappa/C)$ for $p\geq 1$. By Lemma  \ref{red_belowd} it
is enough to consider only $p\leq d$. Since it is a matter of scaling we may assume that $\Ex R_i=1$ for $i=1,2$.
For any $T\subset \er^d$ we then have
\[
\Ex\sup_{t,s\in T}\langle t-s,X^i\rangle=\Ex R_i\Ex\sup_{t,s\in T}\langle t-s,Y\rangle
=\Ex\sup_{t,s\in T}\langle t-s,Y\rangle.
\]
Moreover for $p\geq 1$,
\[
\|\langle u,X^i\rangle\|_p=\|R_i\|_p\|\langle u,Y\rangle\|_p \quad \mbox{ for any }u\in \er^n.
\]

Fix $1\leq p\leq d$ and take $T\subset \er^d$ with $|T|\geq e^p$ such that  
$\|\langle t-s,X^2\rangle\|_p\geq A$ for all $t,s\in T$, $t\neq s$. Then 
$\|\langle t-s,X^1\rangle\|_p\geq \frac{1}{4C_5}A$ for distinct points $t,s\in T$ and $\mathrm{SMP}$ for $X^1$ yields
\[
\Ex\sup_{t,s\in T}\langle t-s,X^2\rangle=\Ex\sup_{t\in T}\langle t-s,X^1\rangle\geq \frac{\kappa}{4C_5}A.
\]
\end{proof}

As a corollary we  show that a large class of invariant log-concave vectors satisfy SMP with a
universal constant.

\begin{cor}
\label{thm_inv2}
All $d$-dimensional random vectors with densities of the form $\exp(-\varphi(\|x\|_p))$, where
$1\leq p\leq \infty$ and  $\varphi\colon [0,\infty)\rightarrow (-\infty,\infty]$ is nondecreasing and convex
satisfy Sudakov minoration principle with a universal constant. In particular all rotationally invariant log-concave
random vectors satisfy Sudakov minoration principle with a universal constant.
\end{cor}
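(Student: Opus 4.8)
The plan is to use Theorem~\ref{inv1} to collapse each ``$p$-family'' of invariant vectors to a single convenient representative, and then to quote the already-established special cases. Fix $1\le p\le\infty$ and put $K=B_p^d$, so that $\|x\|_K=\|x\|_p$. By Theorem~\ref{inv1}, as soon as I exhibit \emph{one} $d$-dimensional vector $X^1$ with a density of the form $e^{-\varphi_1(\|x\|_p)}$ ($\varphi_1$ nondecreasing and convex) that satisfies $\mathrm{SMP}(\kappa)$ with a universal $\kappa$, every vector $X^2$ with a density $e^{-\varphi(\|x\|_p)}$ as in the statement satisfies $\mathrm{SMP}(\kappa/C_6)$, which is still a universal constant. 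So the whole corollary reduces to choosing a good representative for each value of $p$.

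For $1\le p<\infty$ I would take $X^1$ with density proportional to $\exp(-\|x\|_p^p)=\exp(-\sum_{i\le d}|x_i|^p)$. Its radial profile $\varphi_1(r)=r^p$ is nondecreasing and convex on $[0,\infty)$ (here $p\ge1$ is used), so $X^1$ indeed belongs to the class; the point is that $X^1$ has i.i.d.\ coordinates, each with the symmetric log-concave density proportional to $e^{-|t|^p}$, so $\mathrm{SMP}$ with a universal constant is precisely the content of Example~\ref{indlogcon}. For $p=\infty$ I would instead take $X^1$ uniformly distributed on $B_\infty^d=[-1,1]^d$: its density equals $2^{-d}\ind_{\{\|x\|_\infty\le1\}}$, which has the required form with $\varphi_1(r)=0$ for $r\le1$ and $\varphi_1(r)=+\infty$ for $r>1$ --- a nondecreasing convex function with values in $(-\infty,\infty]$ --- and $X^1$ has i.i.d.\ coordinates uniform on $[-1,1]$, so Example~\ref{smprad} gives $\mathrm{SMP}$ with a universal constant. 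Combining the two ranges of $p$ finishes the first assertion.

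For the ``in particular'' statement, let $X$ be rotationally invariant and log-concave, so its density is $g(\|x\|_2)$ for some $g\colon[0,\infty)\to[0,\infty)$; set $\varphi:=-\log g$. Log-concavity means $x\mapsto\varphi(\|x\|_2)$ is convex on $\er^d$; restricting to a line through the origin shows $\varphi$ is convex on $[0,\infty)$, and evaluating convexity on a reflected pair $re_1,-re_1$ gives $\varphi(r)\ge\varphi(0)$ for all $r$, which forces a convex $\varphi$ to be nondecreasing. Hence $X$ is covered by the case $p=2$ treated above.

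I do not expect a genuine obstacle: all the real work sits in Theorem~\ref{inv1} and in Examples~\ref{smprad} and~\ref{indlogcon}. The only mildly delicate points are allowing the extended-real-valued profile $\varphi_1$ when $p=\infty$ (so that the uniform distribution on the cube formally fits the hypotheses) and the short verification that a rotationally invariant log-concave density always has a nondecreasing convex radial profile.
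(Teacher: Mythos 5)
Your proof is correct and follows essentially the same route as the paper: apply Theorem~\ref{inv1} with $X^1$ having density proportional to $\exp(-\|x\|_p^p)$ (and the indicator of $B_\infty^d$ when $p=\infty$), note that $X^1$ has independent coordinates so Example~\ref{indlogcon} (or, for the cube, Example~\ref{smprad}) gives $\mathrm{SMP}$ with a universal constant, and conclude. Your extra verification that a rotationally invariant log-concave density has a nondecreasing convex radial profile is a correct filling-in of a detail the paper leaves implicit.
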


\begin{proof}
We apply Theorem \ref{inv1} with $X^2=X$ and $X^1$ having the density of the form $c_p^d\exp(-\varphi_p(\|x\|_p))$,
where $\varphi_p(r)=r^p$ for $p<\infty$ and $\varphi_{\infty}=\infty \ind_{[1,\infty)}$. Note that $X^1$ is log-concave with
a product density so it satisfies SMP with a universal constant (see Example \ref{indlogcon}).
\end{proof}

\section{Applications}

In the last section we apply chaining techniques to show properties of vectors satisfying SMP. 
Before we formulate our results we state a simple general estimate for moments of suprema of stochastic processes
based on chaining.

\begin{prop}
\label{chaining}
Let $(X_t)_{t\in T}$ be a stochastic process and $(T_k)_{0\leq k\leq k_1}$ be a sequence of subsets of $T$
such that $|T_k|\leq e^{2^{k+1}}$ for 
$0\leq k\leq k_1$ and $T_{k_1}=T$. Moreover suppose that $\pi_k\colon\ T\rightarrow T_k$
for $0\leq k\leq k_1$ and $\pi_{k_1}(t)=t$ for all $t\in T$. Then for any $1\leq k_0\leq k_1-1$ and 
$2^{k_0-1}\leq p\leq 2^{k_0}$, 
\[
\Big\|\sup_{t\in T_k}|X_t|\Big\|_p\leq 
3e^3\Big(\sup_{t\in T}\sum_{k=k_0+1}^{k_1}\|X_{\pi_k(t)}-X_{\pi_{k-1}(t)}\|_{2^{k}}+\sup_{t\in T_{k_0}}\|X_t\|_p\Big).
\]
\end{prop}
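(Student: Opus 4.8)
The plan is to run a standard chaining argument over the nested sequence $(T_k)$, but with the union bound at each level carefully calibrated to the exponentially growing cardinalities $|T_k|\le e^{2^{k+1}}$. First I would write, for any $t\in T$, the telescoping identity $X_t = X_{\pi_{k_0}(t)} + \sum_{k=k_0+1}^{k_1}(X_{\pi_k(t)}-X_{\pi_{k-1}(t)})$, using $\pi_{k_1}(t)=t$. Taking the supremum over $t\in T$ and then $L_p$-norms, it suffices to control $\|\sup_{t\in T_{k_0}}|X_t|\|_p$ and, for each $k>k_0$, the quantity $\|\sup_{t\in T}|X_{\pi_k(t)}-X_{\pi_{k-1}(t)}|\|_p$. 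The first term is one of the two terms on the right-hand side (up to the constant), so the work is in the link terms.

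For the link term at level $k$, the map $t\mapsto(\pi_k(t),\pi_{k-1}(t))$ takes at most $|T_k|\cdot|T_{k-1}|\le e^{2^{k+1}}\cdot e^{2^k}\le e^{2^{k+2}}$ values, so the supremum is really a maximum over at most $e^{2^{k+2}}$ random variables. I would then invoke the elementary bound already used at the start of the paper: for a finite family of size at most $e^q$, $\|\max_j|Y_j|\|_r\le |J|^{1/r}\max_j\|Y_j\|_r \le e^{q/r}\max_j\|Y_j\|_r$. Applying this with $r=2^k$ and $q=2^{k+2}$ gives the factor $e^{2^{k+2}/2^k}=e^4$, so
\[
\Big\|\sup_{t\in T}|X_{\pi_k(t)}-X_{\pi_{k-1}(t)}|\Big\|_{2^k}\le e^4\sup_{t\in T}\|X_{\pi_k(t)}-X_{\pi_{k-1}(t)}\|_{2^k}.
\]
Since $2^k\ge 2^{k_0}\ge p$ for $k>k_0$, monotonicity of $L_r$-norms lets me replace $\|\cdot\|_p$ by $\|\cdot\|_{2^k}$ on the left before summing. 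Summing over $k=k_0+1,\dots,k_1$ and bounding each term by the supremum of the displayed sum over $t\in T$, the link contribution is at most $e^4\sup_{t\in T}\sum_{k=k_0+1}^{k_1}\|X_{\pi_k(t)}-X_{\pi_{k-1}(t)}\|_{2^k}$.

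It remains to collect constants. Using $\|\sup_t|X_t|\|_p\le\|X_{\pi_{k_0}(t)}\|_p + (\text{link part})$ after the triangle inequality in $L_p$ and taking suprema, and noting $\sup_{t\in T}\|X_{\pi_{k_0}(t)}\|_p\le \sup_{t\in T_{k_0}}\|X_t\|_p$, the total bound is of the form $\big(1+e^4\big)$ times the sum of the two terms, which is comfortably below $3e^3$; I would just verify the arithmetic so that the stated constant $3e^3$ is not violated (one can afford to be generous here, and if needed absorb the level-$k_0$ union bound, which costs a harmless $e^{2^{k_0}/p}\le e^2$ factor on $\sup_{t\in T_{k_0}}\|X_t\|_p$, into the constant). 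The only mild subtlety — and the step I would be most careful with — is matching the index ranges: ensuring $2^{k}\ge p$ for every $k\ge k_0+1$ (which follows from $p\le 2^{k_0}$) so that raising the integrability exponent to $2^k$ is legitimate, and checking that the cardinality estimate $|T_k||T_{k-1}|\le e^{2^{k+2}}$ is what produces exactly the exponent $2^k$ in the norms. There is no real obstacle; it is a bookkeeping exercise in the classical Kolmogorov/Pisier chaining scheme adapted to doubling scales.
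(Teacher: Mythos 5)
Your argument breaks at the aggregation step, and the break is precisely the point the paper's proof is designed to finesse. After your telescoping and per-level union bound, what you actually have is
\[
\Big\|\sup_{t\in T}\big|X_t-X_{\pi_{k_0}(t)}\big|\Big\|_p
\;\le\;\sum_{k=k_0+1}^{k_1}\Big\|\sup_{t\in T}\big|X_{\pi_k(t)}-X_{\pi_{k-1}(t)}\big|\Big\|_{2^k}
\;\le\;e^3\sum_{k=k_0+1}^{k_1}\ \sup_{t\in T}\ \big\|X_{\pi_k(t)}-X_{\pi_{k-1}(t)}\big\|_{2^{k}},
\]
which is a $\sum_k\sup_t$ quantity. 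The statement asks for $\sup_t\sum_k$, and the inequality between these two goes the wrong way: one always has $\sup_t\sum_k\le\sum_k\sup_t$, never the reverse without an extra factor of order $k_1-k_0$, which is unbounded. Your sentence ``Summing over $k$ and bounding each term by the supremum of the displayed sum'' indeed bounds each $\sup_t\|\Delta_k(t)\|_{2^k}$ by $\sup_t\sum_k\|\Delta_k(t)\|_{2^k}$, but there are $k_1-k_0$ such terms, so that only gives $(k_1-k_0)\cdot\sup_t\sum_k$. This is exactly the difference between a Dudley-type entropy integral and a Talagrand $\gamma_2$-type bound, and the termwise $L_p$-triangle inequality cannot see it.

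The paper sidesteps this by not moving into moments level by level. It fixes $m(k_0):=\sup_{t\in T}\sum_{k>k_0}\|X_{\pi_k(t)}-X_{\pi_{k-1}(t)}\|_{2^k}$ and observes the set-theoretic inclusion: if for every $k$ and every $t$ one has $|X_{\pi_k(t)}-X_{\pi_{k-1}(t)}|<u\|X_{\pi_k(t)}-X_{\pi_{k-1}(t)}\|_{2^k}$, then summing in $k$ for each fixed $t$ and then taking $\sup_t$ gives $\sup_t|X_t-X_{\pi_{k_0}(t)}|<u\,m(k_0)$. Hence the bad event is contained in $\bigcup_k\bigcup_{(s,s')\in T_k\times T_{k-1}}\{|X_s-X_{s'}|\ge u\|X_s-X_{s'}\|_{2^k}\}$, a single union bound that, combined with Chebyshev at exponent $2^k$, yields the tail $\sum_k(e^3/u)^{2^k}\le 2(e^3/u)^{2p}$; one then integrates this tail to get the $L_p$ bound with $m(k_0)$, i.e.\ with the $\sup_t\sum_k$ on the right. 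If you want to rescue your write-up you need to replace the chain ``$L_p$ triangle inequality $\to$ per-level union bound $\to$ sum'' by this single tail estimate; as written, your argument proves only the weaker bound with $\sum_k\sup_t$.
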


\begin{proof}
Define 
\[
m(l):=\sup_{t\in T}\sum_{k=l+1}^{k_1}\|X_{\pi_k(t)}-X_{\pi_{k-1}(t)}\|_{2^{k}}.
\]
Then for $u\geq 2e^{3}$,
\begin{align}
\notag
\Pr\Big(&\sup_{t\in T}|X_t-X_{\pi_{k_0}(t)}|\geq um(k_0)\Big)
\\
\notag
&\leq \Pr\Big(\sup_{t\in T}\sum_{k=k_0+1}^{k_1}|X_{\pi_{k}(t)}-X_{\pi_{k-1}(t)}|\geq um(k_0)\Big)
\\
\notag
&\leq \Pr\Big(\exists_{k_0+1\leq k\leq k_1}\exists_{t\in T}\
|X_{\pi_{k}(t)}-X_{\pi_{k-1}(t)}|\geq u\|X_{\pi_{k}(t)}-X_{\pi_{k-1}(t)}\|_{2^k}\Big)
\\
\notag
&\leq \sum_{k=k_0+1}^{k_1}\sum_{s\in T_{k}}\sum_{s'\in T_{k-1}}
\Pr(|X_{s}-X_{s'}|\geq u\|X_s-X_{s'}\|_{2^k})
\\
\label{smartunion}
&\leq \sum_{k=k_0+1}^{k_1}|T_{k}||T_{k-1}| u^{-2^k}
\leq \sum_{k=k_0+1}^{k_1}\Big(\frac{e^3}{u}\Big)^{2^k}
\leq  2\Big(\frac{e^3}{u}\Big)^{2^{k_0+1}}\leq 2\Big(\frac{e^3}{u}\Big)^{2p}.
\end{align}
Hence integrating by parts we get
\begin{align*}
\Ex\sup_{t\in T}|X_t-X_{\pi_{k_0}(t)}|^p
&\leq (e^{3}m(k_0))^p\Big(2^p+p\int_2^{\infty}u^{p-1}2u^{-2p}\de u\Big)
\\
&=(2e^{3})^p(1+2^{1-2p})m(k_0)^p\leq (3e^3m(k_0))^p.
\end{align*}
Moreover
\[
\Ex\sup_{t\in T}|X_{\pi_{k_0}(t)}|^p\leq \sum_{t\in T_{k_0}}\Ex|X_t|^p\leq |T_{k_0}|\sup_{t\in T_{k_0}}\|X_t\|_p^p
\leq e^{4p}\sup_{t\in T_{k_0}}\|X_t\|_p^p.
\]
Hence
\begin{align*}
\Big\|\sup_{t\in T}|X_t|\Big\|_p&\leq \Big\|\sup_{t\in T}|X_t-X_{\pi_{k_0}(t)}|\Big\|_p
+\Big\|\sup_{t\in T}|X_{\pi_{k_0}(t)}|\Big\|_p
\\
&\leq 3e^3\Big(m(k_0)+\sup_{t\in T_{k_0}}\|X_t\|_p\Big).
\end{align*}
\end{proof}

\begin{rem}
\label{remchain}
If $2^{k_0-1}\leq p\leq 2^{k_0}$, but $k_0\geq k_1$ then $p\geq 2^{k_1-1}$ and
\[
\Big\|\sup_{t\in T}|X_t|\Big\|_p\leq |T|^{1/p}\sup_{t\in T}\|X_t\|_p\leq e^4\sup_{t\in T}\|X_t\|_p.
\]
\end{rem}

Now we show that if weak moments of a random vector $X$ with SMP property dominate weak moments of another 
random vector $Y$ then strong moments of $X$ dominate strong moments of $Y$ up to a logarithmic factor.

\begin{prop}
\label{weakstrongcomp}
Suppose that a random vector $X$ in $\er^d$ satisfies $\mathrm{SMP}(\kappa)$. Let $Y$ be a random $d$-dimensional vector
such that $\|\langle t,Y\rangle\|_p\leq \|\langle t,X\rangle\|_p$ for all $p\geq 1$, $t\in \er^d$. 
Then for any norm $\|\ \|$ on $\er^d$ and $p\geq 1$, 
\begin{align}
\notag
(\Ex\|Y\|^p)^{1/p}
&\leq C\Big(\frac{1}{\kappa}\log_{+}\Big(\frac{ed}{p}\Big)\Ex\|X\|
+\sup_{\|t\|_*\leq 1}(\Ex|\langle t,Y\rangle|^p)^{1/p}\Big)
\\
\label{weakstr}
&\leq C\Big(\frac{1}{\kappa}\log_+\Big(\frac{ed}{p}\Big)+1\Big)(\Ex\|X\|^p)^{1/p}.
\end{align}
\end{prop}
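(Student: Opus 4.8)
The plan is to apply the chaining estimate of Proposition \ref{chaining} to the process $X_t = \langle t, X\rangle$ indexed by $t$ in the unit ball $B$ of the norm dual to $\|\ \|$, using the norm itself as the supremum: $\|X\| = \sup_{\|t\|_*\le 1}\langle t, X\rangle$ (after first reducing to a finite subset of this ball by a routine compactness/approximation argument, and symmetrizing if needed so that $\Ex\|X\|$ controls the relevant quantities). First I would fix $p\ge 1$, choose $k_0,k_1$ with $2^{k_0-1}\le p\le 2^{k_0}$ and $k_1$ the least integer with $e^{2^{k_1+1}}\ge$ an $\ve$-net of $B$ of the right size — concretely $2^{k_1}\sim \log(ed/p)\cdot(\text{something})$, since $B\subset\er^d$ admits nets of cardinality $e^{Cd}$ at any fixed scale and the point is to stop the chain once $2^k$ exceeds roughly $d$. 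The successive sets $T_k$ are $\ve_k$-nets of $B$ in the metric $d_{X,2^k}(s,t)=\|\langle s-t,X\rangle\|_{2^k}$ with $|T_k|\le e^{2^{k+1}}$, and $\pi_k$ the nearest-point map.

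Next I would bound each term produced by Proposition \ref{chaining}. The increment term $\sum_{k=k_0+1}^{k_1}\|X_{\pi_k(t)}-X_{\pi_{k-1}(t)}\|_{2^k}$ is controlled in the standard Dudley way: $\|X_{\pi_k(t)}-X_{\pi_{k-1}(t)}\|_{2^k}\le d_{X,2^k}(\pi_k(t),\pi_{k-1}(t))\le (\text{net radius at level }k)$, and SMP (in the covering-number form of Remark \ref{equivcov}) lets me bound every such net radius by $C\kappa^{-1}\Ex\sup_{t,s\in B}\langle t-s,X\rangle\le C\kappa^{-1}\Ex\|X\|$ (using symmetry to pass between $\Ex\sup_{t,s}\langle t-s,X\rangle$ and $\Ex\|X\|$). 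Summing over the $k_1-k_0\sim \log_+\log(ed/p)$-many levels — or more carefully, arranging the nets geometrically so the radii form a geometric series whose sum is $O(\kappa^{-1}\Ex\|X\|)$ but with the number of scales bounded by $\log_+(ed/p)$ — yields the first summand $C\kappa^{-1}\log_+(ed/p)\Ex\|X\|$. The base term $\sup_{t\in T_{k_0}}\|X_t\|_p\le \sup_{\|t\|_*\le 1}\|\langle t,X\rangle\|_p$ is exactly the weak $p$-th moment of $X$. Since $\|\langle t,Y\rangle\|_p\le\|\langle t,X\rangle\|_p$ for all $t$ and all $p$, the metrics $d_{Y,q}$ are dominated by $d_{X,q}$, so the nets built for $X$ are also nets for $Y$ and the same chaining bound applies verbatim to $X_t=\langle t,Y\rangle$ with the increment term still controlled by $\kappa^{-1}\Ex\|X\|$ (the SMP hypothesis is only needed for $X$, which supplies the net radii). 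This gives the first inequality in \eqref{weakstr}; for $p\ge 2^{k_1-1}$, i.e. $p\gtrsim d$, Remark \ref{remchain} replaces the chaining argument and gives directly $(\Ex\|Y\|^p)^{1/p}\le e^4\sup_{\|t\|_*\le1}\|\langle t,Y\rangle\|_p$, matching the claimed bound with $\log_+(ed/p)=O(1)$.

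The second inequality in \eqref{weakstr} is then almost immediate: $\Ex\|X\|\le(\Ex\|X\|^p)^{1/p}$ by Jensen, and $\sup_{\|t\|_*\le1}(\Ex|\langle t,Y\rangle|^p)^{1/p}\le\sup_{\|t\|_*\le1}(\Ex|\langle t,X\rangle|^p)^{1/p}\le(\Ex\|X\|^p)^{1/p}$, the last step because $|\langle t,X\rangle|\le\|X\|$ for $\|t\|_*\le1$. The step I expect to be the main obstacle is the careful bookkeeping in the chaining term: one must verify that the net radii at consecutive scales $2^k$ really do telescope into something of size $O(\kappa^{-1}\Ex\|X\|)$ rather than accumulating a factor per scale, and simultaneously that the number of scales entering the sum is genuinely $O(\log_+(ed/p))$ and not $O(\log d)$ — this requires choosing $k_1$ as the first level at which $2^{k}\ge Cd$ (so the $\ve$-net of radius comparable to the previous level already has $\le e^{2^{k+1}}$ points automatically, by the volumetric bound $N(B,\ve B)\le(3/\ve)^d$) and being precise about how $\max\{k_1-k_0,1\}\le C\log_+(ed/p)$. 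Everything else — the symmetrization, the approximation of the dual ball by a finite net, the passage between weak moments of $X$ and of $Y$ — is routine.
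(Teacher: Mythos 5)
Your overall architecture is the paper's: a $1/2$-net $T$ of the dual ball of size $\le 5^d$, admissible sets $T_k\subset T$ of size $\le e^{2^{k+1}}$ built from SMP, chaining via Proposition~\ref{chaining} applied to the process $X_t=\langle t,Y\rangle$ (so the $X$-nets double as $Y$-nets via the moment hypothesis), the large-$p$ case handled by Remark~\ref{remchain}, and the second inequality by Jensen plus $|\langle t,X\rangle|\le\|X\|$. Two minor deviations from the paper that still work up to constants: the paper uses the composed maps $\pi_k=\tilde\pi_k\circ\cdots\circ\tilde\pi_{k_1-1}$ rather than a nearest-point projection (this tightens constants by avoiding a triangle inequality), and no symmetrization is needed since $\Ex\sup_{t,s\in T}\langle t-s,X\rangle\le\Ex\sup_{\|u\|_*\le 2}\langle u,X\rangle=2\Ex\|X\|$ holds directly.

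However, there is a genuine confusion at exactly the point you flagged as the ``main obstacle,'' and it has to be resolved correctly or the proof does not give the stated bound. SMP does \emph{not} give nets whose radii form a geometric series; it gives, at every scale $q$, a net of cardinality $<e^q$ in the metric $d_{X,q}$ at the \emph{same} fixed radius $\tfrac{2}{\kappa}\Ex\|X\|$. (Take $S_q$ a maximal subset of $T$ that is $\tfrac{2}{\kappa}\Ex\|X\|$-separated in $d_{X,q}$; if $|S_q|\ge e^q$ then SMP forces $\Ex\sup_{t,s\in S_q}\langle t-s,X\rangle\ge 2\Ex\|X\|$, contradicting the bound above.) Consequently every increment $\|X_{\pi_k(t)}-X_{\pi_{k-1}(t)}\|_{2^k}$ is bounded by the same constant $\tfrac{2}{\kappa}\Ex\|X\|$, there is no telescoping, and the sum over scales is simply (number of scales)$\times\tfrac{2}{\kappa}\Ex\|X\|$. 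The number of scales from $k_0$ to $k_1$, with $2^{k_0}\sim p$ and $2^{k_1}\sim d$, is $k_1-k_0\sim\log_2(d/p)$, i.e.\ $O(\log_+(ed/p))$, not $O(\log\log(ed/p))$ as you first wrote, and not something that sums to $O(\kappa^{-1}\Ex\|X\|)$ via a geometric-series rearrangement as in your ``or more carefully'' alternative (that would yield a bound without the logarithmic factor, which is too strong). Put differently: the $\log_+(ed/p)$ in the statement is there precisely \emph{because} the contributions accumulate one constant per scale; your worry that they might ``accumulate a factor per scale'' is exactly what happens, and is the source of the logarithm, not an obstacle to be removed.
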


\begin{proof}
Let $T$ be a $1/2$-net in $B_{\|\ \|_*}:=\{t\in \er^d\colon\ \|t\|_*\leq 1\}$ of cardinality at most $5^d$, then
$\|x\|\leq 2\max_{t\in T}\langle t,x\rangle$ for any $x\in \er^d$. For $q\geq 1$ choose a maximal
set $S_q\subset T$ such that $\|\langle t-s,X\rangle\|_q> \frac{2}{\kappa}\Ex\|X\|$ for all distinct points $t,s\in S_q$. 
Since $X$ satisfies $\mathrm{SMP}(\kappa)$ and
\[
\Ex\max_{t,s\in S_q}\langle t-s,X\rangle\leq \Ex\sup_{\|u\|_*\leq 2}\langle u,X\rangle=2\Ex\|X\|
\]
we get that $|S_q|< e^q$. 

Let $k_1$ be the smallest integer such that $2^{k_1+1}\geq d\log 5$. Put $T_{k_1}:=T$ and $T_k:=S_{2^{k+1}}$ for
$0\leq k\leq k_1$. Let $\tilde{\pi}_k\colon T\rightarrow S_{2^{k+1}}$ be such that for any $t\in T$,
$\|\langle t-\tilde{\pi}_k(t),X\rangle\|_{2^{k+1}}\leq \frac{2}{\kappa}\Ex\|X\|$.
Define maps $\pi_k\colon T\rightarrow T_{k}$ by
$\pi_{k_1}(t)=t$ and
for $0\leq k< k_1$ by $\pi_k:=\tilde{\pi}_k\circ\tilde{\pi}_{k+1}\circ\ldots\circ\tilde{\pi}_{k_1-1}$.
Let $k_0$ be the smallest positive integer such that $2^{k_0}\geq p$.
 
If  $k_0\leq k_1-1$ we may apply
Proposition \ref{chaining} to $X_t:=\langle t,Y\rangle$. Hence
\begin{align*}
(\Ex\|Y\|^p)^{1/p}&\leq 2\Big\|\max_{t\in T}|\langle t,Y\rangle|\Big\|_p
\\
&\leq 6e^3\Big(\sup_{t\in T}\sum_{k=k_0+1}^{k_1}\|\langle \pi_k(t)-\pi_{k-1}(t),Y\rangle\|_{2^{k}}+
\sup_{t\in T_{k_0}}\|\langle t,Y\rangle \|_p\Big).
\end{align*}
To show the first inequality in \eqref{weakstr} it is enough to notice that   
$k_1-k_0\leq C\log(ed/p)$ and for any $1\leq k\leq k_1$ we have
\begin{align*}
\sup_{t\in T}\|\langle \pi_k(t)-\pi_{k-1}(t),Y\rangle\|_{2^{k}}
&\leq \sup_{t\in T}\|\langle \pi_k(t)-\pi_{k-1}(t),X\rangle\|_{2^{k}}
\\
&\leq \sup_{t\in T}\|\langle t-\tilde{\pi}_{k-1}(t),X\rangle\|_{2^{k}}\leq \frac{2}{\kappa}\Ex\|X\|.
\end{align*}

If $k_0\geq k_1$ we use Remark \ref{remchain} instead of Proposition \ref{chaining}. 

The second inequality in \eqref{weakstr} follows since
\[
\sup_{t\in T_{k_0}}\|\langle t,Y\rangle \|_p\leq \sup_{\|t\|_*\leq 1}\|\langle t,X\rangle \|_p\leq (\Ex\|X\|^p)^{1/p}.
\]
\end{proof}

The next statement states that weak and strong moments of random vectors with SMP property are comparable up to 
a logarithmical factor.

\begin{cor}
Suppose that $X$ is a $d$-dimensional random vector, which satisfies $\mathrm{SMP}(\kappa)$. Then for any norm
$\|\ \|$ on $\er^d$ and any $p\geq 1$,
\[
\big(\Ex\|X\|^p\big)^{1/p}\leq 
C\Big(\frac{1}{\kappa}\log_+\Big(\frac{ed}{p}\Big)\Ex\|X\|+\sup_{\|t\|_*\leq 1}\big(\Ex|\langle t,X\rangle|^p\big)^{1/p}\Big).
\] 
\end{cor}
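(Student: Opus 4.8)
The plan is to read off this statement as the diagonal case $Y=X$ of Proposition \ref{weakstrongcomp}. That proposition compares the strong moments of any vector $Y$ whose weak moments are dominated by those of $X$ with the weak moments of $Y$ itself (plus a logarithmic multiple of $\Ex\|X\|$); applying it with $Y:=X$, the required domination $\|\langle t,Y\rangle\|_p\le\|\langle t,X\rangle\|_p$ holds trivially with equality for every $p\ge 1$ and $t\in\er^d$, and the first inequality in \eqref{weakstr} becomes
\[
(\Ex\|X\|^p)^{1/p}\le C\Big(\frac1\kappa\log_{+}\Big(\frac{ed}{p}\Big)\Ex\|X\|+\sup_{\|t\|_*\le 1}\big(\Ex|\langle t,X\rangle|^p\big)^{1/p}\Big),
\]
which is exactly the assertion. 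So the one-line proof is simply ``apply Proposition \ref{weakstrongcomp} with $Y=X$''.

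Should one instead want a self-contained write-up, I would repeat the argument of Proposition \ref{weakstrongcomp} verbatim with $Y$ replaced by $X$: take a $1/2$-net $T$ of the dual ball $\{t:\|t\|_*\le1\}$ with $|T|\le 5^d$ so that $\|x\|\le 2\max_{t\in T}\langle t,x\rangle$; for each dyadic level $q=2^{k+1}$ extract a maximal subset $S_q\subset T$ that is $\frac2\kappa\Ex\|X\|$-separated in the distance $d_{X,q}$, which by $\mathrm{SMP}(\kappa)$ and the crude bound $\Ex\sup_{t,s\in S_q}\langle t-s,X\rangle\le 2\Ex\|X\|$ forces $|S_q|<e^q$; and then run the chaining bound of Proposition \ref{chaining} along the maps $\pi_k$ obtained by composing nearest-point projections, using that every link satisfies $\|\langle\pi_k(t)-\pi_{k-1}(t),X\rangle\|_{2^k}\le\frac2\kappa\Ex\|X\|$ and that, for $2^{k_0-1}\le p\le 2^{k_0}$ and $2^{k_1+1}$ of order $d$, the number of levels $k_1-k_0$ is $O(\log(ed/p))$. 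This is precisely the computation already carried out in the proof of Proposition \ref{weakstrongcomp}.

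I do not expect a genuine obstacle here, since the whole content is contained in Proposition \ref{weakstrongcomp} (which in turn rests on the chaining Proposition \ref{chaining} and on $\mathrm{SMP}$). The only point requiring a moment's care --- and it is already built into Proposition \ref{chaining} through Remark \ref{remchain} --- is the degenerate regime $k_0\ge k_1$, i.e.\ $p$ large compared with $d$: there no chaining is needed and one has directly $(\Ex\|X\|^p)^{1/p}\le 2\big\|\max_{t\in T}|\langle t,X\rangle|\big\|_p\le 2e^4\sup_{\|t\|_*\le1}\big(\Ex|\langle t,X\rangle|^p\big)^{1/p}$, while $\log_+(ed/p)$ contributes nothing, so the claimed inequality still holds.
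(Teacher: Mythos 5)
Your proof is correct and is exactly the paper's own proof: the corollary is obtained by applying Proposition \ref{weakstrongcomp} with $Y=X$, for which the hypothesis $\|\langle t,Y\rangle\|_p\le\|\langle t,X\rangle\|_p$ holds trivially. The extra commentary you give is a faithful recapitulation of the argument in Proposition \ref{weakstrongcomp} and adds nothing that needs checking.
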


\begin{proof}
We apply Proposition \ref{weakstrongcomp} with $Y=X$.
\end{proof}

The last result shows that for a class of invariant vectors we may eliminate logarithmic factors.

\begin{prop}
\label{weakstronginv}
Let $X$ be a $d$-dimensional random vector with the density of the form
$e^{-\varphi(\|x\|_r)}$, where $1\leq r<\infty$ and 
$\varphi\colon [0,\infty)\rightarrow (-\infty,\infty]$ is nondecreasing and convex. 
Let $Y$ be a random vector
in $\er^d$ such that $\|\langle t,Y\rangle\|_p\leq \|\langle t,X\rangle\|_p$ for $p\geq 1$. Then
for any norm $\|\ \|$ on $\er^d$ and $p\geq 1$, 
\[
(\Ex\|Y\|^p)^{1/p}\leq C(r)\Ex\|X\|+C\sup_{\|t\|_*\leq 1}\big(\Ex|\langle t,Y\rangle|^p\big)^{1/p}
\leq C'(r)(\Ex\|X\|^p)^{1/p},
\]
where $C(r)$ and $C'(r)$ depend only on $r$.
In particular for any norm $\|\ \|$ on $\er^d$ and any $p\geq 1$,
\[
\big(\Ex\|X\|^p\big)^{1/p}\leq 
C(r)\Ex\|X\|+C\sup_{\|t\|_*\leq 1}\big(\Ex|\langle t,X\rangle|^p\big)^{1/p}.
\] 
\end{prop}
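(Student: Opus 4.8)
The plan is to push the $\ell_r$-ball structure through the chaining argument behind Proposition \ref{weakstrongcomp} in order to get rid of the factor $\log_+(ed/p)$ that it produces. Since the density is a function of $\|x\|_r$ the vector $X$ is symmetric, and by Corollary \ref{thm_inv2} it satisfies $\mathrm{SMP}(\kappa_0)$ with a universal $\kappa_0$. The second displayed inequality follows from the first together with the trivial bounds $\Ex\|X\|\le(\Ex\|X\|^p)^{1/p}$ and $\sup_{\|t\|_*\le1}\|\langle t,Y\rangle\|_p\le\sup_{\|t\|_*\le1}\|\langle t,X\rangle\|_p\le(\Ex\|X\|^p)^{1/p}$, so only the first one has to be proved. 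For $p\ge d$ (and, more generally, whenever $ed/p$ is bounded by a constant depending on $r$, in particular whenever $d$ itself is so bounded) it is immediate from Proposition \ref{weakstrongcomp} applied with $\kappa=\kappa_0$, since then $\log_+(ed/p)\le C(r)$. Hence the whole problem is the range $1\le p<d$ with $d$ large, where the factor $\log(ed/p)$ must be eliminated.

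First I would reduce to a product vector. Write $X\stackrel{d}{=}RU$, with $R\ge0$ log-concave and independent of $U$, the latter uniform on $B_r^d$; as in the proof of Theorem \ref{inv1}, Proposition \ref{compmominv} yields $\Ex R\le\|R\|_q\le C(r)\Ex R$ for $1\le q\le2d$. Next I would use the classical probabilistic description of the uniform measure on $B_r^d$: if $g$ has independent coordinates with density proportional to $e^{-|x|^r}$, then $U$ has the law of $B^{1/d}g/\|g\|_r$ with $B$ uniform on $[0,1]$ independent of $g$; since $\|g\|_r$ concentrates sharply around a deterministic value, this makes $\|\langle t,U\rangle\|_q$ and $\Ex\sup_{t\in S}\langle t,U\rangle$ comparable, up to constants depending only on $r$, to scaled copies of $\|\langle t,g\rangle\|_q$ and $\Ex\sup_{t\in S}\langle t,g\rangle$, for $1\le q\le2d$ and every finite $S$. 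Combining this with the bound on $\|R\|_q$, rescaling $g$ appropriately, normalizing $\Ex R=1$, and applying $\|x\|\le2\max_{t\in S}\langle t,x\rangle$ for a $1/2$-net $S$ of $B_{\|\cdot\|_*}$, we may assume $\|\langle t,Y\rangle\|_q\le\|\langle t,g\rangle\|_q$ for $1\le q\le2d$ while $\Ex\|g\|\le C(r)\Ex\|X\|$. It then suffices to show, for $1\le p<d$,
\[
(\Ex\|Y\|^p)^{1/p}\le C(r)\Ex\|g\|+C\sup_{\|t\|_*\le1}\|\langle t,Y\rangle\|_p .
\]

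For this I would run Proposition \ref{chaining} along a chain tailored to $g$ rather than the ``Sudakov'' nets of Proposition \ref{weakstrongcomp}. Fix a $1/2$-net $T$ of $B_{\|\cdot\|_*}$ with $|T|\le5^d$, so $\|x\|\le2\max_{t\in T}\langle t,x\rangle$. The point is that the ``Sudakov'' chain only supplies the crude per-scale bound $\frac{2}{\kappa_0}\Ex\|X\|$ on the increments, hence $\log(ed/p)$ scales' worth of them, whereas for the canonical process $(\langle t,g\rangle)_{t\in T}$ the chaining bound is two-sided: by the Gluskin-Kwapie\'n moment estimates \cite{GK}, $d_{g,q}(s,t)=\|\langle s-t,g\rangle\|_q$ is, up to constants depending only on $r$, comparable to $\sqrt q\,|s-t|+q^{1/r}\|s-t\|_{[q]}$ (with $\|v\|_{[q]}$ the $\ell_{r'}$-norm of the $q$ largest coordinates of $|v|$, $1/r+1/r'=1$), and for such $\ell_r$-type canonical processes the majorizing-measure-type bounds of Talagrand (\cite{Ta2}, \cite{TaBook}) provide an admissible sequence $(T_k)_{0\le k\le k_1}$ with $T_{k_1}=T$ (so $2^{k_1}\asymp d$) and maps $\pi_k\colon T\to T_k$, $\pi_{k_1}=\mathrm{id}$, satisfying
\[
\sup_{t\in T}\sum_{k=1}^{k_1}\|\langle\pi_k(t)-\pi_{k-1}(t),g\rangle\|_{2^k}\le C(r)\,\Ex\sup_{t\in T}\langle t,g\rangle\le C(r)\,\Ex\|g\| .
\]
Feeding this sequence into Proposition \ref{chaining} with $2^{k_0-1}\le p\le2^{k_0}$ (and Remark \ref{remchain} if $k_0\ge k_1$), and using $\|\langle v,Y\rangle\|_q\le\|\langle v,g\rangle\|_q$ for all the moments $q\le2d$ occurring, one gets
\[
(\Ex\|Y\|^p)^{1/p}\le2\Big\|\sup_{t\in T}|\langle t,Y\rangle|\Big\|_p\le C\Big(C(r)\Ex\|g\|+\sup_{t\in T_{k_0}}\|\langle t,Y\rangle\|_p\Big)\le C\Big(C(r)\Ex\|g\|+\sup_{\|t\|_*\le1}\|\langle t,Y\rangle\|_p\Big),
\]
which is the required estimate. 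Unwinding the reduction (and using $\Ex\|g\|\le C(r)\Ex\|X\|$) gives the first inequality of the proposition, the second was noted above, and the ``in particular'' assertion is the case $Y=X$.

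I expect the crux to be precisely the two-sided, majorizing-measure-type chaining bound for $\ell_r$-type canonical processes used above: it is what allows the crude per-scale estimate $\frac{2}{\kappa_0}\Ex\|X\|$ of Proposition \ref{weakstrongcomp} to be replaced by a summable chain, so that no logarithmic factor survives. The surrounding reduction — the $R$-decomposition via Proposition \ref{compmominv} and the passage from $U$ uniform on $B_r^d$ to the i.i.d.\ vector $g$ — is standard, and it is this last passage, which degenerates at $r=\infty$, that is responsible for the hypothesis $r<\infty$.
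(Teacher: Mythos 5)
Your proposal is correct and follows essentially the same strategy as the paper: reduce to the product vector $g$ (the paper's $\tilde X$, with density $c^d\exp(-\|x\|_r^r)$), invoke Talagrand's \cite{Ta2} two-sided chaining bound for that canonical process to get an admissible sequence whose increment sum is $\le C(r)\Ex\|g\|$, and feed it into Proposition~\ref{chaining}. The one place you take a small detour is the reduction step: the paper writes both $X=RZ$ and $\tilde X=\tilde R Z$ with the same $Z$ uniform on $B_r^d$ and compares the scalar factors $R,\tilde R$ directly via Proposition~\ref{compmominv}, which sidesteps the $U\stackrel{d}{=}B^{1/d}g/\|g\|_r$ representation and the concentration-of-$\|g\|_r$ argument you appeal to.
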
 

\begin{proof}
Let $\tilde{X}$ have the density of the form $c_p^d\exp(-\|x\|_p^p)$. Then $\tilde{X}$ has independent
coordinates. We have $X=RZ$ and $\tilde{X}=\tilde{R}Z$, where $Z$ is uniformly distributed on $B_p^d$ and
$R,\tilde{R}$ are nonnegative random variables independent of $Z$. Since it is only a matter of normalization
we may assume that $\Ex R=\Ex\tilde{R}$. Then $\Ex\|X\|=\Ex R\Ex\|Z\|=\Ex\tilde{R}\Ex\|Z\|=\Ex\|\tilde{X}\|$.
Moreover, Proposition \ref{compmominv} easily implies (see the proof of Theorem \ref{inv1}) that for
$1\leq p\leq d$, $\|R\|_p\leq \|R\|_d\leq 4C_5\|R\|_1=4C_5\|\tilde{R}\|_1\leq 4C_5\|\tilde{R}\|_p$. Thus for
any $t\in \er^d$ and $1\leq p\leq d$,
\[
\|\langle t,X\rangle\|_p=\|R\|_p\|\langle t,Z\rangle\|_p\leq 4C_5\|\tilde{R}\|_p\|\langle t,Z\rangle\|_p
=4C_5\|\langle t,\tilde{X}\rangle\|_p.
\]
For $t\in \er^d$ and $d\leq p\leq d\log 5$ we get by Lemma \ref{momgrow}
\[
\|\langle t,X\rangle\|_p\leq \log 5\|\langle t,X\rangle\|_d\leq 2C_5\log 5\|\langle t,\tilde{X}\rangle\|_d
\leq 2C_5\log 5\|\langle t,\tilde{X}\rangle\|_p.
\]
Therefore we have
\begin{equation}
\label{compmomY}
\|\langle t,Y\rangle\|_p\leq \|\langle t,X\rangle\|_p\leq 2C_5\log 5\|\langle t,\tilde{X}\rangle\|_p
\quad\mbox{for }1\leq p\leq d\log 5,\ t\in\er^d.
\end{equation}

Let $T$ be a $1/2$-net in $B_{\|\ \|_*}$ of cardinality at most $5^d$. Let $k_1$ be the smallest integer such that 
$e^{2^{k_1+1}}\geq 5^d$. 
By the result of Talagrand \cite{Ta2} we may find sets
$T_k\subset T$, $0\leq k\leq k_1$ and maps $\pi_k\colon T\mapsto T_k$ such that $T_{k_1}=T$, 
$|T_k|\leq e^{2^{k+1}}$, $\pi_{k_1}(t)=t$ for $t\in T$ and
\[
\sum_{k=1}^{k_1} \|\langle \pi_{k}(t)-\pi_{k-1}(t),\tilde{X}\rangle\|_{2^k}\leq 
\frac{1}{2}C(r)\Ex\sup_{t\in T}\langle t,\tilde{X}\rangle
\leq C(r)\Ex\|\tilde{X}\|=C(r)\Ex\|X\|.
\]

We may now proceed as in the proof of Proposition \ref{weakstrongcomp} observing that by \eqref{compmomY} we have
$\|\langle \pi_{k}(t)-\pi_{k-1}(t),Y\rangle\|_{2^k}\leq 2C_5\log 5\|\langle \pi_{k}(t)-\pi_{k-1}(t),\tilde{X}\rangle\|_{2^k}$
for $0\leq k\leq k_1$.
\end{proof}

\begin{rem}
Using the two-sided bound for the expected value of suprema of Bernoulli processes \cite{BL} 
one may show that Proposition \ref{weakstronginv} is also satisfied in the case $r=\infty$.
\end{rem}

\noindent
Institute of Mathematics\\
University of Warsaw\\
Banacha 2\\
02-097 Warszawa\\
Poland\\
\texttt{rlatala@mimuw.edu.pl}

\end{document}